\documentclass[12pt]{amsart}

\usepackage[top=1in, bottom=1in, left=1in, right=1in]{geometry}
\usepackage{times}
\usepackage{amssymb,amsmath}
\usepackage{amsthm}
\usepackage{enumitem}

\makeatletter
\renewcommand\subsection{\@startsection{subsection}{2}%
	\z@{1\linespacing\@plus1\linespacing}{\linespacing}%
	{\normalfont\bfseries\centering}}
\makeatother

\setlist[1]{itemsep=0.5em, topsep=0.5em}

\usepackage[dvipsnames]{xcolor}
\usepackage[colorlinks=true, linkcolor=Cerulean, citecolor=ForestGreen, urlcolor=Plum]{hyperref}   %% linkable cross-refs

\usepackage{cleveref}
\usepackage{bm}
\renewcommand{\le}{\leqslant}
\renewcommand{\leq}{\leqslant}
\renewcommand{\ge}{\geqslant}
\renewcommand{\geq}{\geqslant}

\numberwithin{equation}{section}

\theoremstyle{plain}
\newtheorem{thm}{Theorem}[section]
\newtheorem*{thm*}{Theorem}
\newtheorem{cor}[thm]{Corollary}
\newtheorem{lem}[thm]{Lemma}
\newtheorem{prop}[thm]{Proposition}

\theoremstyle{definition}
\newtheorem{rem}[thm]{Remark}
\newtheorem*{rem*}{Remark}

\renewcommand{\l}{\ell}
\newcommand{\N}{\mathbb{N}}
\newcommand{\Z}{\mathbb{Z}}
\newcommand{\Q}{\mathbb{Q}}
\newcommand{\R}{\mathbb{R}}

\newcommand{\E}{\mathbb{E}}
\renewcommand{\P}{\mathbb{P}}

\newcommand{\CB}{\mathcal{B}}
\newcommand{\CC}{\mathcal{C}}

\newcommand{\CH}{\mathcal{H}}

\newcommand{\CM}{\mathcal{M}}
\newcommand{\CN}{\mathcal{N}}

\newcommand{\CS}{\mathcal{S}}

\newcommand{\CV}{\mathcal{V}}

\newcommand{\supp}{\operatorname{supp}}

\renewcommand{\phi}{\varphi}

\renewcommand{\Re}{\mathrm{Re}}
\renewcommand{\Im}{\mathrm{Im}}

\usepackage{mathrsfs}

\begin{document}
	\title[random multiplicative functions with periodic weights]{Random multiplicative functions with periodic weights}
	\author[J. Schlitt]{Jeremy Schlitt}
	\address{D\'epartement de math\'ematiques et de statistique\\
		Universit\'e de Montr\'eal\\
		CP 6128 succ. Centre-Ville\\
		Montr\'eal, QC H3C 3J7\\
		Canada}
	\email{jeremy.schlitt@umontreal.ca}
	\date{\today}
	\begin{abstract}
		Given a Steinhaus random multiplicative function $f$, a $1$-periodic function of bounded variation $g$, and an irrational number $\alpha$, we study the distribution of $\sum_{n=1}^N f(n) g(\alpha n)$. We determine a necessary and sufficient condition for these sums, normalized by their standard deviation, to converge to the standard complex Gaussian distribution. On the other hand, we show that if we restrict the summands to have all their prime factors  $>z$, with $z$ tending to infinity arbitrarily slowly, then a central limit theorem always holds for such sums.
	\end{abstract}
	\maketitle
	\section{Introduction}
	The ensemble of Steinhaus random multiplicative functions is defined as follows: Let $\{f(p)\}_{p \text{ prime}}$ be a sequence of independent random variables which are identically uniformly distributed on the complex unit circle. We then extend the definition of $f$ to all natural numbers by complete multiplicativity:
	$$f(n) := \prod_{p^k \parallel n}f(p)^k.$$
	The sequence $\{f(n)\}_{n \in \N}$ is a fascinating example of a sequence of \textit{non-independent} random variables, since there are relations between the random variables generated by the factorization of integers into prime numbers. Random multiplicative functions, first introduced by Wintner~\cite{wintner} in 1944, have seen a resurgence since the breakthrough work of Harper \cite{harper2020moments}. One of Harper's main results is that
	\begin{equation}\label{eq:harper better sqrt}
		\E\bigg|\frac{1}{\sqrt{N}}\sum_{n \leq N} f(n)\bigg|  = o(1),
	\end{equation}
	which implies that Steinhaus random multiplicative functions \textit{do not} obey a central limit theorem. In light of this, one may ask if such a limit theorem holds for an appropriately twisted version of the sum \eqref{eq:harper better sqrt}. In this direction, Soundararajan and Xu \cite{sound-xu-CLT} recently gave a rather general criterion for which weights $a_n$ make the appropriately normalized sum of random variables
	$$\sum_{n \leq N}a_nf(n)$$
	converge in distribution to a standard complex Gaussian random variable. See \Cref{sec:MDS} below for a precise statement of their general result. A particular application of their work is the following:
	\begin{thm}[{Soundararajan \& Xu, \cite[Corollary 1.6]{sound-xu-CLT}}] \label{thm: sound-xu 1.6}
		Let $\theta$ denote an irrational number such that for some positive constant $C = C(\theta)$ we have
		\begin{equation*}
			\|q\theta\|:= \min_{n \in \Z}|q \theta -n |\geq C\exp(-|q|^{1/50})\quad \forall q \in \Z \setminus \{0\}.
		\end{equation*}
		Then as $N\to \infty$,
		$$\frac{1}{\sqrt{N}}\sum_{n \leq N}e(n \theta)f(n)$$
		converges in distribution to a standard complex normal random variable with mean $0$ and variance $1$.
	\end{thm}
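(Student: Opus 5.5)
The plan is to deduce the statement from the general martingale criterion of Soundararajan and Xu (recalled in \Cref{sec:MDS}), specialized to $a_n = e(n\theta)$. Order the primes and write $S_N=\sum_{n\le N}a_nf(n)=\sum_{p\le N}Z_p$, where
\[
Z_p = \sum_{\substack{n\le N\\ P(n)=p}} a_n f(n)
\]
collects the terms with largest prime factor $p$. Then $(Z_p)$ is a martingale difference sequence with respect to the filtration generated by $\{f(q)\}_{q\le p}$. The Steinhaus property gives $\E Z_pZ_{p'}=0$, and also $\E|S_N|^2=\sum_{n\le N}|a_n|^2=N$. By the martingale CLT it therefore suffices to check two things. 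The first is a Lindeberg/Lyapunov condition, which follows from $\sum_p \E|Z_p|^4=o(N^2)$. The second, the key condition, is that the conditional variance $\frac1N\sum_p \E\bigl[|Z_p|^2 \,\big|\, \mathcal F_{p^-}\bigr]$ converges to $1$ in probability. I would establish this by showing that its second moment tends to $1$.

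The fourth-moment condition is essentially weight-independent: since $|a_n|=1$, the bound on $\sum_p\E|Z_p|^4$ reduces to counting solutions of $m_1m_2=m_3m_4$ with shared largest prime factor. This is done exactly as in the unweighted case and is $o(N^2)$. So all the arithmetic of $\theta$ enters through the conditional variance. Expanding it, one obtains
\[
\frac1N\sum_{p}\ \sum_{\substack{m,m'\le N/p \\ P(mm')\le p}} e\bigl(p(m-m')\theta\bigr)\,f(m)\overline{f(m')}
\]
(with the obvious modification for higher powers of $p$). The diagonal $m=m'$ contributes $1+o(1)$. The task is to show that the second moment of the off-diagonal part is $o(1)$. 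Computing this second moment produces sums over quadruples $m_1m_3=m_2m_4$, and the inner sums over the primes $p$, $q$ carry phases $e\bigl(p(m_1-m_2)\theta\bigr)\,e\bigl(-q(m_3-m_4)\theta\bigr)$.

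The main obstacle is bounding the exponential sums over primes
\[
\sum_{p\le x} e(hp\theta), \qquad 0<|h|\le N,
\]
with savings uniform enough in $h$. Here I would use Vinogradov's bound (in Vaughan's form):
\[
\sum_{p\le x}e(p\beta)\ll \Bigl(\frac{x}{\sqrt q}+x^{4/5}+\sqrt{xq}\Bigr)\log^4 x
\]
whenever $|\beta-a/q|\le 1/q^2$. For $\beta=h\theta$ one needs a rational approximation with $q$ neither too small nor close to $x$. The Diophantine hypothesis $\|q\theta\|\ge C\exp(-|q|^{1/50})$ guarantees, via continued fractions, that the denominators of the convergents of $\theta$ grow at most like $\exp(q^{1/50})$. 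For every $h$ up to a small power of $x$, Dirichlet's theorem applied to $h\theta$ then yields a $q$ between $(\log x)^{A}$ and $x/(\log x)^A$. This gives savings of a power of $\log$, which beats the trivial losses.

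For larger $|h|$, where this mechanism fails, I would instead exploit that these terms come from large $m_i$, hence small $p$. That range can be handled either trivially, through the sparsity of smooth numbers (Rankin's trick), or by the large-sieve/mean-value estimate for $\sum_h|\sum_p e(hp\theta)|^2$. I expect the bookkeeping of the ranges of $p$ versus the size of $m_1-m_2$ to be the delicate step. In particular, the exponent $1/50$ should emerge precisely from balancing the log-power savings against the number of ranges.
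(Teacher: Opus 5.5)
\textbf{There is a genuine gap in the one step that carries arithmetic content.}

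The paper does not prove this statement; it quotes it from Soundararajan--Xu. Inside the paper it is recovered, with $1/127$ in place of $1/50$, as the case $g(t)=e(t)$ of the `if' direction of \Cref{thm:A}. Your expansion (square of the martingale difference, phases $e(p(m-m')\theta)$, prime exponential sums) is exactly the mechanism of \Cref{thm: bounded moments} at $k=2$: its Case 2 is precisely the off-diagonal estimate you need. Your architecture is sound. The fourth-moment count is weight-independent (\Cref{lem: verif cond 3}), the diagonal gives $1$, and the second moment of the off-diagonal variation is (a variant of) condition (2) of \Cref{thm:sound-xu 3.1}.

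\textbf{The false claim.} You assert that for every $|h|$ up to a small power of $x$, Dirichlet's theorem gives $h\theta$ a denominator $q\in[(\log x)^A,x/(\log x)^A]$. If $q<(\log x)^A$ then $\|qh\theta\|\le (\log x)^A/x$. The hypothesis $\|qh\theta\|\ge C\exp(-|qh|^{1/50})$ excludes this only when $|qh|\lesssim(\log x)^{50}$, i.e.\ for $|h|\lesssim(\log x)^{50-A}$. Concretely, the hypothesis permits a convergent denominator $q_k$ with $q_{k+1}\approx\exp(q_k^{1/50}/2)$. Taking $h=q_k$ and $x=q_{k+1}^{1/2}$ gives $h\asymp(\log x)^{50}$ and $\|h\theta\|\le x^{-2}$, so $\sum_{p\le x}e(hp\theta)=\pi(x)+O(1)$: there is no cancellation at all.

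\textbf{The fallback does not cover the remaining range.} $|m_1-m_2|>(\log N)^{50}$ only forces $\max(m_1,m_2)>(\log N)^{50}$. Smoothness of $m_1,m_2$ gives a power-of-$\log$ saving only once $\max(m_1,m_2)=N^{1-o(1)}$. So the intermediate range, which carries essentially all the mass of the off-diagonal sum, is untreated. The mean-value alternative is not set up either: $h=m_1-m_2$ does not run over an interval with uniform weight, but with arithmetic multiplicities, and the prime range $[P^+(m_1m_2),N/\max(m_1,m_2)]$ depends on $(m_1,m_2)$.

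\textbf{The missing idea is that bad frequencies are sparse, not absent.}
\begin{enumerate}
\item Fix all variables except $m_1\le W$, and call $m_1$ bad if $\|q(m_1-m_2)\theta\|\le\eta=W(\log N)^\gamma/N$ for some $0<|q|\le(\log N)^\gamma$. For good $m_1$, Dirichlet's theorem plus Vinogradov or Chen saves $(\log N)^{\gamma/2-O(1)}$.
\item If $m_1\ne m_1'$ are both bad, with multipliers $q,q'$, then $\|qq'(m_1-m_1')\theta\|\le 2(\log N)^\gamma\eta$. Applying the Diophantine hypothesis to this \emph{difference} forces $|m_1-m_1'|\gg(\log(N/W))^{50}(\log N)^{-2\gamma}$.
\item Hence bad values are well spaced; the same hypothesis also bounds them below. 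Their divisor-weighted harmonic sum is then small by Cauchy--Schwarz against $\sum\tau_k^2$, so they can be bounded trivially.
\item The range $\max(m_1,m_2)>W$ is handled by Rankin's trick.
\end{enumerate}
This is \Cref{lem: badness spacing} and \Cref{cor: log weights}. They are used exactly this way in \Cref{thm: bounded moments}, and, after merging $g$ and $p$ into one integer variable via \Cref{lem:parameterization}, in \Cref{lem:reduction cond 2}.

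\textbf{The exponent does not simply ``emerge.''} Take the cutoff $W=Ne^{-\sqrt{\log N}}$ and the losses incurred in \Cref{thm: bounded moments}. At $k=2$ the good set requires $\gamma>21/2$, while the bad set requires $\gamma<50/4-9/2=8$; these are incompatible at exponent $1/50$. Reaching the exponent $1/50$ along your route therefore needs tighter bookkeeping than either you or the paper supply.
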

	It is a guiding heuristic in analytic number theory that multiplicative and additive structures are incommensurate. \Cref{thm: sound-xu 1.6} indicates that introducing additive character weights into a sum of random multiplicative functions effectively ``kills'' the multiplicative dependencies, making the sum behave like a sum of independent random variables and thus yielding a central limit theorem. 
	
	Since complex exponentials form an orthonormal basis for $L^2(\R/\Z),$ it is natural to ask whether a central limit theorem holds for sums of random multiplicative functions weighted by $1$-periodic functions. In the present paper, we give a complete characterization of the periodic weights $g(\theta n)$ that lead to a central limit theorem, under some minor assumptions on $g$. 
	
	Before stating our results, some preliminary setup is required. Throughout the paper, $\theta$ is a fixed irrational number, and we assume there exists a constant $C = C(\theta)$ such that
	\begin{equation}\label{eq:sound-xu badness approx}
		\|q\theta\|:= \min_{n \in \Z}|q \theta -n |\geq C\exp(-|q|^{1/127}) \quad \forall q \in \Z \setminus \{0\}.
	\end{equation}
	\begin{rem}
		The condition \eqref{eq:sound-xu badness approx} is extremely nonrestrictive. The set of $\theta \in [0,1]$ satisfying this condition has Lebesgue measure $1$. In fact, one can even show that the exceptions form a set of Hausdorff dimension $0$. For these reasons, little attempt was made to improve the diophantine exponent $1/127$ to something larger. 
	\end{rem}
	
	All implicit constants may depend on $\theta$ even if not indicated. We denote the family of all functions of bounded variation by $\CB\CV[\R/\Z]$, and the mean-zero functions of bounded variation by $\CB\CV_0[\R/\Z]$. See the notation section at the end of the introduction for precise definitions of these families.
	
	For a given $g \in \CB\CV[\R/\Z]$, consider the random variable
	\begin{equation}\label{eq:def S_N}
		S_{g}(N) := \frac{1}{\sqrt{V_{g}(N)}}\sum_{n \leq N} f(n) g\big(\theta n\big),
	\end{equation}
	where $f$ is sampled from the ensemble of Steinhaus random multiplicative functions, and $V_{g}(N)$ is chosen so that $\E[|S_g(N)|^2] = 1$. The variance $V_g$ can be computed explicitly by using orthogonality relations for Steinhaus random multiplicative functions:
	$$V_{g}(N) := \E\Big[\Big|\sum_{n \leq N} f(n)g(\theta n)\Big|^2\Big] = \sum_{n \leq N}\Big|g\big(\theta n\big)\Big|^2.$$
	
	It is a recurring phenomenon in the study of random multiplicative functions that the limiting distribution is heavily dictated by the behavior of the fourth moment. This principle has been established across several distinct regimes, including short intervals (Chatterjee and Soundararajan \cite{chatterjee2012random}), integers with restricted prime factors (Harper \cite{harper2013limit}), polynomial evaluations (Klurman, Shkredov, and Xu \cite{klurman2023random}, and Chinis and Shala \cite{chinis2025random}). The aforementioned work of Soundararajan and Xu demonstrates that negligible `off-diagonal' fourth-moment contributions are sufficient for $S_g(N)$ to have a Gaussian limiting distribution. In this paper, we prove that for periodic weights, this condition is in fact necessary. 
	
	We are now ready to state our first theorem, which gives a precise characterization of when $S_g(N)$ converges in distribution to a complex normal random variable.
	\begin{thm}\label{thm:A}
		Fix $\theta$ to be an irrational number satisfying \eqref{eq:sound-xu badness approx}. Let $g \in \CB\CV[\R/\Z]$ be a non-constant function. Then the following two statements are equivalent.
		\begin{enumerate}
			\item  $\{S_{g}(N)\}_{N \geq 1}$ converges in distribution to a complex normal distribution with mean $0$ and variance $1-|\hat{g}(0)|^2/\|g\|_2^2$ as $N \to \infty$
			\item For all positive integers $a,b$, such that $a \ne b$ and $(a,b) = 1$, one has $$\int g(at)\overline{g(bt)}dt = \bigg|\int_0^1 g(t)dt \bigg|^2.$$
		\end{enumerate}
	\end{thm}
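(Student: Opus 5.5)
Write $g = \hat g(0) + g_0$ with $g_0 \in \CB\CV_0[\R/\Z]$, so that
$$\sum_{n\le N} f(n) g(\theta n) = \hat g(0)\sum_{n\le N} f(n) + \sum_{n\le N} f(n) g_0(\theta n).$$
By equidistribution of $\theta n$ (Weyl, using bounded variation), $V_g(N) \sim N\|g\|_2^2$ and $V_{g_0}(N)\sim N\|g_0\|_2^2 = N(\|g\|_2^2-|\hat g(0)|^2)$. By Harper's bound \eqref{eq:harper better sqrt}, the first piece divided by $\sqrt N$ tends to $0$ in $L^1$, hence in probability. By Slutsky, statement (1) is therefore equivalent to $S_{g_0}(N)$ converging to a standard complex Gaussian, and the variance $1-|\hat g(0)|^2/\|g\|_2^2$ arises exactly from this rescaling. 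Condition (2) for $g$ is equivalent to $\int g_0(at)\overline{g_0(bt)}\,dt=0$ for coprime $a\ne b$, since the cross terms with the constant vanish because $g_0$ has mean zero. So both directions reduce to the mean-zero case.

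The key quantity is the off-diagonal fourth moment. Expanding,
$$\E\Big|\sum_{n\le N} f(n)g_0(\theta n)\Big|^4=\sum_{\substack{m_1m_2=n_1n_2\\ \le N}} g_0(\theta m_1)g_0(\theta m_2)\overline{g_0(\theta n_1)g_0(\theta n_2)}.$$
The diagonal solutions $\{m_1,m_2\}=\{n_1,n_2\}$ contribute $2V^2+O(N)$. I parametrize the remaining solutions as $m_1=da r$, $n_1=db r'$, and so on, in the standard form $m_1=ac$, $m_2=bd$, $n_1=ad$, $n_2=bc$.

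\emph{Sufficiency.} I verify the Soundararajan--Xu criterion (the martingale/MDS theorem of \Cref{sec:MDS}), whose key hypothesis is that the off-diagonal fourth-moment sum is $o(N^2)$. Writing $a$, $b$, $c$, $d$ as above, I fix the small variables and sum over the large ones. The inner sums take the shape $\sum_c g_0(\theta ac)\overline{g_0(\theta bc)}$, which by equidistribution of $(ac\theta, bc\theta)$ tend to $\int g_0(at)\overline{g_0(bt)}\,dt$ times the length. I handle the effective discrepancy by approximating $g_0$ by trigonometric polynomials (Fejér, with an error controlled by the variation) and bounding exponential sums using \eqref{eq:sound-xu badness approx}. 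Under (2) the main terms vanish after reducing $a/b$ to lowest terms, since $g_0(a\,\cdot)$ and $g_0(b\,\cdot)$ with $\gcd(a,b)=h$ is a rescaling by $h$. Hence the off-diagonal contribution is $o(N^2)$, and the remaining hypotheses (a conditional-variance condition via the same fourth moment) follow likewise.

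\emph{Necessity.} Suppose some coprime $a\ne b$ has $I:=\int g_0(at)\overline{g_0(bt)}\,dt\ne0$. A Gaussian limit together with uniform integrability would force the fourth moment to tend to $2$. I need a bounded sixth moment (or a hypercontractive bound) for $S_{g_0}$ to get uniform integrability of $|S|^4$; weighted Steinhaus sums with bounded weights have moments controlled by Bonami-type inequalities. I then show the fourth moment has $\liminf>2$. The off-diagonal main term is a positive combination of the quantities $\big|\sum_{a,b} \cdots I_{a,b}\big|^2$, effectively $\sum_{a,b}|I_{a,b}|^2/(ab)$-weighted. This positivity comes from writing the off-diagonal sum as $\sum_{k}\big|\sum_{mn=k,\ m\ne n} \ldots\big|^2$-type expressions, so no cancellation occurs and a single nonzero $I$ yields a contribution $\gg N^2$.

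The main obstacle is this last step: proving that the off-diagonal main term is genuinely a positive definite quadratic form in $\{I_{a,b}\}$, rather than a signed sum that might cancel, while simultaneously obtaining uniform error terms in the equidistribution of products $ac$ over the hyperbolic region $acbd\le N^2$. I would first try to express the fourth moment as $\E|\,\cdot\,|^4=\sum_k|\sum_{mn=k}g_0(\theta m)g_0(\theta n)|^2$, restrict to $k$ with a prescribed rational structure to isolate the $(a,b)$ term, and drop the others by positivity.
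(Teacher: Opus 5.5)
Your overall architecture matches the paper's: reduction to mean zero via Harper and Slutsky, Soundararajan--Xu for sufficiency, and uniform integrability of the fourth moment for necessity. The necessity direction, however, has a genuine gap at the uniform-integrability step.

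\textbf{The sixth-moment bound.} You plan to bound the sixth moment of $S_{g_0}(N)$ by Bonami-type inequalities using only that the weights are bounded. No bound of that kind can be uniform in $N$. Cauchy--Schwarz in $\E|\sum_n a_nf(n)|^{2k}=\sum_m|\sum_{n_1\cdots n_k=m}a_{n_1}\cdots a_{n_k}|^2$, or Weissler/Bayart-type hypercontractivity, only gives bounds that grow with $N$ (for instance $\E|S|^{2k}\ll_k(\log N)^{k^2-1}$ from the former). The loss is real: for $a_n\equiv 1$ the normalized fourth moment is of exact order $\log N$, since $n_1n_2=n_3n_4$ has $\asymp N^2\log N$ solutions in $[1,N]^4$. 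The bound must therefore exploit the oscillation of the weights, and this is the paper's main new input, \Cref{thm: bounded moments}. There, the Burkholder--Davis--Gundy inequality applied to the martingale differences $\CM_p(N)$ reduces $\E|S|^{2k}$ to the $k$-th moment of $\sum_p|\CM_p(N)|^2$. The non-degenerate terms of that moment are prime exponential sums $\sum_p e(\theta p(mr-ns))$, which are bounded by Chen's estimate (\Cref{thm: exp sum primes}), with the spacing lemma (\Cref{lem: badness spacing}) handling the exceptional $m$. That argument requires a trigonometric polynomial $Q_L$ with $L\le(\log N)^{1/10}$. So the paper never bounds moments of $S_g(N)$ itself; it works with $X_N=S_{Q_{L_N}}(N)$ for slowly growing $L_N$, which inherits the Gaussian limit because $\E|S_g(N)-X_N|^2\to 0$.

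\textbf{The fourth-moment lower bound.} The step you flag as the main obstacle also cannot be settled by discarding terms. By \Cref{lem:parameterization} one has exactly $\E|\sum_{n\le N}a_nf(n)|^4=\sum_{(u,v)=1}|\sum_{d\le N/\max(u,v)}a_{du}\overline{a_{dv}}|^2$, so every block is nonnegative. But the Gaussian value $2V^2$ is spread over all blocks. One $V^2$ is the block $u=v=1$. The other is essentially the total of the $d=d'$ terms over the remaining blocks, and almost all of that mass sits in blocks with $\max(u,v)$ large, mixed with cross terms $d\ne d'$ of no fixed sign. Keeping only the blocks with $\max(u,v)\le K$ yields about $V^2+N^2\sum_{u\ne v,\,\max(u,v)\le K}|I_{u,v}|^2/\max(u,v)^2$, where $I_{u,v}=\int_0^1 g_0(ut)\overline{g_0(vt)}\,dt$, and this need not exceed $2V^2$. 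The same happens with your representation $\sum_k|\sum_{mn=k}a_ma_n|^2$.

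What is actually needed is the full asymptotic $\E|S_{Q_L}(N)|^4=2+2N^2\Delta(Q_L)/V_{Q_L}(N)^2+o(1)$ of \Cref{lem:fourth moment}. Each of the two frequency-alignment types $\{n_1\l_1,n_2\l_2\}=\{n_3\l_3,n_4\l_4\}$ contributes $N^2\Delta(Q_L)$, and the non-aligned configurations are $o(N^2)$ by the exponential-sum analysis of \Cref{lem:reduction cond 2}. Positivity then comes not from dropping terms but from the exact identity of \Cref{prop simpl delta}: $\Delta(g)=\sum_{(a,b)=1,\,a\ne b}H(a/b)^{-2}|\int_0^1 g(at)\overline{g(bt)}\,dt-|\hat g(0)|^2|^2$. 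Note the weights are $\max(a,b)^{-2}$, not $1/(ab)$.

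\textbf{Sufficiency.} Condition (2) of \Cref{thm:sound-xu 3.1} is a signed sub-sum, restricted by $P^+(n_1)=P^+(n_3)$ and $P^+(n_2)=P^+(n_4)$. It is asymptotic to $N^2\Delta$ rather than $2N^2\Delta$, so it does not follow from smallness of the full off-diagonal sum, and it needs exponential sums over smooth numbers. Also, replacing $g_0$ by a Fej\'er mean inside these quartic sums, which have about $N^2\log N$ terms, is not controlled by $\|g_0-Q_L\|_2$. This is why the paper applies the criterion only to $Q_L$ and compares characteristic functions through $\E|S_g(N)-S_{Q_L}(N)|^2$.
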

	\begin{rem}\label{rem Delta} 
		In our proofs, the condition which will `pop out' of our calculations is not in fact the one involving $\int g(at)\overline{g(bt)}$. Instead, the more complicated-looking condition $\Delta(g) = 0$ will arise, where
		\begin{equation}\label{eq: def Delta}
			\Delta(g) :=  \sum_{\substack{\l_1,\l_2,\l_3,\l_4 \in \Z \\ \l_1\l_2 = \l_3\l_4 \\ \l_1 \ne \l_3, \l_2 \ne \l_4 \\ \l_1 \l_3,\: \l_2\l_4 > 0}}\frac{\hat{g}(\l_1)\hat{g}(\l_2) \overline{\hat{g}(\l_3)\hat{g}(\l_4)}}{H(\l_1/\l_3)H(\l_2/\l_4)},
		\end{equation}
		where $\hat{g}(r)$ denotes the $r^{th}$ Fourier coefficient of $g$, and $H(\cdot)$ denotes the na\"ive (Weil) height. It is worth pointing out here that $\Delta(g)$ is well defined: the sum defining it is absolutely convergent for any given $g \in \CB\CV[\R/\Z]$, since $\hat{g}(r) \ll_g |r|^{-1}$ for each $r \in \Z_{\ne 0}$ when $g$ is of bounded variation. This bound is sufficient to prove the convergence of $\Delta(g)$. Although $\Delta(g)$ seems rather complicated, we show in \Cref{prop simpl delta} that the condition $\Delta(g) = 0$ is equivalent to the condition appearing in the statement of \Cref{thm:A}:
		$$\Delta(g) = 0\iff \forall a,b \in \N: a \ne b, (a,b) = 1, \text{ one has } \int_0^1 g(at)\overline{g(bt)}dt = \bigg|\int_0^1 g(t)dt\bigg|^2.$$
		Thus, in practice, we will always work with the quantity  $\Delta(g)$. Our goal is then to show that $$S_{g}(N) \xrightarrow[(N \to \infty)]{d} \CC\CN\bigg(0,1-\frac{|\hat{g}(0)|^2}{\|g\|_2^2}\bigg) \iff \Delta(g) = 0.$$
		This is clearly equivalent to \Cref{thm:A}, in light of the present remark.
	\end{rem}
	\Cref{thm:A} will be proven in \Cref{sec:proof if} and \Cref{sec: proof only if}. A few words are in order regarding the proof of the `only if' direction. The core strategy relies on uniformly integrable moments. Suppose, for the sake of contradiction, that $S_g(N)$ converges in distribution to a standard complex Gaussian, but $\Delta(g) \neq 0$ (see \Cref{rem Delta}). To circumvent the analytic difficulties of working with the full weight $g$, we approximate it by a Fourier polynomial $Q_{L_N}$ of slowly growing degree $L_N \to \infty$, yielding a proxy sequence $X_N := S_{Q_{L_N}}(N)$ which must also converge in distribution to the same Gaussian limit. 
	
	By applying martingale inequalities (specifically, the Burkholder--Davis--Gundy inequalities), we will show that the sixth moment of $X_N$ is uniformly bounded. This uniform boundedness is the linchpin of the proof: it guarantees that the sequence of fourth moments $\{|X_N|^4\}_{N \geq 1}$ is uniformly integrable. Because uniform integrability allows us to pass weak convergence to the limit of expectations, the fourth moment of $X_N$ must converge to the fourth moment of the standard complex Gaussian, which is exactly $2$. 
	
	On the other hand, a direct combinatorial expansion of the fourth moment of $X_N$ reveals that it converges asymptotically to $2 + 2\Delta(g)/\|g\|_2^4$. Equating these two limits rigorously forces $\Delta(g) = 0$, yielding the desired contradiction.
	
	Even though $\Delta(g) \ne 0$ unless $g$ has a very specific form, it turns out that an arbitrarily mild roughness condition allows one to recover a Gaussian behavior for arbitrary $g \in \CB\CV[\R/\Z]$, regardless of the value of $\Delta(g)$. We define
	\begin{equation}\label{eq:def S_N rough}
		S_{g}(N;z) := \frac{1}{\sqrt{V_{g}(N;z)}}\sum_{\substack{n \leq N \\ P^-(n) > z}} f(n) g\big(\theta n\big),
	\end{equation}
	where $f$ is sampled from the ensemble of Steinhaus random multiplicative functions, and the variance $V_{g}$ is defined as
	$$V_{g}(N;z) := \E\Big[\Big|\sum_{\substack{n \leq N \\ P^-(n) > z}}f(n)g(\theta n)\Big|^2\Big] = \sum_{\substack{n \leq N \\ P^-(n) > z}}\Big|g\big(\theta n\big)\Big|^2.$$
	With these generalized\footnote{
		By setting $z=1$, we recover the original $S_g(N)$ and $V_g(N)$.
	} definitions in hand, we are ready to state our second theorem. 
	\begin{thm}\label{thm:B}
		Fix $\theta$ to be an irrational number satisfying \eqref{eq:sound-xu badness approx}. Let $g \in \CB\CV_0[\R/\Z]$ be a non-constant function. Let $\xi(N)$ be a function of $N$ which tends to $+\infty$, and satisfies $N-\xi(N) \to +\infty.$ Then $S_{g}(N;\xi(N))$ converges in distribution to a standard complex Gaussian random variable as $N \to \infty$. That is
		$$S_{g}(N;\xi(N)) \xrightarrow[(N \to \infty)]{d} \CC\CN(0,1).$$
	\end{thm}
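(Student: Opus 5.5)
We would prove \Cref{thm:B} through the martingale central limit theorem of Soundararajan and Xu recalled in \Cref{sec:MDS}. Their criterion requires two things for the weights $a_n = g(\theta n)\mathbf{1}_{P^-(n)>z}$ with $z=\xi(N)$: a Lindeberg-type condition and the negligibility of the off-diagonal fourth moment. The Lindeberg condition is immediate from $\max|a_n|\le\|g\|_\infty$ together with $V_g(N;z)\to\infty$. This growth of the variance holds because $N-\xi(N)\to\infty$: by equidistribution of $\theta n$ and the BV property, $V_g(N;z)$ is asymptotically $\|g\|_2^2\cdot\#\{n\le N: P^-(n)>z\}$, which tends to infinity. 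This count includes at least the primes in $(z,N]$, together with $1$, so it is large even when $z$ is close to $N$.

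So the content lies in the fourth moment. Write $\Sigma = \sum_{m_1m_2=m_3m_4,\ \{m_1,m_2\}\ne\{m_3,m_4\}} a_{m_1}a_{m_2}\overline{a_{m_3}a_{m_4}}$. We must show $\Sigma = o(V_g(N;z)^2)$, or more precisely bound the martingale-difference quantities from \Cref{sec:MDS}, which reduce to sums of this type.

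We proceed in three steps.

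\textbf{Step 1.} Approximate $g$ by a Fourier polynomial $Q_L$ of degree $L$. The error $g-Q_L$ has small $L^2$ norm, and by equidistribution its contribution to the variance is $o(V)$. The fourth-moment error would be handled with the crude bound on solutions of $m_1m_2=m_3m_4$, or by applying the CLT criterion to the approximating sum and using Slutsky.

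\textbf{Step 2.} For $Q_L$, expand into characters $e(\ell\theta n)$ with $|\ell|\le L$. The off-diagonal sum then becomes a combination of quantities
\[
\sum_{\substack{m_1m_2=m_3m_4\\ P^-(m_i)>z}} e\big(\theta(\ell_1m_1+\ell_2m_2-\ell_3m_3-\ell_4m_4)\big).
\]
Parametrize $m_1=ab$, $m_2=cd$, $m_3=ac$, $m_4=bd$. The bulk is estimated as in \Cref{thm:A}: the exponential sum over the free variables cancels except when the linear form vanishes identically along a one-parameter family. This happens when $\ell_1 a=\ell_3 c$-type relations hold, and it produces the terms of $\Delta(g)$ with the height weights $1/(H(\ell_1/\ell_3)H(\ell_2/\ell_4))$. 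The diophantine condition \eqref{eq:sound-xu badness approx} controls the non-resonant terms.

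\textbf{Step 3.} The key point is the roughness condition. A resonant configuration needs $a,c$ (and $b,d$) to be proportional in a fixed ratio $\ell_3/\ell_1$ with $\ell_1\neq\ell_3$, so one variable carries the extra factor $\ell_1/(\ell_1,\ell_3)$ or $\ell_3/(\ell_1,\ell_3)$, which is $>1$. Since all prime factors exceed $z$, the resonant contribution from a pair $(\ell_1,\ell_3)$ survives only if the reduced numerator and denominator of $\ell_1/\ell_3$ are themselves $z$-rough, which forces $H(\ell_1/\ell_3)>z$. Therefore the rough analogue of $\Delta$ is bounded by the tail
\[
\sum_{H(\ell_1/\ell_3)>z \text{ or } H(\ell_2/\ell_4)>z}\frac{|\hat g(\ell_1)\hat g(\ell_2)\hat g(\ell_3)\hat g(\ell_4)|}{H(\ell_1/\ell_3)H(\ell_2/\ell_4)},
\]
which tends to $0$ as $z\to\infty$ by the absolute convergence noted in \Cref{rem Delta}. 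The mean-zero hypothesis removes the $\ell=0$ terms, which would otherwise give the nonzero-mean correction seen in \Cref{thm:A}. The resulting fourth moment is $2+o(1)$, and the Soundararajan–Xu criterion gives convergence to $\CC\CN(0,1)$.

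\textbf{Main obstacle.} We expect the main obstacle to be Step 2 carried out uniformly in $z$ when $z$ is large. The density of $z$-rough integers, roughly $1/\log z$, interacts with the sieve weights in the counts of $(a,b,c,d)$, and the non-resonant exponential sums must be bounded by $o(V^2)$ with $V\asymp N/\log z$ rather than $N$. Our first attempt would be to split into the cases $z\le N^{\epsilon}$, where fundamental-lemma sieve asymptotics apply, and $z>N^{\epsilon}$. In the second case rough numbers up to $N$ have boundedly many prime factors, so the equation $m_1m_2=m_3m_4$ has essentially only trivial off-diagonal solutions and $\Sigma$ can be bounded directly.
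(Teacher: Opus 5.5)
Your outline has the same architecture as the paper's proof. You truncate to $Q_L$, apply \Cref{thm:sound-xu 3.1} to $a_n=\bm{1}_{P^-(n)>z}Q_L(\theta n)$, dispose of the non-aligned terms with the Diophantine condition and exponential sums, and use roughness to kill the aligned ones. Your Step 3 is essentially the paper's argument that $S_1=0$. A relation $n_1\ell_1=n_3\ell_3$ with $\ell_1\ne\ell_3$ forces a $z$-rough $n_i$ to be divisible by the reduced numerator or denominator of $\ell_1/\ell_3$, which is impossible once $L\le z$. The paper simply takes $L\le\xi(N)$, so the aligned sum is empty rather than a tail.

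\textbf{The gap: condition (2).} \Cref{thm:sound-xu 3.1} is not a fourth-moment criterion. Showing $\Sigma=o(V^2)$, i.e.\ $\E|S|^4\to 2$, is what the ``only if'' argument proves \emph{necessary}. Condition (2), however, is the off-diagonal sum \emph{restricted} to $P^+(n_1)=P^+(n_3)$ and $P^+(n_2)=P^+(n_4)$. Because the summands oscillate, a bound for $\Sigma$ says nothing about this sub-sum. With $n_1=ga$, $n_2=hb$, $n_3=gb$, $n_4=ha$, the restriction becomes $P^+(ab)\le\min\{P^+(g),P^+(h)\}$. So the variable carrying the cancellation runs over $z$-rough $g$ with $P^+(g)>P^+(ab)$, a constraint coupled to the other variables. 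Your Step 2 only envisages cancellation over rough integers, which does not cover this.

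The paper handles it as follows. It writes the $g$-sum as all $z$-rough $g$ minus the $z$-rough, $P^+(ab)$-smooth ones. It bounds the two pieces by \Cref{exp sum rough} and \Cref{thm: exp sum smooth}; the latter is built from Granville--de la Bret\`eche and Chen's prime-sum bound. It discards the exceptional $b$ via \Cref{lem: badness spacing}. This is the technical core of \Cref{lem:reduction cond 2}, and it is missing from your sketch.

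\textbf{Condition (3).} This is not Lindeberg for individual summands. It is $\sum_p\E|\CM_p|^4$, and the $\CM_p$ are not bounded by $\max|a_n|$. It needs the count in \Cref{lem: smooth divisor sums} together with $V\gg N/\log z$, not merely $V\to\infty$.

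\textbf{Your ``main obstacle'' is real.} The paper's write-up passes over it. \Cref{cor: sound-xu application} asserts $V\asymp N$, whereas \Cref{lem: variance estimate 2} gives $V\asymp N/\log z$. Hence the error $O(N^2L(\log N)^{-4/5})$ from \Cref{lem:reduction cond 2} is $o(V^2)$ only when $\log\xi(N)=o((\log N)^{2/5})$. For larger $\xi$ there are two options:
\begin{enumerate}
\item Rerun that lemma with larger $A$ and $\gamma$; \Cref{thm: exp sum smooth} allows any log-power saving.
\item Count directly, as you suggest. With $a,b,g,h$ all $z$-rough, $a\ne b$, $g\ne h$, $\max(a,b)\max(g,h)\le N$, there are $\ll N^2\log N/(\log z)^4$ solutions. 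This is $o(V^2)$ once $\log z/\sqrt{\log N}\to\infty$, in particular for $z\ge N^{\epsilon}$. That gives an elementary route in that range, with no exponential sums.
\end{enumerate}

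\textbf{The regime $z$ near $N$.} Your claim that the number of rough $n\le N$ is large even for $z$ close to $N$ is false. For instance, if $N-\xi(N)=\lfloor\log\log N\rfloor$, the interval $(\xi(N),N]$ contains no prime for most $N$, and then $S_g(N;\xi(N))$ is deterministic. This is a defect of the statement itself. The paper's one-line appeal to the classical CLT when $\xi(N)>\sqrt N$ has the same defect.
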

	\begin{rem}
		\Cref{thm:B} has been stated for $g \in \CB\CV_0[\R/\Z]$. The theorem can be extended to $g \in \CB\CV[\R/\Z]$, but we require an additional assumption on $\xi$. Such an extended theorem is presented in \Cref{cor:B}. The difficulty here is that the analogue of Haper's better-than-squareroot cancellation \eqref{eq:harper better sqrt} over $z$-rough numbers only holds in some ranges of $z$, as was proven by Xu \cite{xu2024better}. See the discussion in \Cref{sec:reduction} for more details.
	\end{rem}
	
	\subsection*{Notation} Throughout the paper we use standard Vinogradov and Oh notation: $\ll,\gg,\sim,\asymp,O(\cdot),o(\cdot)$. We write $e(x) = e^{2 \pi i x}$. The letter $p$ always denotes a prime number. The function $\log_kx$ stands for the $k$-fold iterated logarithm of $x$ (for example, $\log_2 x = \log \log x$). The number $P^+(n)$ denotes the largest prime factor of $n$. The number $P^-(n)$ denotes the smallest prime factor of $n$. The expected value of a random variable $X$ is written as $\E[X]$, and $X_n \to^d X$ means that the sequence of random variables $\{X_n\}$ converges in distribution to $X$. We write $Z\sim \CC\CN(0,\sigma^2)$ to mean that $\E [Z]=0$, $\E [|Z|^2]=\sigma^2$, and $\Re(Z),\Im(Z)$ are independent real Gaussians with variance $\sigma^2/2$. We occasionally use the abbreviation `RMF' (Random Multiplicative Function). 
	Given $a,b$ with $b \ne 0$, the na\"ive (Weil) height is defined as:
	$$H(a/b) = \frac{\max \{|a|,|b|\}}{\gcd(|a|,|b|)}.$$
	Given a function $g \in L^2[\R/\Z]$, we write the total variation of $g$ as $\|g\|_{\CV}$. That is:
	$$\|g\|_{\CV} := \sup_{n \geq 1}\;\bigg(\sup_{\substack{0 \leq x_0 \leq \cdots \leq x_n \leq 1}} \sum_{i = 0}^{n-1} \big|g(x_{i+1})-g(x_i) \big|\bigg)$$
	We then define the set $\CB\CV[\R/\Z]$ to be those one-periodic functions which have bounded total variation, and which are equal to their midpoint at every point of discontinuity:
	$$\CB\CV[\R/\Z] := \bigg\{ g \in L^2[\R/\Z] : \|g\|_{\CV}<\infty, g(t) = \frac{g(t^-)+g(t^+)}{2} \quad \forall t \in [0,1] \bigg\}.$$
	The midpoint normalization ensures that any $g \in \CB\CV[\R/\Z]$ agrees with its Fourier series pointwise on $[0,1]$. We further define $\CB\CV_0[\R/\Z]$ to be those functions in $\CB\CV[\R/\Z]$ which have mean $0$:
	$$\CB\CV_0[\R/\Z] := \bigg\{ g \in \CB\CV[\R/\Z] : \hat{g}(0) = \int_0^1 g(t)dt = 0 \bigg\}.$$
	\section*{Acknowledgments}
	The author was supported by the Natural Sciences and Engineering Research Council of Canada's Doctoral Research Scholarship (NSERC CGRS D - 611593 - 2026) and by the Chaire Courtois en recherche fondamentale II, and by the National Research Council of Canada (RGPIN-2024-05850).
	
	The author thanks Dimitris Koukoulopoulos for his supervision and engagement throughout this project. The author also thanks (in no particular order) Max Xu, Seth Hardy, Tony Haddad, Cihan Sabuncu, Stelios Sachpazis, and Besfort Shala for helpful mathematical discussions. Lastly, the author is grateful to Stephanie Tanasia for her endless support.

	\section{Useful Lemmas}
	
	\subsection{General Lemmas}
	\begin{lem}[{\cite[Lemma 3.4]{sound-xu-CLT}}]\label{lem:parameterization}
		The solutions to $n_1n_2=n_3n_4$ with $n_i \in \N$ may be parameterized as
		$$n_1 = ga, \; n_2 = hb, \; n_3 = gb, \; n_4 = ha,$$
		where $a,b,g,h \in \N$, and $(a,b) = 1$. This parameterization is a $1$-$1$ correspondence. Diagonal solutions, meaning solutions for which the multisets ${n_1,n_2}$ and ${n_3,n_4}$ are equal, correspond to solutions with $g = h$ or $a = b$ (in which case $a = b = 1$).
	\end{lem}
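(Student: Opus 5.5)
The plan is to construct the parameterization explicitly and then check that it is a bijection. Given a solution $n_1n_2=n_3n_4$ in positive integers, set $g:=\gcd(n_1,n_3)$. Write $n_1=ga$ and $n_3=gb$, so that $(a,b)=1$. Then $ga\,n_2=gb\,n_4$, that is, $a n_2 = b n_4$. Since $(a,b)=1$, it follows that $b\mid n_2$. Write $n_2=hb$; substituting gives $n_4=ha$. All of $a,b,g,h$ are positive integers. This shows that every solution arises from some quadruple $(g,h,a,b)$ with $(a,b)=1$.

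Conversely, any quadruple $(g,h,a,b)$ gives a solution, because $(ga)(hb)=(gb)(ha)$. For injectivity, suppose $(g,h,a,b)$ with $(a,b)=1$ produces $(n_1,n_2,n_3,n_4)$. Then $\gcd(n_1,n_3)=g\gcd(a,b)=g$, so $g$ is determined by the solution. Consequently $a=n_1/g$, $b=n_3/g$ and $h=n_2/b$ are determined as well. The map $(g,h,a,b)\mapsto(n_1,n_2,n_3,n_4)$ and the construction in the first paragraph are therefore mutually inverse, which establishes the $1$-$1$ correspondence.

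For the diagonal statement, the multisets $\{n_1,n_2\}$ and $\{n_3,n_4\}$ agree exactly when $n_1=n_3$ (which then forces $n_2=n_4$) or when $n_1=n_4$ (which then forces $n_2=n_3$).
\begin{itemize}
\item If $n_1=n_3$, then $ga=gb$, so $a=b$. Together with $(a,b)=1$ this gives $a=b=1$.
\item If $n_1=n_4$, then $ga=ha$, so $g=h$.
\end{itemize}
Conversely, if $a=b$ then $a=b=1$ and $n_1=n_3=g$, $n_2=n_4=h$, so the solution is diagonal. If $g=h$ then $n_1=n_4$ and $n_2=n_3$, which is again diagonal.

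There is no real obstacle in this argument. The only point needing care is the injectivity check, specifically recovering $g$ as $\gcd(n_1,n_3)$, and that step uses the coprimality condition $(a,b)=1$.
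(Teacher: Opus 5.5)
Your proof is correct and complete. The paper gives no proof of this lemma and simply cites Soundararajan--Xu; your argument is the standard one: take $g=\gcd(n_1,n_3)$, use $(a,b)=1$ to get $b\mid n_2$, recover $g$ as $\gcd(n_1,n_3)$ for injectivity, and reduce the diagonal cases to $n_1=n_3$ or $n_1=n_4$.
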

	
\begin{lem}\label{prop simpl delta}
	Let $g \in \CB\CV[\R/\Z]$ be given. With $\Delta$ defined as in \eqref{eq: def Delta}, we have
	$$\Delta(g) = \sum_{\substack{a,b \in \N \\ a \ne b \\ (a,b) = 1}}\frac{1}{H(a/b)^2} \bigg|\int_0^1 g(at)\overline{g(bt)}dt - \Big|\int_0^1 g(t)dt\Big|^2\bigg|^2.$$
	In particular,
	\begin{align*}
		\Delta(g) = 0 & \iff \forall a,b \in \N: a \ne b, (a,b) = 1, \text{ one has } \int_0^1 g(at)\overline{g(bt)}dt =\Big|\int_0^1 g(t)dt\Big|^2.  
	\end{align*}
\end{lem}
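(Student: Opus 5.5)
The plan is to unfold the sum \eqref{eq: def Delta} defining $\Delta(g)$ with the parameterization of \Cref{lem:parameterization}, and then to recognize the inner sums as Fourier (Parseval) expansions of $\int_0^1 g(at)\overline{g(bt)}\,dt$.

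\textbf{Step 1: reduce to positive integers.} The conditions $\l_1\l_3>0$ and $\l_2\l_4>0$ force every $\l_i\neq 0$. They also force $\l_1,\l_3$ to share a sign $\varepsilon\in\{\pm1\}$ and $\l_2,\l_4$ to share a sign $\delta\in\{\pm1\}$. With $n_i=|\l_i|$ the equation becomes $n_1n_2=n_3n_4$ in $\N$, with $n_1\ne n_3$ and $n_2\ne n_4$.

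\textbf{Step 2: parameterize.} By \Cref{lem:parameterization}, write $n_1=ga$, $n_2=hb$, $n_3=gb$, $n_4=ha$ with $(a,b)=1$. To avoid a clash with the function $g$, I would rename the parameters $g,h$ to $j,k$. Both $n_1\ne n_3$ and $n_2\ne n_4$ are equivalent to $a\ne b$, and $j,k\in\N$ are then unrestricted. Since $(a,b)=1$, we get $H(\l_1/\l_3)=H(a/b)=\max(a,b)=H(\l_2/\l_4)$. Absorbing the signs into $j$ and $k$ (now ranging over $\Z\setminus\{0\}$), the sum factors as
$$\Delta(g)=\sum_{\substack{a\ne b\\ (a,b)=1}}\frac{1}{H(a/b)^2}\Big(\sum_{j\ne0}\hat g(ja)\overline{\hat g(jb)}\Big)\Big(\sum_{k\ne0}\hat g(kb)\overline{\hat g(ka)}\Big).$$
The second factor is the complex conjugate of the first. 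This rearrangement is legitimate because $\Delta(g)$ converges absolutely, as noted in \Cref{rem Delta}.

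\textbf{Step 3: identify the inner sums.} The functions $t\mapsto g(at)$ and $t\mapsto g(bt)$ have Fourier coefficients supported on $a\Z$ and $b\Z$ respectively, with values $\hat g(m)$ at $am$ and at $bm$. By Parseval, $\int_0^1 g(at)\overline{g(bt)}\,dt$ is the sum of $\hat g(m)\overline{\hat g(m')}$ over pairs with $am=bm'$. Coprimality gives $m=kb$ and $m'=ka$, so
$$\int_0^1 g(at)\overline{g(bt)}\,dt=\sum_{k\in\Z}\hat g(kb)\overline{\hat g(ka)}.$$
This series converges absolutely, since $\hat g(r)\ll|r|^{-1}$ gives $\sum_{k\ne0}|\hat g(kb)\hat g(ka)|\ll (ab)^{-1}\sum_k k^{-2}$. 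The $k=0$ term is $|\hat g(0)|^2=|\int_0^1 g|^2$. Hence the second factor in Step 2 equals $\int_0^1 g(at)\overline{g(bt)}\,dt-|\int_0^1 g|^2$, and the first factor is its conjugate. Their product is the squared modulus, which gives the claimed formula.

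\textbf{Step 4: the equivalence.} Every term in the formula is nonnegative and each weight $H(a/b)^{-2}$ is strictly positive. So $\Delta(g)=0$ if and only if every term vanishes, which is exactly the stated condition.

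The main obstacle is Step 2. One must check carefully that the sign bookkeeping is a bijection onto $j,k\in\Z\setminus\{0\}$, and that no pair is lost or double counted: each $(a,b)$ with $a\ne b$ appears once as an ordered pair, and $(a,b)$ and $(b,a)$ both appear. Everything else is routine given the absolute convergence.
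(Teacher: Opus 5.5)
Your proposal is correct and takes essentially the paper's route. You split off the signs, use the parameterization of \Cref{lem:parameterization} to factor $\Delta(g)$ as $\sum_{a\ne b,\,(a,b)=1} H(a/b)^{-2}\bigl|\sum_{r\ne 0}\hat g(ra)\overline{\hat g(rb)}\bigr|^2$, and identify the inner sum, with the $r=0$ term added back, as $\int_0^1 g(at)\overline{g(bt)}\,dt$. The only difference is that you apply the polarized Parseval identity to the $L^2$ functions $g(a\cdot)$ and $g(b\cdot)$ directly, whereas the paper re-derives this identity through Fej\'er-kernel truncations and a limiting argument; your version is shorter and equally rigorous.
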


\begin{proof}
	First of all, it is a standard fact from Fourier analysis that any $g \in \CB\CV[\R/\Z]$ satisfies the bound $|\hat{g}(k)| \ll_g 1/|k|$ for all $k \in \Z_{\ne 0}$. In particular, this implies that 
	$$|\Delta(g)| \ll_g 1+ \sum_{\substack{\l_1,\l_2, \l_3,\l_4 \in \Z \\ \l_1\l_2 = \l_3\l_4}} \frac{1}{|\l_1\l_2\l_3\l_4|} \ll \sum_{\substack{\l > 0 }} \frac{\tau(\l)^2}{\l^2} < \infty.$$
	As such, the sums defining $\Delta(g)$ converge absolutely for any given $g$. This justifies manipulating and rearranging the sums as we will now do. Let $\xi_1 = \text{sign}(\l_1) = \text{sign}(\l_3),$ and let $\xi_2 = \text{sign}(\l_2) = \text{sign}(\l_4).$ By the definition of $\Delta(g)$ given in \eqref{eq: def Delta}, we can write
	$$\Delta(g) = \sum_{\xi_1,\xi_2 \in \{\pm 1\}}\sum_{\substack{\l_1,\l_2,\l_3,\l_4 \in \Z_{>0} \\ \l_1\l_2 = \l_3\l_4 \\ \l_1 \ne \l_3, \l_2 \ne \l_4}}\frac{\hat{g}(\xi_1\l_1)\hat{g}(\xi_2\l_2) \overline{\hat{g}(\xi_1\l_3)\hat{g}(\xi_2\l_4)}}{H(\l_1/\l_3)H(\l_2/\l_4)}. $$
	
	By \Cref{lem:parameterization}, we can parameterize the $\l_i$ as
	$$\l_1 = ra, \; \l_2 = sb, \; \l_3 = rb, \; \l_4 = sa,$$
	where $a,b,r,s \in \Z_{>0}$, $(a,b) = 1,$ and $a \ne b$ since $\l_1 \ne \l_3$ and $\l_2 \ne \l_4$. Using this parameterization, we have
	$$\Delta(g) = \sum_{\xi_1,\xi_2 \in \{\pm 1\}}\sum_{\substack{a,b,r,s \in \Z_{>0} \\ a \ne b \\ (a,b) = 1}}\frac{\hat{g}(\xi_1ra)\hat{g}(\xi_2sb) \overline{\hat{g}(\xi_1rb)\hat{g}(\xi_2sa)}}{H(a/b)^2}, $$
	since $H(a/b)=H(b/a)$. Taking the $r,s$ sums to the inside gives
	\begin{align*}
		\Delta(g) &= \sum_{\xi_1,\xi_2 \in \{\pm 1\}}\sum_{\substack{a,b \in \Z_{>0} \\ a \ne b \\ (a,b) = 1}}\frac{1}{H(a/b)^2} \sum_{r,s \in \Z_{>0}}\hat{g}(\xi_1ra)\hat{g}(\xi_2sb) \overline{\hat{g}(\xi_1rb)\hat{g}(\xi_2sa)} \\
		&=\sum_{\substack{a,b \in \Z_{>0} \\ a \ne b \\ (a,b) = 1}}\frac{1}{H(a/b)^2} \bigg(\sum_{r \in \Z_{>0}} \sum_{\xi_1 \in \{\pm 1\}}\hat{g}(\xi_1ra)\overline{\hat{g}(\xi_1rb)} \bigg)\overline{\bigg(\sum_{s \in \Z_{>0}} \sum_{\xi_2 \in \{\pm 1\}}\hat{g}(\xi_2 sa)\overline{\hat{g}(\xi_2 sb)} \bigg)}\\
		&=\sum_{\substack{a,b \in \Z_{>0} \\ a \ne b \\ (a,b) = 1}}\frac{1}{H(a/b)^2} \bigg|\sum_{r \in \Z_{>0}} \sum_{\xi_1 \in \{\pm 1\}}\hat{g}(\xi_1ra)\overline{\hat{g}(\xi_1rb)} \bigg|^2.
	\end{align*}
	
	Since $r$ ranges over all strictly positive integers, then the double sum of $\xi_1$ and $r$ can be rewritten as a single sum of $r$, now ranging over all nonzero integers. This proves that 
	$$\Delta(g) = \sum_{\substack{a,b \in \N \\ a \ne b \\ (a,b) = 1}}\frac{1}{H(a/b)^2} \bigg|\sum_{r \in \Z_{\ne 0}} \hat{g}(ra) \overline{\hat{g}(rb)}\bigg|^2.$$
	
	By adding and subtracting the $r=0$ term, we can write the inner sum over all integers $\Z$:
	$$\sum_{r \in \Z_{\ne 0}} \hat{g}(ra) \overline{\hat{g}(rb)} = \sum_{r \in \Z} \hat{g}(ra) \overline{\hat{g}(rb)} - |\hat{g}(0)|^2.$$
	 Let $F_N$ denote the $N$-th Fej\'er kernel and define the sequence of trigonometric polynomials $g_N = g * F_N$. The Fourier coefficients of $g_N$ are strictly supported on $|n| \le N$, given explicitly by $\widehat{g_N}(n) = (1 - |n|/N)\hat{g}(n)$ for $|n| \le N$ and zero otherwise. 
	
	For these truncated polynomials, the corresponding double sum is finite, so we may safely interchange summation and integration without absolute convergence concerns:
	\begin{align*}
		\sum_{r \in \Z} \widehat{g_N}(ra) \overline{\widehat{g_N}(rb)} &= \sum_{n_1,n_2 \in \Z} \bm{1}_{bn_1 = an_2} \widehat{g_N}(n_1) \overline{\widehat{g_N}(n_2)} \\
		&= \int_0^1 \bigg(\sum_{n_1 \in \Z} \widehat{g_N}(n_1)e(n_1bt)\bigg)\bigg(\sum_{n_2 \in \Z} \overline{\widehat{g_N}(n_2)}e(-n_2at)\bigg)dt \\
		&= \int_0^1 g_N(bt)\overline{g_N(at)}dt.
	\end{align*}
	Above, we have used the fact that $\bm{1}_{X = Y} = \int_0^1 e(t(X-Y))dt$. We now take the limit as $N \to \infty$ on both sides. On the left-hand side, since $g \in \CB\CV[\R/\Z]$, we have the bound $|\hat{g}(k)| \ll_g 1/|k|$ for $k \ne 0$. This gives the dominating bound:
	$$|\widehat{g_N}(ra) \overline{\widehat{g_N}(rb)}| \le |\hat{g}(ra)\hat{g}(rb)| \ll_g \frac{1}{ab r^2},$$
	which is absolutely summable over $r \in \Z_{\ne 0}$. By the Dominated Convergence Theorem for counting measures, the left-hand side converges exactly to our target sum:
	$$\lim_{N \to \infty} \sum_{r \in \Z} \widehat{g_N}(ra) \overline{\widehat{g_N}(rb)} = \sum_{r \in \Z} \hat{g}(ra) \overline{\hat{g}(rb)}.$$
	
	On the right-hand side, because $g$ is of bounded variation, it is bounded and thus belongs to $L^2[\R/\Z]$. A standard property of the Fej\'er kernel is that $g_N \to g$ in $L^2[\R/\Z]$. By adding and subtracting cross terms and applying the Cauchy-Schwarz inequality, we can bound the difference of the integrals:
	\begin{align*}
		\bigg| \int_0^1 g_N(bt)\overline{g_N(at)}dt - \int_0^1 g(bt)\overline{g(at)}dt \bigg| &\le \int_0^1 |g_N(bt) - g(bt)||\overline{g_N(at)}| dt \\
		& \quad + \int_0^1 |g(bt)||\overline{g_N(at)} - \overline{g(at)}| dt \\
		&\le \|g_N(b \cdot) - g(b \cdot)\|_{L^2} \|g_N(a \cdot)\|_{L^2}\\ & \quad + \|g(b \cdot)\|_{L^2} \|g_N(a \cdot) - g(a \cdot)\|_{L^2}.
	\end{align*}
	Above, $g(a \cdot)$ indicates the function $t \to g(at)$ and likewise for $g(b \cdot)$. By the 1-periodicity of these functions, the $L^2$ norm over $[0,1]$ is invariant under non-zero integer dilations, so $\|f(c \cdot)\|_{L^2} = \|f\|_{L^2}$. The bound simplifies to:
	$$\|g_N - g\|_{L^2} \|g_N\|_{L^2} + \|g\|_{L^2} \|g_N - g\|_{L^2}.$$
	Since $\|g_N - g\|_{L^2} \to 0$ and $\|g_N\|_{L^2}$ is uniformly bounded, the right-hand side converges to $\int_0^1 g(bt)\overline{g(at)}dt$. Equating the two limits establishes that:
	$$\sum_{r \in \Z} \hat{g}(ra) \overline{\hat{g}(rb)} = \int_0^1 g(bt)\overline{g(at)}dt.$$
	
	Substituting this equality and the shifted $|\hat{g}(0)|^2$ term back into our expression for $\Delta(g)$, we obtain:
	\begin{align*}
		\Delta(g) = \sum_{\substack{a,b \in \N \\ a \ne b \\ (a,b) = 1}}\frac{1}{H(a/b)^2} \bigg|\int_0^1 g(bt)\overline{g(at)}dt - |\hat{g}(0)|^2\bigg|^2.
	\end{align*}
	
	Note that taking the complex conjugate inside the absolute value yields the equivalent expression with $\int_0^1 g(at)\overline{g(bt)}dt$. This completes the first claim of the proof. The second claim follows immediately from the first, since a sum of non-negative terms is zero if and only if all terms are zero.
\end{proof}
	
	\subsection{Diophantine Spacing}
	
	\begin{lem}[Diophantine Spacing and Divisor Weights]\label{lem: badness spacing}
		Let $\theta \in \R \setminus \Q$ satisfy the Diophantine condition \eqref{eq:sound-xu badness approx}, i.e. 
		$$	\|q\theta\|:= \min_{n \in \Z}|q \theta -n |\geq C\exp(-|q|^{1/127}) \quad \forall q \in \Z \setminus \{0\}.$$
		Let $X \ge 2$, $Q \ge 1$, and $\eta \in (0, 1/2)$, such that $\eta Q \in (0,1/2)$. Let $s, y$ be integers with $s \ne 0$. Define the set
		$$ \mathcal{B} = \Big\{ b \le X : \exists q \in \Z_{\ne 0}, |q| \le Q \text{ s.t. } \|\theta q(b s - y)\| \le \eta \Big\}. $$
		Then any distinct elements $b, b' \in \mathcal{B}$ satisfy $|b - b'| \ge \delta$, where
		$$ \delta = \max\bigg\{1,\frac{1}{|s| Q^2} \bigg( \log \frac{C_1}{2 Q \eta} \bigg)^{127} \bigg \}, $$
		for some large absolute constant $C_1$. Consequently, we have the cardinality bound $|\mathcal{B}| \ll 1 + X/\delta$. Furthermore, for any integer $k \ge 1$, we have
		$$ \sum_{b \in \mathcal{B}} \tau_k(b) \ll_k X \delta^{-1/2} (\log X)^{\frac{k^2-1}{2}} + X^{1/2} (\log X)^{\frac{k^2-1}{2}}. $$
	\end{lem}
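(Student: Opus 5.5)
Three steps, in order: the spacing claim, then the cardinality bound, then the divisor-weighted bound via Cauchy--Schwarz.

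\textbf{Spacing.} Take distinct $b,b'\in\mathcal{B}$ with witnesses $q,q'$, so $0<|q|,|q'|\le Q$ and
$$\|\theta q(bs-y)\|\le\eta,\qquad \|\theta q'(b's-y)\|\le\eta .$$
Multiply the first relation by $q'$ and the second by $q$. Since $\|m x\|\le |m|\,\|x\|$ for integer $m$, this gives
$$\|\theta qq'(bs-y)\|\le Q\eta,\qquad \|\theta qq'(b's-y)\|\le Q\eta .$$
Subtracting,
$$\|\theta\, qq's(b-b')\|\le 2Q\eta<1.$$
The integer $m:=qq's(b-b')$ is nonzero because $b\ne b'$, $s\ne0$ and $q,q'\ne0$. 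The Diophantine condition \eqref{eq:sound-xu badness approx} then yields
$$C\exp(-|m|^{1/127})\le 2Q\eta,\qquad\text{i.e.}\qquad |m|\ge \Big(\log\frac{C}{2Q\eta}\Big)^{127}.$$
Using $|m|\le Q^2|s|\,|b-b'|$ gives
$$|b-b'|\ge \frac{1}{|s|Q^2}\Big(\log \frac{C}{2Q\eta}\Big)^{127}.$$
Set $C_1=\max(C,1)$; if the logarithm is small or negative, the trivial bound $|b-b'|\ge1$ for distinct integers takes over. This proves $|b-b'|\ge\delta$. The only care needed here is the positivity of the logarithm and the choice of $C_1$.

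\textbf{Cardinality.} Points of $[1,X]$ that are pairwise $\delta$-separated number at most $1+X/\delta$, so $|\mathcal{B}|\ll 1+X/\delta$.

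\textbf{Divisor-weighted sum.} I would apply Cauchy--Schwarz together with the classical bound $\sum_{b\le X}\tau_k(b)^2\ll_k X(\log X)^{k^2-1}$:
$$\sum_{b\in\mathcal B}\tau_k(b)\le |\mathcal B|^{1/2}\Big(\sum_{b\le X}\tau_k(b)^2\Big)^{1/2}\ll_k (1+X/\delta)^{1/2}X^{1/2}(\log X)^{\frac{k^2-1}{2}}.$$
Then $(1+X/\delta)^{1/2}\le 1+(X/\delta)^{1/2}$ gives exactly the two claimed terms, $X\delta^{-1/2}(\log X)^{\frac{k^2-1}{2}}$ and $X^{1/2}(\log X)^{\frac{k^2-1}{2}}$.

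I do not see a serious obstacle in any of these steps; the spacing step is the one that needs the most care.
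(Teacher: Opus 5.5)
Your proposal is correct and follows the paper's proof step for step: combine the two witnesses into $\|\theta qq's(b-b')\|\le 2Q\eta$, apply the Diophantine condition together with $|qq'|\le Q^2$, count the $\delta$-separated points, and finish with Cauchy--Schwarz, the bound $\sum_{b\le X}\tau_k(b)^2\ll_k X(\log X)^{k^2-1}$, and $\sqrt{A+B}\le\sqrt{A}+\sqrt{B}$. One small fix: the choice $C_1=\max(C,1)$ goes the wrong way, because $\delta$ is increasing in $C_1$, so when $C<1$ you would be claiming more than you proved; take $C_1=C$ instead, and note that when $\log\frac{C}{2Q\eta}\le 0$ the odd exponent $127$ makes the second entry of the maximum nonpositive, so $\delta=1$ and the trivial spacing of distinct integers suffices.
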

	\begin{proof}
		Suppose $b, b' \in \mathcal{B}$ are distinct. Then there exist non-zero $q_1, q_2 \in [-Q, Q]$ such that $\|\theta q_1(bs - y)\| \le \eta$ and $\|\theta q_2(b's - y)\| \le \eta$. By the triangle inequality, $\|\theta q_1 q_2 s (b - b')\| \le 2Q\eta$. Applying condition \eqref{eq:sound-xu badness approx}, we have
		$$ C_1 \exp\big(-|q_1 q_2 s (b - b')|^{1/127}\big) \le 2Q\eta. $$
		Rearranging this inequality and using $|q_1 q_2| \le Q^2$ yields the claimed lower bound $\delta$ on the spacing $|b - b'|$. The cardinality bound $|\mathcal{B}| \ll 1 + X/\delta$ follows immediately. 
		
		To bound the divisor sum, we apply the Cauchy-Schwarz inequality over the interval $[1, X]$:
		\begin{align*}
			\sum_{b \in \mathcal{B}} \tau_k(b) &\le \bigg( \sum_{b \in \mathcal{B}} 1 \bigg)^{1/2} \bigg( \sum_{b \le X} \tau_k(b)^2 \bigg)^{1/2} \\
			&\ll_k \big( 1 + X/\delta \big)^{1/2} X^{1/2} (\log X)^{\frac{k^2-1}{2}} \\
			&\le \big( X^{1/2} + X \delta^{-1/2} \big) (\log X)^{\frac{k^2-1}{2}},
		\end{align*}
		having used the standard estimate $\sum_{n \le X} \tau_k(n)^2 \ll_k X (\log X)^{k^2-1}$ and the inequality $\sqrt{A+B} \le \sqrt{A} + \sqrt{B}$.
	\end{proof}
	
	\begin{cor}\label{cor: log weights}
		Let $\mathcal{B}$ and $\delta \ge 1$ be as in Lemma \ref{lem: badness spacing}. For any $Y \ge 1$, we have
		$$ \sum_{\substack{b \in \mathcal{B} \\ b \ge Y}} \frac{\tau_k(b)}{b} \ll_k \delta^{-1/2} (\log X)^{\frac{k^2+1}{2}} + Y^{-1/2} (\log X)^{\frac{k^2-1}{2}}. $$
	\end{cor}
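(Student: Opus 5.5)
We will derive the corollary from the divisor-sum bound in Lemma \ref{lem: badness spacing} by dyadic decomposition together with partial summation. The one extra input is that the set $\mathcal{B}$ is defined with the cutoff $X$. Enlarging the range of $b$ from $X$ to any $X' \le X$ only removes elements, and the spacing $\delta$ does not depend on $X$. So the spacing property, and hence the divisor-sum bound of Lemma \ref{lem: badness spacing}, applies to $\mathcal{B}\cap[1,X']$ with $X$ replaced by $X'$.

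\textbf{Dyadic splitting.} Split the range $Y \le b \le X$ into dyadic blocks $b \in (U,2U]$ with $U = 2^j$, $Y/2 \le U \le X$. There are $O(\log X)$ such blocks. In each block $1/b \le 1/U$. Applying Lemma \ref{lem: badness spacing} with $X$ replaced by $2U$ gives
$$\sum_{\substack{b\in\mathcal{B}\\ U<b\le 2U}} \frac{\tau_k(b)}{b} \ll_k \frac{1}{U}\Big(U\delta^{-1/2} + U^{1/2}\Big)(\log 2U)^{\frac{k^2-1}{2}} \le \Big(\delta^{-1/2} + U^{-1/2}\Big)(\log X)^{\frac{k^2-1}{2}}.$$
Here we use $\log 2U \ll \log X$. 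If $Y > X$, the sum is empty and there is nothing to prove.

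\textbf{Summing the blocks.} The term $\delta^{-1/2}$ appears once per block. Summed over the $O(\log X)$ blocks, it contributes $\delta^{-1/2}(\log X)^{\frac{k^2-1}{2}+1} = \delta^{-1/2}(\log X)^{\frac{k^2+1}{2}}$, which is exactly the first claimed term. The term $U^{-1/2}$ forms a geometric series in $U \ge Y/2$. Its sum is $\ll Y^{-1/2}$, so it contributes $Y^{-1/2}(\log X)^{\frac{k^2-1}{2}}$ with no extra logarithm. This yields the second term.

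\textbf{Main point of care.} The only subtle step is justifying the application of Lemma \ref{lem: badness spacing} at scale $2U$ rather than $X$. For this we restate the Cauchy--Schwarz argument on each block directly. By the spacing property, $|\mathcal{B}\cap(U,2U]| \ll 1 + U/\delta$. We also have $\sum_{b\le 2U}\tau_k(b)^2 \ll_k U(\log X)^{k^2-1}$, valid for $U \ge 1$. The hypothesis $\delta \ge 1$ ensures the cardinality bound is consistent; no other conditions are needed.
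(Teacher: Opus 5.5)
Your proposal is correct and follows essentially the same route as the paper, which splits the range into $e$-adic blocks $(e^{j-1},e^j]$ rather than dyadic ones, applies the divisor-sum bound of Lemma~\ref{lem: badness spacing} on each block, and sums: the $\delta^{-1/2}$ term gains one logarithm from the $O(\log X)$ blocks, while the $e^{-j/2}$ terms form a convergent series bounded by $Y^{-1/2}$. Your remark that $\delta$ does not depend on $X$, so the lemma may be applied at each smaller scale, makes explicit a step that the paper's proof uses without comment.
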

	\begin{proof}
		We partition the sum into intervals $b \in (e^{j-1},e^j]$ for $\log Y \le j \le \log X + 1$. Applying the divisor sum bound from Lemma \ref{lem: badness spacing} to each interval gives:
		$$ \sum_{\substack{b \in \mathcal{B} \cap (e^{j-1},e^j]}} \tau_k(b) \ll_k \big( e^j \delta^{-1/2} + e^{j/2} \big) j^{\frac{k^2-1}{2}}. $$
		Dividing by $e^j$ and summing over $j$ yields
		\begin{align*}
			\sum_{\substack{b \in \mathcal{B} \\ b \ge Y}} \frac{\tau_k(b)}{b} &\ll_k \sum_{\log Y \le j \le \log X + 1} \Big( \delta^{-1/2} + e^{-j/2} \Big) j^{\frac{k^2-1}{2}} \\
			&\ll_k \delta^{-1/2} (\log X)^{\frac{k^2+1}{2}} + Y^{-1/2} (\log Y)^{\frac{k^2-1}{2}}.
		\end{align*}
	\end{proof}

	\subsection{Tools From Sieve Theory}
	
	\begin{lem}\label{lem: smooth divisor sums}
		Let $A > 0$ be given. For $N$ sufficiently large in terms of $A$, one has $$\sum_{p \leq N} \bigg(\sum_{\substack{m \leq N/p \\ P^+(m) \leq p}} \tau(m)\bigg)^2 \ll_A \frac{N^2}{(\log N)^A}.$$
	\end{lem}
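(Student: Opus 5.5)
The plan is to split the primes $p \le N$ according to the size of $p$ relative to $N$, and in each range bound the inner sum $T(p) := \sum_{m \le N/p,\, P^+(m)\le p}\tau(m)$ using smooth-number estimates. There are three ranges.

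\textbf{Small primes.} For $p \le N^{1/(\log_2 N)^2}$, say, write $u = \log(N/p)/\log p$. By Rankin's trick, for $\sigma = 1 - c/\log p$ we have
$$T(p) \le (N/p)^{\sigma}\sum_{P^+(m)\le p}\tau(m)m^{-\sigma} = (N/p)^\sigma\prod_{q\le p}(1-q^{-\sigma})^{-2},$$
which is $\ll (N/p)(\log p)^{O(1)}e^{-cu}$. Choosing $\sigma$ optimally gives the standard $\Psi$-type saving $\rho(u)$-like decay $e^{-u\log u/2}$, so here $T(p)\ll (N/p)(\log N)^{-A}$ comfortably. Summing $T(p)^2$ over $p$ then gives $\ll N^2(\log N)^{-2A}\sum_p p^{-2}$, which is fine.

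\textbf{Large primes.} For $p > N^{1/(\log_2 N)^2}$ I would not try to exploit smoothness. Instead use the trivial bound $T(p)\le \sum_{m\le N/p}\tau(m)\ll (N/p)\log N$, giving a contribution of
$$N^2(\log N)^2\sum_{p>N^{1/(\log_2N)^2}}p^{-2} \ll N^{2-1/(\log_2N)^2}(\log N)^2.$$
This is $\ll N^2(\log N)^{-A}$, since $N^{1/(\log_2N)^2}=\exp(\log N/(\log_2 N)^2)$ beats any power of $\log N$.

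\textbf{Intermediate range.} The cleanest approach is to split simply at $p = N^{\epsilon}$ with $\epsilon = \epsilon(N)=1/(\log_2 N)^2$, so that everything below is handled by the small-prime argument. There one must check that $u \ge (\log_2 N)^2-1$ and that $e^{-cu}(\log p)^{O(1)}$ is $\le (\log N)^{-A}$. Even the crude Rankin bound with $\sigma=1-1/\log p$ gives $T(p)\ll (N/p)e^{-u}(\log p)^{2e}$ (using $\prod_{q\le p}(1-q^{-\sigma})^{-1}\ll \log p$ in that range). Since $e^{-u} \le \exp(-(\log_2 N)^2/2)$, this is smaller than any fixed power of $\log N$ once $N$ is large in terms of $A$.

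The main obstacle is the Rankin estimate uniformly for tiny $p$ (e.g. $p=2$): there $\sigma$ must be chosen with care, since $1-1/\log 2<1/2$ makes the product large. For very small $p\le (\log N)^{B}$, I would instead use the direct bound that the number of $p$-smooth $m\le x$ is $\le (\log x)^{\pi(p)}$-ish, together with $\tau(m)\le x^{o(1)}$. This gives $T(p)\ll (N/p)^{1/2}$, which is trivially enough; such $p$ contribute $\ll N\log N$ in total.
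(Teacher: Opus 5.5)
\textbf{Verdict: correct, different parameter choices.} Your argument has the same skeleton as the paper's: a trivial divisor bound for large $p$ and Rankin's trick for small $p$. The parameters, however, are chosen quite differently.

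\textbf{What the paper does.} It cuts at $p=(\log N)^{A+2}$. Above this cut the trivial bound $T(p)\ll (N/p)\log N$ already suffices, because $\sum_{p>T}p^{-2}\ll 1/T$ only needs $T$ to be a suitable power of $\log N$. So smoothness is exploited only for $p\le(\log N)^{A+2}$. There a single fixed exponent $\sigma=1-1/(2A)$ gives a power saving $N^{-1/A}$, which overwhelms the Euler product $\exp(O(Ap^{1/(2A)}))\le\exp(O((\log N)^{5/6}))$. Since $\sigma>1/2$ is fixed, tiny primes need no separate treatment.

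\textbf{What your choice buys and costs.} Cutting at $N^{1/(\log_2 N)^2}$ makes the large range trivially negligible. The price is that you must save in the smooth sum all the way up to $p=N^{o(1)}$. Hence the $p$-dependent choice $\sigma=1-1/\log p$, which yields the pointwise bound $T(p)\ll (N/p)e^{-u}(\log p)^2$ with $u\ge(\log_2 N)^2-1$. This is stronger per-prime information than the lemma requires, and it forces an extra case for tiny primes.

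\textbf{The tiny-prime patch is stated too loosely.} The count $\#\{m\le x: P^+(m)\le p\}\le(1+\log x/\log 2)^{\pi(p)}$ is $N^{o(1)}$ only when $\pi(p)=o(\log N/\log_2 N)$. Worse, the claim $T(p)\ll (N/p)^{1/2}$ for all $p\le(\log N)^B$ is false when $B>2$: for $p=(\log N)^B$ one has $T(p)\ge\Psi(N/p,p)=N^{1-1/B+o(1)}$. You only need the patch where $\sigma=1-1/\log p$ degenerates, namely $p\le 7$ (really only $p=2$, where $\sigma<0$). For these finitely many primes, $T(p)\ll(\log N)^{8}$ trivially, since a $7$-smooth $m\le N$ has at most $(\log N)^{4}$ choices and $\tau(m)\ll(\log N)^4$. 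So take $B<1$, or simply a constant cutoff, and the argument goes through.
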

	\begin{proof}[Proof of \Cref{lem: smooth divisor sums}]
		Clearly, if the result holds for $A > 3$ then it also holds for $A \in (0,3]$. Thus, suppose without loss of generality that $A > 3$. We split $p$ into two ranges:
		\begin{equation}\label{eq:smooth divisor sums 1}\sum_{p \leq N} \bigg(\sum_{\substack{m \leq N/p \\ P^+(m) \leq p}} \tau(m)\bigg)^2 = \sum_{p \leq (\log N)^{A+2}} \bigg(\sum_{\substack{m \leq N/p \\ P^+(m) \leq p}} \tau(m)\bigg)^2+\sum_{(\log N)^{A+2} < p \leq N} \bigg(\sum_{\substack{m \leq N/p \\ P^+(m) \leq p}} \tau(m)\bigg)^2
		\end{equation}
		When $p > (\log N)^{A+2}$, we ignore the smoothness condition and write
		\begin{align*}
			\sum_{p > (\log N)^{A+2}} \bigg(\sum_{\substack{m \leq N/p \\ P^+(m) \leq p}} \tau(m)\bigg)^2 &\ll \sum_{p > (\log N)^{A+2}} \bigg(\sum_{\substack{m \leq N/p}} \tau(m)\bigg)^2\\
			&\ll N^2 (\log N)^2 \sum_{p > (\log N)^{A+2}} \frac{1}{p^2} .
		\end{align*} 
		Since $\sum_{p > T}1/p^2 \ll 1/T,$ we conclude that
		\begin{equation}\label{eq:smooth divisor sums 2}
			\sum_{p >(\log N)^{A+2}} \bigg(\sum_{\substack{m \leq N/p \\ P^+(m) \leq p}} \tau(m)\bigg)^2 \ll \frac{N^2}{(\log N)^A}.
		\end{equation}
		When $p \le (\log N)^{A+2}$, we apply Rankin's trick. For any $\sigma > 0$, we have
		$$\sum_{\substack{m \leq N/p \\ P^+(m) \leq p}} \tau(m) \leq \frac{N^\sigma}{p^\sigma} \sum_{\substack{m \geq 1 \\ P^+(m) \leq p}} \frac{\tau(m)}{m^\sigma} = \frac{N^\sigma}{p^\sigma} \prod_{q \leq p}\bigg(1-\frac{1}{q^\sigma}\bigg)^{-2}. $$
		By choosing $\sigma = 1-1/(2A)$, we obtain
		$$\sum_{\substack{m \leq N/p \\ P^+(m) \leq p}} \tau(m) \ll \frac{N^{1-1/(2A)}}{p^{1-1/(2A)}} \exp(4Ap^{\frac{1}{2A}}).$$
		As such, we have
		\begin{align*}
			\sum_{p \le (\log N)^{A+2}}\bigg(\sum_{\substack{m \leq N/p \\ P^+(m) \leq p}} \tau(m)\bigg)^2 &\ll N^{2-\frac{1}{A}}\sum_{p \le (\log N)^{A+2}}\frac{\exp(8Ap^{\frac{1}{2A}})}{p^{2-1/A}}\\ & \ll N^{2-\frac{1}{A}}\exp(8A(\log N)^{\frac{A+2}{2A}})\sum_{p  \le (\log N)^{A+2}}\frac{1}{p^{2-1/A}} 
		\end{align*}
		Since we assumed that $A > 3$, the $p$ sum above is $\ll 1$, and $(A+2)/(2A) \leq 5/6$, so we conclude that
		\begin{align}\label{eq:smooth divisor sums 3}
			\sum_{p \leq (\log N)^{A+2}}\bigg(\sum_{\substack{m \leq N/p \\ P^+(m) \leq p}} \tau(m)\bigg)^2 &\ll N^{2-\frac{1}{A}}\exp(8A(\log N)^{5/6}),
		\end{align}
		which is much smaller than the required $N^2/(\log N)^A$. We complete the proof of the lemma by combining \eqref{eq:smooth divisor sums 2} and \eqref{eq:smooth divisor sums 3} back into \eqref{eq:smooth divisor sums 1}.
	\end{proof}
	The next lemma provides a method for sieving the kind of objects we will encounter in this paper. A proof can be found in the paper of Sachpazis \cite[Lemma 3.8]{stelios}. We note that we have specialized the lemma to our specific case, and the version in the cited reference is more general.
	\begin{lem}\label{lem: stelios}
		Let $z \geq 2$ be given, and set $P_z =\prod_{p \leq z}p$. Suppose $u \geq 5$. Then there exist two arithmetic functions $\lambda^+,\lambda^-$ such that:
		\begin{enumerate}
			\item $\lambda^\pm(1) = 1, |\lambda^\pm| \leq 1$,
			\item $\supp (\lambda^\pm) \subseteq \{d| P_z : d \leq z^u\}$,
			\item $(1*\lambda^-)(n) \leq \bm{1}_{P^-(n) > z} \leq (1*\lambda^+)(n)$ for all $n \in \N$,
			\item $\sum_{d | P_z} \frac{\lambda^\pm(d)}{d} = \sum_{d | P_z} \frac{\mu(d)}{d} + O\big(u^{-u/2}\big).$ 
		\end{enumerate}
	\end{lem}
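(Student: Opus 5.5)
We would not build a new sieve here. The statement is a special case of the fundamental lemma of sieve theory in dimension $\kappa=1$. The plan is to take the standard combinatorial (Brun / Rosser--Iwaniec $\beta$-sieve) weights at level $D=z^u$ and check properties (1)--(4) one at a time, citing the general versions (as in Sachpazis \cite[Lemma 3.8]{stelios}, or Friedlander--Iwaniec's \emph{Opera de Cribro}, Ch.~6) for the quantitative part.

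\emph{Construction and properties (1)--(3).} Every squarefree $d\mid P_z$ with $d>1$ can be written uniquely as $d=p_1p_2\cdots p_r$ with $z\ge p_1>p_2>\cdots>p_r$. Fix $\beta$ (for instance $\beta=10$; any $\beta>1$ works in dimension one). Define
$$\mathcal D^{+}=\{d=p_1\cdots p_r:\ p_1\cdots p_{m-1}p_m^{\beta+1}<D \text{ for all odd } m\le r\},$$
and define $\mathcal D^-$ the same way with ``odd'' replaced by ``even''. Set $\lambda^\pm(d)=\mu(d)\bm 1_{d\in\mathcal D^\pm}$.
\begin{enumerate}
\item $\lambda^\pm(1)=1$ and $|\lambda^\pm|\le 1$ hold by construction.
\item Every $d\in\mathcal D^\pm$ divides $P_z$. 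Taking $m=r$ or $m=r-1$ in the defining inequality gives $d<D=z^u$. This is the support condition.
\item The sign inequalities $(1*\lambda^-)(n)\le \bm 1_{P^-(n)>z}\le(1*\lambda^+)(n)$ come from Buchstab's identity. Write $(1*\lambda^\pm)(n)$ as $\bm 1_{(n,P_z)=1}$ plus a sum, over the truncation points, of terms of the form $\pm\bm 1_{(n/p_1\cdots p_m,\,P_{p_m})=1}$. In the upper sieve the discarded terms carry sign $+$, and in the lower sieve they carry sign $-$, because $\mu(p_1\cdots p_m)=(-1)^m$ and truncation happens only at odd (respectively even) $m$. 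This is the classical argument and is purely combinatorial.
\end{enumerate}

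\emph{Property (4).} Here we apply the fundamental lemma with the multiplicative density $g(p)=1/p$, which has dimension $\kappa=1$ by Mertens' theorem, and sieve parameter $s=\log D/\log z=u$. Writing $V(z)=\prod_{p\le z}(1-1/p)=\sum_{d\mid P_z}\mu(d)/d$, it gives
$$\sum_{d\mid P_z}\frac{\lambda^\pm(d)}{d}=V(z)\Big(1+O\big(e^{-u(\log u-\log\log 3u-2)}\big)\Big).$$
Two observations finish the argument. First, $V(z)\asymp 1/\log z\ll 1$, so the relative error becomes an absolute error. Second, $e^{-u(\log u-\log\log 3u-2)}\ll u^{-u/2}$ for $u\ge 5$: for large $u$ this holds because $\log u-\log\log 3u-2\ge\frac12\log u$ eventually, and for bounded $u$ it is absorbed into the implied constant.

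\emph{Main obstacle.} The delicate step is the quantitative error in (4). The soft form of the fundamental lemma only gives an error of $O(e^{-u})$. To get $u^{-u/2}$ we need the sharper iterative analysis of the $\beta$-sieve functions $F,f$: one shows inductively, using the delay-differential equations and the Buchstab recursion, that $F(s)-1$ and $1-f(s)$ are $\ll s^{-s(1+o(1))}$, uniformly in $z$. We would import this estimate from the literature rather than reprove it.
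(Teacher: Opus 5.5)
Your proposal is correct and is essentially what the paper does. The paper gives no proof of its own and imports this form of the fundamental lemma from Sachpazis (Lemma 3.8 there). Your $\beta$-sieve weights, the Buchstab-type sign argument for (3), and the passage from the relative error $V(z)(1+O(\cdot))$ to the absolute error in (4) via $V(z)\ll 1$ are all sound.

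One point needs adjusting. The $u^{-u/2}$ bound should be imported directly in its uniform finite-$z$ form, as Sachpazis's lemma states it or as Brun/Rankin-trick proofs of the fundamental lemma give it. Routing it through the decay of the limiting functions $F,f$ is not enough, because the standard theorems comparing the finite sieve sums with $V(z)F(s)$ and $V(z)f(s)$ carry an error term that is not $O(u^{-u/2})$ uniformly in $z$.
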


	\subsection{Exponential Sums}
	For the proofs of all our theorems, we require estimates on exponential sums over different sifted sets. The results stated in this subsection accomplish this.\footnote{\Cref{thm: exp sum primes} is stated in \cite{chen1985estimation_chinese} with $y=2$. If $y<q$, then if suffices to use the trivial bound $\sum_{p\le y} e(\theta p)\ll q$, which is smaller than the term $x\sqrt{q/x}$ on the right hand side. On the other hand, if $y\ge q$, then we use twice the original Chen result. We note also that the proofs of our main theorems still work if \Cref{thm: exp sum primes}~is replaced with the classical estimate of Vinogradov on exponential sums over primes, but doing so would result in a quantitatively weaker version of \Cref{thm: bounded moments}, and hence the diophantine condition \eqref{eq:sound-xu badness approx} would have to be strengthened correspondingly.}
	\begin{lem}[Chen, 1985 \cite{chen1985estimation_chinese}]\label{thm: exp sum primes}
		Suppose $\theta = \frac{a}{q} + \frac{\beta}{q^2},$ where $|\beta| \leq 1$. For all sufficiently large $x$, one then has:
		$$\bigg| \sum_{y < p \leq x}e(\theta p) \bigg| \ll x (\log x)^{3/4 } (\log \log x) \bigg( \sqrt{\frac{1}{q} + \frac{q \log q}{x}} + \sqrt{\log q} \exp \big(-\frac{1}{2} \sqrt{\log x}\big) \bigg) + q.$$
	\end{lem}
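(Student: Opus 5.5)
The paper cites this lemma from Chen and adds a reduction in the footnote, so my plan is to reduce the general case to Chen's published estimate, which is the case $y = 2$, and not to reprove the exponential sum bound. Write $R(x) := x (\log x)^{3/4}(\log\log x)\big(\sqrt{1/q + q\log q/x} + \sqrt{\log q}\,\exp(-\tfrac12\sqrt{\log x})\big)$, so that Chen's theorem gives $\big|\sum_{p \le x} e(\theta p)\big| \ll R(x) + q$ for all sufficiently large $x$. I will treat two cases according to the size of $y$ relative to $q$; we may assume $y < x$, since otherwise the sum is empty.

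\emph{Case $y < q$.} I write
$$\sum_{y<p\le x} e(\theta p) = \sum_{p\le x} e(\theta p) - \sum_{p \le y} e(\theta p).$$
The first sum is $\ll R(x) + q$ by Chen's bound. The second is trivially at most $\pi(y) \le y < q$ in absolute value. This is absorbed by the $+q$ term, so the claimed bound holds.

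\emph{Case $y \ge q$.} I use the same decomposition but bound both sums by Chen's result, giving $\ll R(x) + R(y) + 2q$. I need $R(y) \ll R(x)$ for $y \le x$, and this is where the argument requires care.
\begin{itemize}
\item The term $y\sqrt{q\log q/y} = \sqrt{yq\log q}$ is increasing in $y$, as is $y\sqrt{1/q}$.
\item The factor $y\exp(-\tfrac12\sqrt{\log y})$ is eventually increasing in $y$.
\item The factor $(\log y)^{3/4}\log\log y$ is increasing.
\end{itemize}
Each piece of $R(y)$ is therefore dominated by the corresponding piece of $R(x)$ once $y$ is large. There is one subtlety: Chen's estimate is stated only for sufficiently large arguments, say beyond some $x_0$. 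If $y \le x_0$, the sum over $p \le y$ is $O(1)$ and this is absorbed trivially. If $y > x_0$, the monotonicity above applies.

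Combining the two cases gives the stated bound uniformly in $y$. I do not expect any genuine obstacle, since the real content is Chen's theorem itself, which I take as given. The only point to check carefully is the monotonicity of $R(\cdot)$, which lets the bound at $y$ be absorbed into the bound at $x$.
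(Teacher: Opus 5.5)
Your proposal is correct and follows the same route as the paper's footnote: split into $y<q$, where the trivial bound on $\sum_{p\le y}e(\theta p)$ is absorbed into the $+q$ term, and $y\ge q$, where Chen's $y=2$ estimate is applied twice. Your monotonicity check, showing $R(y)\le R(x)$ for $y\le x$ beyond an absolute threshold, supplies the detail the paper leaves implicit.
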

	
	\begin{lem}[{Koukoulopoulos, \cite[Theorem 23.5]{koukoulopoulos} }]\label{lem: dimitris exp sums}
		Let $f$ be supported on $[1,y]$, $v \geq 0,$  $x \geq 2,$ $\alpha \in \R$ and $a/q$ be a reduced fraction with $|\alpha -a/q| \leq 1/q^2$. Then 
		$$\sum_{n \leq x} (f * \log^v)(n) e(n \alpha) \ll \Big(y + \frac{x}{q} + q\Big)(\log x)^{v+1} ||f||_\infty.$$
	\end{lem}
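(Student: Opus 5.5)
We will prove the bound by opening the Dirichlet convolution and reducing to a classical Vinogradov-type estimate for $\sum_m \min(x/m, 1/\|m\alpha\|)$.

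\textbf{Preliminary reductions.} The trivial bound
\[
\sum_{n\le x}|(f*\log^v)(n)| \le \|f\|_\infty(\log x)^v\sum_{n\le x}\tau(n)\ll \|f\|_\infty\, x(\log x)^{v+1}
\]
already implies the claim whenever $q\ge x$ or $y\ge x$. So we may assume $q<x$, which gives $\log q\ll\log x$, and $y<x$.

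\textbf{Opening the convolution.} Write
\[
\sum_{n\le x}(f*\log^v)(n)e(n\alpha)=\sum_{m\le y}f(m)\sum_{k\le x/m}(\log k)^v e(mk\alpha).
\]
For fixed $m$, the inner sum is a geometric sum twisted by the monotone weight $(\log k)^v\le(\log x)^v$. Partial summation, together with the bound $\sum_{k\le K}e(k\beta)\ll\min(K,1/\|\beta\|)$, gives
\[
\sum_{k\le x/m}(\log k)^v e(mk\alpha)\ll(\log x)^v\min\!\Big(\frac{x}{m},\frac{1}{\|m\alpha\|}\Big).
\]
It therefore suffices to show
\[
\sum_{m\le y}\min\!\Big(\frac{x}{m},\frac{1}{\|m\alpha\|}\Big)\ll\Big(y+\frac{x}{q}+q\Big)\log x .
\]

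\textbf{Blocks of length $q$.} Split $1\le m\le y$ into blocks $I_j=(jq,(j+1)q]$ for $0\le j\le y/q$.

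For $j\ge1$, write $m=jq+r$ with $1\le r\le q$. Then
\[
m\alpha=j q\alpha+\frac{ra}{q}+\frac{r\beta}{q^2},\qquad |\beta|\le1,
\]
so the error term satisfies $|r\beta/q^2|\le 1/q$. Since $(a,q)=1$, the residues $ra \bmod q$ run over all classes exactly once as $r$ varies. Hence the points $m\alpha$ modulo $1$ are $1/q$-perturbations of a fixed shift of $\{0,1/q,\dots,(q-1)/q\}$.

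At most $O(1)$ values of $m$ in the block have $\|m\alpha\|\le 2/q$. For these we use the bound $x/m\le x/(jq)$. For the remaining $m$, $1/\|m\alpha\|$ is at most a constant times $q/i$ for distinct $i\ge1$, so they contribute $\ll q\log q$. Summing over $1\le j\le y/q+1$ gives
\[
\ll \frac{x}{q}\log y+\Big(\frac{y}{q}+1\Big)q\log q .
\]

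For the initial block, $m\le q$, the perturbation is smaller. For $m\le q/2$ we have $|m\beta/q^2|\le 1/(2q)$, while $ma/q\notin\Z$. Hence
\[
\|m\alpha\|\ge\|ma/q\|-\frac{1}{2q}\ge\frac12\|ma/q\|.
\]
Since the values $ma \bmod q$ are distinct, these $m$ contribute $\ll q\log q$. The range $q/2<m\le q$ is handled exactly like a block with $j\ge1$, using $x/m\le 2x/q$ for the $O(1)$ exceptional terms. Together with $\log q,\log y\ll\log x$, this yields the claimed bound.

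\textbf{Main obstacle.} The delicate point is the small-$m$ range. There the crude bound $x/m$ would cost up to $x$, for example at $m=1$ when $\alpha\approx 1/q$. It must be avoided by the $1/\|m\alpha\|\ll q/\|ma/q\|$ estimate above. The rest is bookkeeping of logarithms.
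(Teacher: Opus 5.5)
Your proof is correct, with only cosmetic fixes needed: in a block, each $i$ is hit $O(1)$ times rather than exactly once, and $q\log q$ should read $q\log 2q$ so that $q=1$ is covered. The paper gives no proof of its own here; it simply quotes Koukoulopoulos's Theorem 23.5, and your argument is the standard Vinogradov-type route to that result: open the convolution, apply partial summation in $k$ to reduce to $(\log x)^v\sum_{m\le y}\min(x/m,1/\|m\alpha\|)$, then bound that sum by blocks of length $q$, treating $m\le q/2$ separately.
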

	
	The next theorem gives bounds for exponential sums over rough numbers. Although this result is proved by standard methods, it seems to be new in the sense that an estimate for exponential sums over rough numbers does not appear in the literature, as far as the author is aware. We also remark that the bound can likely be strengthened substantially in some regimes (especially large $z$), and even extended to multiplicative coefficient weights. We do not pursue this here.
	\begin{thm} \label{exp sum rough}
		Let $x \geq 2$, and suppose $\theta = \frac{a}{q} + \frac{\beta}{q^2},$ where $|\beta| \leq 1$ and $1 \leq q \leq x$. We have uniformly for $z \in [2,x/2]$ that 
		$$\sum_{\substack{n \leq x \\ P^-(n) > z}} e(\theta n) \ll x \exp\bigg(-\frac{1}{3}\sqrt{\log x}\bigg) +\bigg(\frac{x}{\sqrt{q}} + q\bigg)(\log x)^{5/2}.$$
	\end{thm}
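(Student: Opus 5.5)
We split according to the size of $z$ relative to $x$. Put $z_0 := \exp(\sqrt{\log x})$ (any threshold of this shape will do).

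\emph{Large $z$, i.e.\ $z > z_0$.} Every $n\le x$ with $P^-(n)>z$ is either $1$ or a prime in $(z,x]$, or it has at most $\log x/\log z \le \sqrt{\log x}$ prime factors, all larger than $z$. For the prime part, $\sum_{z<p\le x}e(\theta p)$, we invoke \Cref{thm: exp sum primes} with $y=z$. This gives
$$x(\log x)^{3/4}\log\log x\,\Big(q^{-1/2}+\sqrt{q\log q/x}\Big)+q+x\exp\big(-\tfrac12\sqrt{\log x}\big)\sqrt{\log q}\,(\log x)^{3/4}\log\log x,$$
and since $q\le x$ this is already within the claimed bound. The composite part is handled by a Vaughan/bilinear decomposition. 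Write $n=pm$ with $p=P^-(n)>z$ and $P^-(m)\ge p$, and split $m$ dyadically. For each dyadic piece, use the standard type~II estimate: Cauchy--Schwarz in $m$, then expand the square over pairs $p,p'$, and sum geometric series in $m$ using $\min(M,\|\theta(p-p')\|^{-1})$ together with the rational approximation $a/q$. This yields a bound of the shape $(x/\sqrt q+\sqrt{xq}+x/\sqrt{z})(\log x)^{O(1)}$. The term $x/\sqrt z\le x\exp(-\tfrac12\sqrt{\log x})$ is acceptable, and the terms with $q$ fit under $(x/\sqrt q+q)(\log x)^{5/2}$ once we use $\sqrt{xq}\le x/\sqrt q+q$. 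The condition $P^-(m)\ge p$ couples the variables; we remove this coupling by Perron or by a dyadic splitting of $p$ with a smooth cutoff, at the cost of logs.

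\emph{Small $z$, i.e.\ $z\le z_0$.} Here we apply the fundamental lemma, \Cref{lem: stelios}, with $u=\sqrt{\log x}/(2\log z)$, so that $z^u\le x^{1/2}$ (if $u<5$ we shrink the threshold instead). Write $\bm 1_{P^-(n)>z}=(1*\lambda^\pm)(n)+E(n)$, with $0\le \pm E\le (1*\lambda^+)-(1*\lambda^-)$. Then
$$\sum_{n\le x}(1*\lambda^\pm)(n)e(\theta n)=\sum_{d\le z^u}\lambda^\pm(d)\sum_{m\le x/d}e(\theta dm)\ll \sum_{d\le \sqrt x}\min\big(x/d,\|\theta d\|^{-1}\big)\ll (x/q+\sqrt x+q)\log x,$$
by the classical lemma (equivalently, by \Cref{lem: dimitris exp sums} with $v=0$, $f=\lambda^\pm$ and $y=z^u$). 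The error term $E$ is bounded by positivity:
$$\sum_{n\le x}\big((1*\lambda^+)-(1*\lambda^-)\big)(n)=x\sum_{d}\frac{\lambda^+(d)-\lambda^-(d)}{d}+O(z^u)\ll x\,u^{-u/2}\prod_{p\le z}(1-1/p)^{-1}\cdot(\ldots).$$
This step uses property (4) of \Cref{lem: stelios}, which gives a difference of size $O(u^{-u/2})$, so the error is $\ll x u^{-u/2}+z^u$. With $u\asymp\sqrt{\log x}/\log z$ we get $u^{-u/2}\ll\exp(-\tfrac13\sqrt{\log x})$ when $z$ is bounded, but this fails near $z_0$. So the threshold must be placed so that the two regimes overlap, for instance $z\le \exp((\log x)^{1/3})$ for the sieve and the bilinear method above that. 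The bilinear bound $x/\sqrt z$ then gives only $x\exp(-\tfrac12(\log x)^{1/3})$, which is too weak.

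\emph{Main obstacle.} The difficulty is matching the two regimes to reach exactly $\exp(-\tfrac13\sqrt{\log x})$. To do this I would refine the large-$z$ case. Instead of losing $x/\sqrt z$ from the short type~II range, I would use Buchstab's identity
$$\bm 1_{P^-(n)>z}=\bm 1_{P^-(n)>w}+\sum_{z<p\le w}\bm 1_{p\mid n,\ P^-(n)=p},$$
with $w=z_0$ fixed. The term $\bm 1_{P^-(n)>w}$ is treated with the primes-plus-few-factors argument; its type~II losses are then $x/\sqrt{w}=x\exp(-\tfrac12\sqrt{\log x})$. The Buchstab sum over primes $p\in(z,w]$ consists of type~I sums with small moduli, so the Fundamental Lemma can be applied to $m$ at level $p$. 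The $\exp(-\tfrac13\sqrt{\log x})$ term absorbs the Siegel--Walfisz-type losses inherited from \Cref{thm: exp sum primes}, and the $(\log x)^{5/2}$ collects the logarithmic losses from Cauchy--Schwarz and the dyadic decompositions.
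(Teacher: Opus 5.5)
Your overall split is the paper's: the fundamental lemma for small $z$, and Chen's bound plus a type~II estimate for large $z$. As written, though, the argument does not close. The ``main obstacle'' is handed to a Buchstab refinement that is only gestured at, and the obstacle itself comes from your choice $u=\sqrt{\log x}/(2\log z)$. That choice caps the sieve level at $z^u=\exp(\tfrac12\sqrt{\log x})$. With this $u$, the requirement $u^{-u/2}\ll\exp(-\tfrac13\sqrt{\log x})$ amounts to $u\ge z^{4/3}$, which already fails for $z\approx(\log x)^{1/2}$. So even your fallback threshold $\exp((\log x)^{1/3})$ does not work.

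The type~I bound you wrote down, $\sum_{d\le\sqrt x}\min(x/d,\|\theta d\|^{-1})\ll(x/q+\sqrt x+q)\log x$, tolerates sieve level $\sqrt x$. So take $u=\log x/(2\log z)$, i.e.\ $z^u=\sqrt x$, as the paper does. For every $z\le\exp(\sqrt{\log x})$ one then has $u\ge\frac12\sqrt{\log x}$ ($\ge 5$ for large $x$), hence
\[
xu^{-u/2}\le x\exp\Big(-\tfrac14\sqrt{\log x}\,\log\big(\tfrac12\sqrt{\log x}\big)\Big)\ll x\exp\big(-\tfrac13\sqrt{\log x}\big),
\]
while the remainder $O(z^u)=O(\sqrt x)$ is negligible. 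The two regimes then meet exactly at $z_0=\exp(\sqrt{\log x})$. There your bilinear loss $xz^{-1/2}(\log x)^{O(1)}$ is already $\ll x\exp(-\tfrac13\sqrt{\log x})$, so no Buchstab step is needed. If you did pursue it and handled each $p\in(z,w]$ separately through a rational approximation to $\theta p$, the type~I bounds would add up to roughly $\pi(w)(x/q+q)\log x$, which is far too large.

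Two further points.

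\textbf{The inequality $\sqrt{xq}\le x/\sqrt q+q$.} This is false for $\sqrt x<q<x$ (take $q=x^{3/4}$). For the same reason, the $\sqrt{xq\log q}$ term from \Cref{thm: exp sum primes} is not ``already within the claimed bound''. These terms cannot be absorbed this way. The paper's own large-$z$ estimate also ends with $x(\log x)^{5/2}\sqrt{1/q+q/x}$. So the bound either argument actually delivers has $\sqrt{xq}$ in place of $q$; this costs nothing when $q\le\sqrt x$.

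\textbf{The composite part.} The paper does not factor $n=pm$ with $p=P^-(n)$; it uses Ramar\'e's identity. Its weights $\beta_m(x)=1/(1+\omega(m,\sqrt x))$ depend on $m$ alone. After Cauchy--Schwarz in $m$ and expanding over $p_1,p_2$, the only remaining coupling, $mp\le x$, just truncates $m$ to an interval, and the logarithms come out as $(\log x)^{5/2}$. Your constraint $P^-(m)\ge p$ is not an interval condition on $m$, so it must be removed before Cauchy--Schwarz. One way is to view it as the cutoff $p\le P^-(m)$ and expand that in additive characters, at the cost of a further $\log x$. Perron's formula is not the tool for this, and ``$(\log x)^{O(1)}$'' does not establish the exponent $5/2$.
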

	\begin{proof}
		We first deal with the case when $z \leq \exp(\sqrt{\log x})$. Write $P_z = \prod_{p \leq z}p.$ \Cref{lem: stelios} implies that for any $u \geq 5$ there exist weights $\lambda^\pm(d)$ such that $|\lambda^\pm(d)| \leq 1$,
		$$
		\supp(\lambda^\pm) \subseteq \{d \mid P_z : d \leq z^u\},
		$$
		$$
		(1*\lambda^-)(n) \leq \bm{1}_{(n,P_z)=1} \leq (1*\lambda^+)(n),
		$$
		and
		\begin{equation}\label{eq: stelios 1}
			\sum_{d \mid P_z} \frac{\lambda^{\pm}(d)}{d}
			=
			\sum_{d \mid P_z} \frac{\mu(d)}{d}
			+
			O\big( u^{-u/2} \big).
		\end{equation}
		We write
		\begin{equation}\label{eq: stelios 2}
			\sum_{\substack{n \leq x \\ P^-(n)>z}} e(\theta n)
			=
			\sum_{n \leq x} e(\theta n)
			\big(\bm{1}_{(n,P_z)=1}-(1*\lambda^-)(n)\big)
			+
			\sum_{n \leq x} e(\theta n)(1*\lambda^-)(n).
		\end{equation}
		We bound each sum separately. For the first sum, since
		$$
		(1*\lambda^-)(n) \leq \bm{1}_{(n,P_z)=1} \leq (1*\lambda^+)(n),
		$$
		we have
		$$
		\left|\bm{1}_{(n,P_z)=1}-(1*\lambda^-)(n)\right|
		\leq
		(1*\lambda^+)(n)-(1*\lambda^-)(n).
		$$
		Therefore
		$$
		\left|
		\sum_{n \leq x} e(\theta n)
		\big(\bm{1}_{(n,P_z)=1}-(1*\lambda^-)(n)\big)
		\right|
		\leq
		\sum_{n \leq x}
		\big((1*\lambda^+)(n)-(1*\lambda^-)(n)\big).
		$$
		Opening the convolutions and changing the order of summation gives
		\begin{align*}
			\sum_{n \leq x}
			\big((1*\lambda^+)(n)-(1*\lambda^-)(n)\big)
			&=
			\sum_{d \leq x}
			(\lambda^+(d)-\lambda^-(d))
			\left\lfloor \frac{x}{d} \right\rfloor  \\
			&=
			x\sum_{d \leq x}\frac{\lambda^+(d)-\lambda^-(d)}{d}
			+
			O\left(\sum_{d \leq x}|\lambda^+(d)-\lambda^-(d)|\right).
		\end{align*}
		Since $\lambda^\pm$ are supported on $d\leq z^u$ and satisfy $|\lambda^\pm(d)|\leq 1$, the error term is $O(z^u)$. Moreover, if we assume that $z^u \leq x$, then by \eqref{eq: stelios 1}, we have
		$$
		\sum_{d \leq x}\frac{\lambda^+(d)-\lambda^-(d)}{d}
		=
		O(u^{-u/2}).
		$$
		Hence
		\begin{equation}\label{eq: rough first error}
			\left|
			\sum_{n \leq x} e(\theta n)
			\big(\bm{1}_{(n,P_z)=1}-(1*\lambda^-)(n)\big)
			\right|
			\ll
			xu^{-u/2}+z^u .
		\end{equation}
		To bound the second term in \eqref{eq: stelios 2}, we apply \Cref{lem: dimitris exp sums} with $y = z^u, f = \lambda^-, v = 0, \alpha = \theta.$ This implies
		\begin{align*}
			\left|
			\sum_{n \leq x} e(\theta n)(1*\lambda^-)(n)
			\right|
			&\ll
			\big(z^u+xq^{-1}+q\big)\log x .
		\end{align*}
		Combining this with \eqref{eq: rough first error}, we obtain
		$$
		\sum_{\substack{n \leq x \\ P^-(n)>z}} e(\theta n)
		\ll
		xu^{-u/2}
		+
		\big(z^u+xq^{-1}+q\big)\log x .
		$$
		Choose $u=\frac{\log x}{2 \log z},$ so that $z^u = \sqrt{x}$, and $x u^{-u/2} \ll x \exp \big(- \frac{1}{3} \sqrt{\log x}\big)$, since we assumed that $z \leq \exp(\sqrt{\log x})$. These bounds imply that 
		$$
		\sum_{\substack{n \leq x \\ P^-(n)>z}} e(\theta n)
		\ll
		x\exp\bigg(- \frac{1}{3} \sqrt{\log x}\bigg)		+
		\big(xq^{-1}+q\big)\log x, \quad z \leq \exp(\sqrt{\log x}).
		$$
		
		It remains to consider the range $z \ge \exp(\sqrt{\log x}).$ Since \Cref{thm: exp sum primes} already covers the case of prime sums, it suffices to consider $z \in [\exp(\sqrt{\log x}), \sqrt{x}].$ Here, we make use of Ramar\'e's identity. Define
		$$\beta_n(x) := \frac{1}{1+\omega(n,\sqrt{x})},$$
		where $\omega(n,t)$ denotes the number of distinct $p|n$ such that $p \leq t$. If $n \in (1,x]$ is squarefree, one has
		$$\sum_{\substack{p \leq \sqrt{x} \\ p | n}} \beta_{n/p}(x) = \begin{cases}
			1 \quad \text{if } n \text{ has a prime factor }\leq \sqrt{x}\\
			0 \quad \text{if } n \text{ is prime with } \sqrt{x} < n \leq x
		\end{cases}.$$
		Thus, for any sequence $\{a_n\}$ of complex numbers, one has 
		\begin{equation}\label{eq: ramare summed}
			\sum_{\sqrt{x} < p \leq x}a_p = \sum_{\substack{1< n \leq x \\ \mu^2(n) = 1}} a_n - \underset{ \substack{p \leq \sqrt{x},\;mp\leq x \\ (m,p) = 1 \\ \mu^2(m) = 1}}{\sum \sum}\beta_m(x)a_{mp}.
		\end{equation}
		If we set $a_n := \bm{1}_{(n,P_z)=1}e(\theta  n),$ then \eqref{eq: ramare summed} becomes
		\begin{equation}\label{eq: 1}
			\sum_{\sqrt{x} < p \leq x}e(\theta p) = \sum_{\substack{1< n \leq x \\ (n,P_z)=1 \\ \mu^2(n) = 1}} e(\theta n) - \underset{ \substack{z< p \leq \sqrt{x},\;mp\leq x \\ p \nmid m,\; (m,P_z) = 1 \\ \mu^2(m) = 1}}{\sum \sum}\beta_m(x)e(\theta mp).
		\end{equation}
		
		In the first sum on the right hand side of the above equation, one can drop the condition $\mu^2(n) = 1$ at the cost of a certain error. Indeed,
		\begin{align*}
			\sum_{\substack{1< n \leq x \\ (n,P_z)=1 \\ \mu^2(n) = 1}} e(\theta n) &= \sum_{\substack{1< n \leq x \\ (n,P_z)=1}} e(\theta n) + O\bigg(\sum_{p > z} \sum_{\substack{m \leq x/p^2 \\ P^-(m) > z}}1\bigg)\\
			&= \sum_{\substack{1< n \leq x \\ (n,P_z)=1}} e(\theta n) + O\bigg(\frac{x}{z \log z}\bigg).
		\end{align*}
		Substituting these into \eqref{eq: 1} and rearranging the equality gives
		\begin{equation}\label{eq: 2}
			\bigg|\sum_{\substack{n \leq x \\ P^-(n) > z}} e(\theta n) \bigg| \le \bigg|\sum_{\sqrt{x} < p \leq x}e(\theta p)\bigg| + \bigg| \underset{ \substack{z< p \leq \sqrt{x},\;mp\leq x \\ (m,P_z) = 1 \\ \mu^2(m) = 1}}{\sum \sum}\beta_m(x)e(\theta mp) \bigg| + O\bigg(\frac{x}{z \log z}\bigg).
		\end{equation}
		We bound each sum on the right hand side individually. By \Cref{thm: exp sum primes}, we immediately have an appropriate bound on the sum over $p$. To bound the second (type II) sum, we decompose the $m$ and $p$ sums into dyadic ranges where $m \sim M$ and $p \sim P$, with $MP \leq x$. For ease of notation, write $\alpha_m = \mu^2(m) \bm{1}_{P^-(m) > z} \beta_m(x).$ For a given dyadic box, the sum can be written as 
		
		$$\CB(M,P) = \sum_{\substack{m \sim M }} \alpha_m \sum_{p \sim P}e(\theta m p).$$
		By Cauchy-Schwarz, we have
		$$|\CB(M,P)|^2 \leq \bigg(\sum_{\substack{m \sim M}} |\alpha_m|^2\bigg)\times\bigg( \sum_{m \sim M}\bigg|\sum_{p \sim P}e(\theta m p)\bigg|^2\bigg).$$
		Since $\beta_m(x) \ll 1$ uniformly in $m,x$, the first factor above can be bounded as
		$$\sum_{\substack{m \sim M}} |\alpha_m|^2 \ll \sum_{\substack{
				m \sim M \\ P^-(m) > z}} 1 \ll \frac{M}{\log z}.$$
		To bound the second factor, we use the fact that $|z|^2 = z \overline{z}$ to write
		
		\begin{align*}
			\sum_{m \sim M}\bigg|\sum_{p \sim P}e(\theta m p)\bigg|^2 &=  \sum_{m \sim M}\sum_{p_1,p_2 \sim P}e(\theta m(p_1-p_2))\\
			&= \sum_{m \sim M}\sum_{p \sim P}1 + \sum_{\substack{p_1,p_2 \sim P \\ p_1 \ne p_2}} \min(M,\|\theta(p_1-p_2)\|^{-1})\\
			&\ll \frac{MP}{\log P} + \sum_{k \leq P}\gamma_{k} \min\{M,\|\theta k\|^{-1}\},
		\end{align*}
		where $\gamma_{k} := \#\{p_1,p_2 \leq P: p_1-p_2=k\}.$ We have $\gamma_k \ll \pi(P)$, and so the prime number Theorem implies that $\gamma_k \ll \frac{P}{\log{ P}}.$ It is known (see \cite[p. 346]{iwaniec}) that
		$$\sum_{k \leq P}\min\{M, \|\theta k\|^{-1}\} \ll \left(M+P+\frac{MP}{q}+q\right)\log q.$$
		Using these bounds, we find
		\begin{align*}
			\sum_{m \sim M}\bigg|\sum_{p \sim P}e(\theta m p)\bigg|^2 &\ll \frac{MP }{\log P} + \left(MP+P^2+\frac{MP^2}{q}+qP\right)\frac{\log q }{\log P}
		\end{align*}
		Therefore if we combine our estimates and use the fact that $MP \leq x$, we get
		\begin{eqnarray*}|\CB(M,P)|^2 &\ll& \bigg(\frac{M}{\log z}\bigg)\times\left[\frac{MP }{\log P} +  \left(MP+P^2+\frac{MP^2}{q}+qP\right)\frac{\log q }{\log P}\right]  \\
			&\ll& \frac{x}{\log z} \times \left[\frac{M}{\log z}+\left(M+P+\frac{x}{q}+q\right)\frac{\log q}{\log P}\right]  
		\end{eqnarray*}
		Note that there are at most $(\log x)^2$ dyadic intervals to sum over, and so by using a supremum bound, we find that
		\begin{align*}
			\bigg| \underset{ \substack{z< p \leq \sqrt{x},\;mp\leq x \\ p \nmid m,\; (m,P_z) = 1 \\ \mu^2(m) = 1}}{\sum \sum}\beta_m(x)e(\theta mp) \bigg| \ll (\log x)^2 \sup_{\substack{M \ll x/z \\ z \ll P \ll \sqrt{x}}}|\CB(M,P)|  \ll x\frac{(\log x)^{5/2}}{(\log z)^{3/2}}\left(\frac{1}{z}+\frac{1}{q}+\frac{q}{x}\right)^{1/2}
		\end{align*}
		Combining the estimates for our Type I and Type II sums back into \eqref{eq: 2}, we end up with the estimate
		$$\bigg|\sum_{\substack{n \leq x \\ P^-(n) > z}} e(\theta n) \bigg| \ll x(\log x)^{5/2} \bigg( \sqrt{\frac{1}{q} + \frac{q}{x}} + x^{-1/5}+z^{-1/2}\bigg).$$
		We note that the term $O(x/z\log z)$ is omitted since it is smaller than the last term of the above expression. Since we are in the range $z \in (\exp(\sqrt{\log x}), \sqrt{x}],$ we conclude that 
		$$\bigg|\sum_{\substack{n \leq x \\ P^-(n) > z}} e(\theta n) \bigg| \ll x\exp \bigg( - \frac{1}{3} \sqrt{\log x} \bigg) + x(\log x)^{5/2} \bigg( \sqrt{\frac{1}{q} + \frac{q}{x}} \bigg), \quad z \in (\exp(\sqrt{\log x}), \sqrt{x}].$$
		Combining this with the estimate in the case when $z \leq \exp(\sqrt{\log x})$ completes the proof of the theorem.
	\end{proof}
	
	The next theorem is a slight modification of the ideas presented in \cite{granville breteche exp sums}. The main takeaway is that one can save an arbitrary power of $\log x$ for exponential sums over smooth and/or rough numbers, so long as $q$ is not too small.
	\begin{thm} \label{thm: exp sum smooth}
		Let $A > 1$ be given. Uniformly for $3/2 < z \leq y \leq x$, all real $\theta$ and all $a,q$ with $(a,q)=1$, $q \leq x$, and $|\theta -a/q| \leq 1/q^2$, we have
		$$\bigg|\sum_{\substack{n \leq x \\ P^+(n) \leq y \\ P^-(n) > z}} e(\theta n)\bigg| \ll \frac{x}{(\log x)^{A}} +x (\log x)^{2+A} \bigg(\frac{1}{\sqrt{q}}+\sqrt{\frac{q}{x}} + \exp\bigg(-\frac{1}{3}\sqrt{\log x}\bigg) \bigg). $$
	\end{thm}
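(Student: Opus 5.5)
The plan is to follow the de la Bretèche--Granville strategy: decompose by the largest prime factor and reduce to bilinear (type II) sums, with the smooth part controlled by \Cref{lem: smooth divisor sums}.

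\textbf{Trivial ranges.} Write $T := \exp(\sqrt{\log x})$ and $\mathcal{S}(x) := \{n \le x : P^+(n)\le y,\ P^-(n)>z\}$. If $q \le (\log x)^{2A}$ or $q \ge x/(\log x)^{2A}$, the term $x(\log x)^{2+A}\big(q^{-1/2} + (q/x)^{1/2}\big)$ already exceeds $x$, so the bound is trivial. We may therefore assume $(\log x)^{2A} < q < x/(\log x)^{2A}$.

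\textbf{Removing small-$P^+$ integers.} Each $n \in \mathcal{S}(x)$ with $n>1$ is written uniquely as $n = mp$ with $p = P^+(n)$, so that $P^+(m)\le p$ and $P^-(m)>z$. Integers whose largest prime factor is below $\exp((\log x)^{\epsilon})$, for a small $\epsilon$, are $\Psi$-smooth. Their number is $\ll x/(\log x)^{A}$ by standard smooth-number estimates, or alternatively by Rankin's trick exactly as in the proof of \Cref{lem: smooth divisor sums}. These terms are discarded.

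\textbf{Large $p$.} In the range $p > x/T$, we have $m \le T$. For each fixed $m$ the inner sum over $p$ is $\sum_{p \le \min(y,\,x/m),\ p\ge \max(z,P^+(m))} e(\theta m p)$, which we bound via \Cref{thm: exp sum primes} after finding a rational approximation to $m\theta$. Since this is delicate, I would instead treat this range with the same type II method as below, choosing $M$ small. Alternatively, I would swap roles so that both variables are long.

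\textbf{Bilinear sums (main step).} In the remaining range we split $m \sim M$ and $p\sim P$ dyadically, with $MP \le x$ and $P \ge \exp((\log x)^{\epsilon})$. The cross-condition $P^+(m) \le p$ couples the two variables. I would remove it either by a Perron/Fourier separation in $\log p$ versus $\log P^+(m)$, which costs a power of $\log x$, or by the simple trick of fixing $p$ in short multiplicative intervals $(P_0, P_0(1+1/(\log x)^{B})]$. The constraint $P^+(m)\le p$ then becomes $P^+(m)\le P_0$ up to boundary terms. These boundary terms are $\ll x/(\log x)^{A}$ by Brun--Titchmarsh together with a trivial bound, and the condition $mp \le x$ is handled similarly.

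We then apply Cauchy--Schwarz exactly as in the proof of \Cref{exp sum rough}, using
$$\sum_{k}\min\{M,\|\theta k\|^{-1}\} \ll \Big(M+P+\frac{MP}{q}+q\Big)\log q.$$
This gives
$$|\CB(M,P)| \ll x(\log x)\Big(\frac1{\sqrt q}+\sqrt{\frac qx}+\frac1{\sqrt M}+\frac1{\sqrt P}\Big)^{\!\cdot},$$
and summing over $\ll (\log x)^{B+2}$ boxes yields the factor $(\log x)^{2+A}$. Boxes with $M$ or $P$ below $\exp(\sqrt{\log x})$ are the ones that need the prime exponential sum bound \Cref{thm: exp sum primes}, which produces the $\exp(-\frac13\sqrt{\log x})$ term. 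When $P$ is small we use instead that $m$ then has all prime factors $\le P$, and this set has size $\ll x/(\log x)^{A}$ by the smooth count.

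\textbf{Main obstacle.} I expect the hardest part to be the range where $p$ is large and $m$ is short. There a rational approximation of $m\theta$ is needed for every $m$, and one must check that $m\theta$ admits denominators comparable to $q/m$ for most $m$. The first thing I would try is the standard remark that $|m\theta - ma/q|\le m/q^2$, which gives a denominator $q/(m,q)$ with a weaker error, and is adequate when $m \le T$.
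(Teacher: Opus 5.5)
\textbf{The route differs from the paper's.} Your route is a genuinely different one: you try to rebuild a type I/type II decomposition from scratch. The paper never opens up the smooth sum.

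For $y\le x/(\log x)^{2A}$ the paper quotes de la Bret\`eche--Granville's bound for $\sum_{n\le x,\,P^+(n)\le y}f(n)e(\theta n)$, taking the $1$-bounded completely multiplicative $f=\bm{1}_{P^-(n)>z}$. Its $\sqrt{xy}$ term is then $\le x/(\log x)^A$.

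For $y>x/(\log x)^{2A}$ the paper writes the sum as the full $z$-rough sum, which \Cref{exp sum rough} handles, minus the non-$y$-smooth part $n=mp$ with $p>y$. There $m\le x/y<(\log x)^{2A}$ is automatically tiny. Only in this regime is \Cref{thm: exp sum primes} applied $m$ by $m$. Before that, a best-approximation argument gives, for every such $m$ (not just most), a Dirichlet-quality approximation to $m\theta$ with denominator $q/m\ll q'\le q$. The raw approximation $am/q$, with error $m/q^2$, is not enough, because \Cref{thm: exp sum primes} needs $|\beta|\le 1$.

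\textbf{The gap: the range of short $m$.} Even with $q'\asymp q/m$, the saving from \Cref{thm: exp sum primes} is only $\sqrt{1/q'}\asymp\sqrt{m/q}$. This is no saving at all once $m\gtrsim q$, and after your trivial reductions $q$ can be as small as $(\log x)^{2A}$. Summing over $m\le T=\exp(\sqrt{\log x})$ gives a bound of order $x(\log x)^{3/4+o(1)}\sqrt{T/q}$. That exceeds the right-hand side of the theorem unless $q$ is larger than $T$ times a power of $\log x$. So the claim that this is ``adequate when $m\le T$'' is false.

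Your fallbacks do not rescue this range. The bilinear bound carries the loss term $1/\sqrt M$, so it gives nothing for bounded $M$. Roles cannot be swapped either, because $m$ is short by construction.

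\textbf{How the decomposition could be repaired.} The hand-off between the two methods must happen at $M_0$ equal to a fixed power of $\log x$, around $(\log x)^{2A+2}$. Two constraints meet there:
\begin{itemize}
\item Summing Chen's bound over $m\le M_0$ stays below $x(\log x)^{2+A}/\sqrt q$ only if $M_0\le(\log x)^{2A+5/2-o(1)}$.
\item The $1/\sqrt M$ term of the type II estimate is $\ll x/(\log x)^A$ only if $M_0$ is at least about $(\log x)^{2A}\log q$.
\end{itemize}
Hitting this window needs sharper bookkeeping than you propose. You must bound each bilinear box relative to its own size $M\pi(I)$; these sizes total only $\ll x\log\log x$ over all boxes. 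Your accounting ($\ll x\log x$ per box, times $(\log x)^{B+2}$ boxes) yields $(\log x)^{B+3}$. That exceeds $(\log x)^{2+A}$ for every $B$ that makes your Brun--Titchmarsh boundary terms $\ll x/(\log x)^A$, since those terms force $B>A-1$.
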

	\begin{proof}
		First of all, Granville and la Bret\`eche \cite[Equation 10.1]{granville breteche exp sums} proved that for a $1$-bounded completely multiplicative function $f$, one has
		$$\bigg|\sum_{\substack{n \leq x \\ P^+(n) \leq y}} f(n)e(\theta n)\bigg| \ll \sqrt{xy} + \bigg(\frac{x}{\sqrt{q}} + \sqrt{xq \log(2x/q)}\bigg)\log y + x e^{-(1+o(1))\sqrt{\log x  \log \log x}}.$$
		We take $f(n) = \bm{1}_{P^-(n) > z}$. In the case when $y \leq x/(\log x)^{2A}$, this implies the bound
		$$\bigg|\sum_{\substack{n \leq x \\ P^+(n) \leq y \\ P^-(n) > z}}e(\theta n)\bigg| \ll \frac{x}{(\log x)^{A}} + \bigg(\frac{x}{\sqrt{q}} + \sqrt{xq}\bigg)(\log x)^{3/2},$$
		where we have omitted the term $x\exp \{ -(1+o(1))\sqrt{\log x \log \log x}\}$ as it is negligible compared to $x/(\log x)^{A}$. It remains to consider the case when $y > x/(\log x)^{2A}$. Note that if $n\le x$ has $P^-(n)>z$ but is not $y$-smooth, then it can factored as $n=pm$ with $p>y$ and $p\ge P^+(m)$. Hence, we write $n = mp$, where $p = P^+(n) \in (y,x]$, $m \leq x/y$, and $P^+(m) \leq p$. This gives
		$$\sum_{\substack{n \leq x \\ P^+(n) \leq y \\ P^-(n) > z}}e(\theta n) =\sum_{\substack{n \leq x \\  P^-(n) > z}} e(\theta n) + O\bigg( \sum_{\substack{m \leq x/y}} \bigg| \sum_{\substack{y < p \leq x \\ P^+(m) < p}}e(\theta m p)\bigg|\bigg).$$ 
		We remark that the condition $P^+(m) < p$ can be omitted, since $P^+(m) \leq m \leq x/y < y < p$. As such, we can deduce from the above that
		\begin{equation}\label{eq: prime exp sum 1}
			\sum_{\substack{n \leq x \\ P^+(n) \leq y \\ P^-(n) > z}}e(\theta n) =\sum_{\substack{n \leq x \\  P^-(n) > z}} e(\theta n) + O\bigg( \sum_{\substack{m \leq x/y}} \bigg| \sum_{\substack{y < p \leq x/m}}e(\theta m p)\bigg|\bigg).
		\end{equation}
		To estimate the prime sum in the error term, we shall use \Cref{thm: exp sum primes}. Write $\alpha_m = \theta m$. We first check that $\alpha_m$ has a good rational approximation with
		denominator essentially $q$, up to a loss depending only on $m$. Put
		$g=(m,q)$. Since $(a,q)=1$, the fraction $am/q$ reduces to
		$$
		\frac{a_m}{q_m}:=\frac{am/g}{q/g},
		\qquad q_m=\frac{q}{g},
		\qquad (a_m,q_m)=1.
		$$
		Moreover,
		$$
		\left|\alpha_m-\frac{a_m}{q_m}\right|
		=
		m\left|\theta-\frac aq\right|
		\leq \frac{m}{q^2}
		=
		\frac{m/g^2}{q_m^2}.
		$$
		Thus $\alpha_m$ is within $B_m/q_m^2$ of a reduced fraction with
		denominator $q_m$, where
		$$
		B_m:=\frac{m}{g^2}\le m.
		$$
		We now replace this approximation by one of Dirichlet quality. Let
		$a'/q'$ be a best approximation to $\alpha_m$ among reduced fractions
		with denominator at most $q_m$. Since $a_m/q_m$ is one of the available
		fractions, we have
		$$
		\left|\alpha_m-\frac{a'}{q'}\right|
		\leq
		\left|\alpha_m-\frac{a_m}{q_m}\right|
		\leq
		\frac{B_m}{q_m^2}.
		$$
		If $a'/q'\ne a_m/q_m$, then
		$$
		\left|\frac{a'}{q'}-\frac{a_m}{q_m}\right|
		\geq \frac{1}{q'q_m},
		$$
		whereas the triangle inequality gives
		$$
		\left|\frac{a'}{q'}-\frac{a_m}{q_m}\right|
		\leq
		\left|\alpha_m-\frac{a'}{q'}\right|
		+
		\left|\alpha_m-\frac{a_m}{q_m}\right|
		\leq \frac{2B_m}{q_m^2}.
		$$
		Hence $q'\ge q_m/(2B_m)$. If instead $a'/q'=a_m/q_m$, then $q'=q_m$.
		Therefore in all cases
		$$
		q'\gg \frac{q_m}{B_m}
		=
		\frac{q/g}{m/g^2}
		=
		\frac{qg}{m}
		\geq \frac{q}{m},
		\qquad\text{and}\qquad
		q'\le q_m\le q.
		$$
		Finally, Dirichlet's Theorem implies that
		$$
		\left|\alpha_m-\frac{a'}{q'}\right|
		\le \frac{1}{(q')^2}.
		$$
		Thus, for every $m\le x/y$, there is a reduced fraction $a'/q'$ such
		that
		$$
		\left|\alpha_m-\frac{a'}{q'}\right|\le \frac{1}{(q')^2},
		\qquad
		\frac{q}{m}\ll q'\le q.
		$$
		Since $m\le x/y<(\log x)^{2A}$ in the present case, this gives
		$$
		q'\gg \frac{q}{(\log x)^{2A}},
		\qquad q'\le q.
		$$
		Returning to \eqref{eq: prime exp sum 1} and applying \Cref{thm: exp sum primes} with our $a',q'$, we find

		\begin{align} \nonumber
			\sum_{\substack{n \leq x \\ P^+(n) \leq y \\ P^-(n) > z}}e(\theta n) &=\sum_{\substack{n \leq x \\  P^-(n) > z}} e(\theta n) +  O\bigg(\sum_{\substack{m \leq x/y}} \bigg[\frac{x}{m} (\log x)^{2} \bigg(\sqrt{\frac{m}{q}+\frac{q}{x}} +\exp(-\frac{1}{2}\sqrt{\log x})\bigg)\bigg] +q \bigg) \\
			&= \sum_{\substack{n \leq x \\  P^-(n) > z}} e(\theta n) +O\bigg(x (\log x)^{2+A} \bigg(\sqrt{\frac{1}{q}+\frac{q}{x}} +\exp(-\frac{1}{2}\sqrt{\log x})\bigg) \bigg) + q (\log x)^{2A}. \label{eq: prime exp sum 2}
		\end{align}
		Applying \Cref{exp sum rough} to the sum in the above expression gives
		\begin{align} 	\sum_{\substack{n \leq x \\ P^+(n) \leq y \\ P^-(n) > z}}e(\theta n) \ll x (\log x)^{2+A} \bigg(\frac{1}{\sqrt{q}} +\frac{q}{\sqrt{x}} +\exp\bigg(-\frac{1}{3}\sqrt{\log x}\bigg)\bigg) .
		\end{align}
		This resolves the case $y > x/(\log x)^A,$ and hence concludes the proof of the theorem.
	\end{proof}

	\section{Probabilistic Foundations}
	In this section, we collect assorted results from probability theory which will be needed for our proofs. We begin by computing asymptotics for $\E|S_g(N)|^2$ and $\E|S_g(N;z)|^2$. We then move on to an overview of how martingale difference sequences can be applied to the study of random multiplicative functions.
	\subsection{Second Moment Calculations}
	Since $g$ is assumed to be a `nice' function, and $\theta$ is chosen such that the sequence $\{\theta n\}$ is uniformly distributed modulo 1, we can give simple asymptotics for $\E|S_g(N)|^2$ and $\E|S_g(N;z)|^2$ in terms of the $L^2$ norm of $g$.
	\begin{lem}\label{lem: variance estimate 1}
		Let $\theta \in \R\setminus \Q$ and $g \in  \CB\CV[\R/\Z]$ be given. Then one has
		$$V_{g}(N) = \sum_{n \leq N}\Big|g\big(\theta n\big)\Big|^2 \sim N \cdot \|g\|_2^2.$$
	\end{lem}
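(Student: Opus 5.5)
The plan is to reduce the claim to Weyl's equidistribution theorem applied to the function $h := |g|^2$. Since $g \in \CB\CV[\R/\Z]$, $g$ is bounded, say $|g| \le M$. Then $h$ is also of bounded variation, because $\big||g(x)|^2 - |g(y)|^2\big| \le 2M|g(x)-g(y)|$ gives $\|h\|_{\CV} \le 2M\|g\|_{\CV}$. A bounded function of bounded variation on $\R/\Z$ is Riemann integrable: its real and imaginary parts are differences of monotone functions, so they have at most countably many discontinuities. Because $\theta$ is irrational, Weyl's criterion shows that $\{\theta n\}$ is equidistributed modulo $1$. The standard consequence is that $\frac1N\sum_{n\le N} F(\theta n) \to \int_0^1 F$ for every Riemann integrable $1$-periodic $F$. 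Applying this to $F = h$ gives $V_g(N)/N \to \int_0^1 |g|^2 = \|g\|_2^2$.

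To keep the argument self-contained, I would prove the equidistribution statement for $h$ by sandwiching. Given $\epsilon>0$, Riemann integrability supplies step functions $s_1 \le h \le s_2$ with $\int(s_2 - s_1) < \epsilon$. For step functions, i.e.\ finite combinations of indicators of intervals, the limiting statement is exactly equidistribution of $\{\theta n\}$. Weyl's criterion gives this from the geometric-series bound $\big|\sum_{n\le N} e(k\theta n)\big| \le \|k\theta\|^{-1}/2$ for $k\ne0$, together with trigonometric approximation of indicators by continuous functions and then by trigonometric polynomials. Taking $\liminf$ and $\limsup$ and letting $\epsilon\to0$ gives the result.

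Finally, the relation $\sim$ requires $\|g\|_2^2 > 0$. This holds for nonzero $g$, and in particular for non-constant $g$, where it is presumably assumed, because the midpoint normalization and bounded variation force $g$ to have one-sided limits everywhere. If $\int|g|^2=0$, then $g=0$ almost everywhere, so the one-sided limits vanish and hence $g \equiv 0$ pointwise. The only mildly delicate point is passing from Fourier-mode equidistribution to Riemann integrable test functions. This is classical, so I expect no real obstacle: no Diophantine condition beyond irrationality is needed, and the midpoint convention plays no role except in this positivity remark.
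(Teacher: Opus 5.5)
Your proof is correct and follows the paper's route: $|g|^2$ is of bounded variation, hence Riemann integrable, and Weyl equidistribution of $\{\theta n\}$ then gives $V_g(N)/N \to \|g\|_2^2$. You add detail the paper omits, namely the step-function sandwich and the check that $\|g\|_2>0$ unless $g\equiv 0$. You also rightly claim only bounded variation for $|g|^2$, rather than the paper's ``so is $|g|^2$'': squaring need not preserve the midpoint normalization in $\CB\CV[\R/\Z]$, though this does not affect the argument.
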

	\begin{proof}
		Since $g \in \CB\CV[\R/\Z]$, then so is $|g|^2$. In particular, $|g|^2$ is Riemann-integrable. Weyl's equidistribution theorem implies that the sequence $\{\theta n\}_{n \in \N}$ is uniformly distributed modulo 1, and so
		$$
		\sum_{n \leq N}\Big|g\big(\theta n\big)\Big|^2 \sim N \int_0^1 |g(t)|^2dt. \qedhere
		$$
	\end{proof}
	\begin{lem}\label{lem: variance estimate 2}
		Let $\theta \in \R\setminus \Q$ be given and suppose it satisfies \eqref{eq:sound-xu badness approx}. Let $g \in \CB\CV[\R/\Z]$ be given. Then, uniformly for $z \in [2,N/2]$, we have:
		$$V_{g}(N;z) = \sum_{\substack{n \leq N \\ P^-(n) >z}}\Big|g\big(\theta n\big)\Big|^2 \sim \Phi(N;z) \cdot \| g\|_2^2,$$
		where $\Phi(N;z) := \#\{n \leq N : P^-(n) > z\}$.
	\end{lem}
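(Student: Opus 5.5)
We prove the lemma by a Weyl-criterion argument along the sifted set $\{n \le N : P^-(n) > z\}$, with \Cref{exp sum rough} supplying the needed equidistribution. Put $h := |g|^2$. Then $h \in \CB\CV[\R/\Z]$ and $h$ is Riemann integrable. The claim is that the normalized counting measures $\Phi(N;z)^{-1}\sum_{n\le N,\,P^-(n)>z}\delta_{\{\theta n\}}$ converge to Lebesgue measure, uniformly in $z \in [2,N/2]$. By the standard Weyl argument, it suffices to show that for each fixed nonzero integer $k$,
\begin{equation*}
\sum_{\substack{n \le N \\ P^-(n) > z}} e(k\theta n) = o\big(\Phi(N;z)\big)
\end{equation*}
uniformly in $z \in [2,N/2]$. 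Indeed, the convergence then holds for all trigonometric polynomials. Sandwiching $h$ (for each of its real and imaginary parts) between trigonometric polynomials $P_\pm$ with $P_- \le h \le P_+$ and $\int (P_+ - P_-) < \epsilon$, as in the proof of Weyl's criterion, gives the result for Riemann integrable $h$. The uniformity in $z$ carries through because the number of Fourier modes used depends only on $\epsilon$ and $g$.

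\textbf{Lower bound for $\Phi$.} We need $\Phi(N;z) \gg N/\log N$ uniformly for $z \le N/2$. For $z \le N^{1/2}$, say, a lower-bound sieve (e.g.\ \Cref{lem: stelios}, or Buchstab/fundamental lemma) gives $\Phi(N;z) \gg N/\log z \ge N/\log N$. For $z \in (\sqrt N, N/2]$, the set contains every prime in $(z,N]$, and the prime number theorem gives $\Phi \gg N/\log N$.

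\textbf{Exponential sum bound.} Fix $k \ne 0$ and apply \Cref{exp sum rough} with $x = N$ and $\theta$ replaced by $k\theta$. This yields
\begin{equation*}
\sum_{\substack{n \le N \\ P^-(n) > z}} e(k\theta n) \ll N\exp\Big(-\tfrac13\sqrt{\log N}\Big) + \Big(\frac{N}{\sqrt q} + q\Big)(\log N)^{5/2},
\end{equation*}
valid for any $q$ with $|k\theta - a/q| \le 1/q^2$. We want to choose $q$ with $(\log N)^{8} \le q \le N/(\log N)^{8}$, so that the right side is $o(N/\log N)$.

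This choice of $q$ is where the Diophantine condition \eqref{eq:sound-xu badness approx} enters. Take Dirichlet's theorem with parameter $Q = N/(\log N)^8$: there is $q \le Q$ with $\|qk\theta\| \le 1/Q$, hence $|k\theta - a/q| \le 1/(qQ) \le 1/q^2$. By \eqref{eq:sound-xu badness approx} applied to $qk$,
\begin{equation*}
C\exp\big(-(q|k|)^{1/127}\big) \le \|qk\theta\| \le \frac{1}{Q}.
\end{equation*}
This forces $(q|k|)^{1/127} \ge \log(CQ)$, i.e.\ $q \gg_k (\log N)^{127}$, which is far more than needed. Then
\begin{equation*}
\frac{N}{\sqrt q}(\log N)^{5/2} \ll N(\log N)^{5/2 - 127/2} \quad\text{and}\quad q(\log N)^{5/2} \le N(\log N)^{-11/2},
\end{equation*}
and both are $o(N/\log N)$. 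The term $N\exp(-\tfrac13\sqrt{\log N})$ is also $o(N/\log N)$. All bounds are uniform for $z \in [2,N/2]$, which completes the argument.

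\textbf{Main obstacle.} Beyond this rational-approximation step, the only delicate point is making the sandwich argument uniform in $z$. This follows because the error is $\epsilon\,\Phi(N;z)$ plus finitely many exponential sums, each $o(\Phi(N;z))$ uniformly in $z$.
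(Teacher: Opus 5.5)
Your argument is correct and is essentially the paper's proof: you sandwich $|g|^2$ between trigonometric polynomials, use Dirichlet's theorem together with \eqref{eq:sound-xu badness approx} to force the denominator $q$ to exceed a large power of $\log N$, bound each nonzero frequency with \Cref{exp sum rough}, and compare against $\Phi(N;z)\gg N/\log N$. The only difference is that the paper uses Beurling--Selberg (Vaaler) polynomials of degree $K=\log N$ and so gets the quantitative rate $1+O_g(1/\log N)$, whereas your fixed-$\epsilon$ Weyl sandwich gives only the qualitative uniform-in-$z$ $o(1)$ statement, which is all the lemma asserts. Also, your sieve step for the lower bound on $\Phi$ is unnecessary: the primes in $(N/2,N]$ already give $\Phi(N;z)\gg N/\log N$ for every $z\le N/2$.
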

	\begin{proof}
		Write $H(t) = |g(t)|^2$. We will make use of Beurling-Selberg polynomials, which are sometimes called Vaaler polynomials in this context. Since $H$ is a periodic and non-negative function, Section 7 of \cite{vaaler} provides a proof that for each natural number $K$, there exist trigonometric polynomials $H_K^\pm$ of degree at most $K$, say $$H^\pm(t) = \sum_{|k| \leq K} c_k^\pm e(k t),$$
		such that:
		\begin{itemize}
			\item $H_K^-(t) \leq H(t) \leq H_K^+(t) \quad (t \in [0,1]),$
			\item $c_0^\pm = \|g\|_2^2 + O_g\big(K^{-1}\big),$
			\item $|c_k^\pm| \ll_g 1.$
		\end{itemize}
		Using these properties, we see that
		\begin{align*}
			\sum_{\substack{n \leq N \\ P^-(n) > z}} |g(\theta n)|^2 \leq \sum_{\substack{n \leq N \\ P^-(n) > z}} H^+(\theta n) 
			&= \sum_{\substack{n \leq N \\ P^-(n) > z}} \sum_{|k| \leq K} c_k^+  e(\theta n k) \\
			&= \bigg[\|g\|_2^2 + O_g\big(K^{-1}\big)\bigg]\sum_{\substack{n \leq N \\ P^-(n) > z}}1 + \sum_{0< |k| \leq K} c_k^+   \sum_{\substack{n \leq N \\ P^-(n) > z}} e(\theta n k).
		\end{align*}
		Set $K = \log N$, and we rewrite the above as
		\begin{align}\label{beurling 1}
			\sum_{\substack{n \leq N \\ P^-(n) > z}} |g(\theta n)|^2 \leq \Phi(N;z) \cdot \|g\|_2^2  + O_g\bigg(\frac{\Phi(N;z)}{K}\bigg) + O\bigg(\sum_{0< |k| \leq K}   \bigg|\sum_{\substack{n \leq N \\ P^-(n) > z}} e(\theta n k)\bigg| \bigg).
		\end{align}
		We find, using Dirichlet's approximation Theorem, integers $a,q$ such that $(a,q) = 1$, $1 \leq q \leq \sqrt{N}$, and $|\theta k - \frac{a}{q} | \leq \frac{1}{q \sqrt{N}}$. Thus, we have
		$$\|\theta k q\| \leq |\theta k q - a| \leq \frac{1}{\sqrt{N}}.$$
		By our diophantine assumption \eqref{eq:sound-xu badness approx} on $\theta$, we also have
		$$\|\theta k q\| \gg \exp(-| kq |^{1/127}).$$
		Combining these two facts gives $\frac{1}{\sqrt{N}} \gg \exp(- |K q|^{1/127} ),$ which can be rearranged as
		$$|q| \gg \frac{(\log N)^{127}}{K} \gg (\log N)^{126}.$$
		With our rational approximation to $\theta k$ in hand, we apply \Cref{exp sum rough} to each $n$ sum in \eqref{beurling 1}. This gives
		\begin{align}\label{beurling 1.5}
			\sum_{\substack{n \leq N \\ P^-(n) > z}} |g(\theta n)|^2 \leq \Phi(N;z) \cdot \|g\|_2^2  + O_g\bigg(\frac{\Phi(N;z)}{K}\bigg) + O\bigg( \frac{K \cdot N}{(\log N)^{126}}  \bigg).
		\end{align}
		Recalling that $K = \log N$ and using the fact that $\Phi(N;z) \gg N/ (\log N)$, the above simplifies as
		\begin{align*}
			\sum_{\substack{n \leq N \\ P^-(n) > z}} |g(\theta n)|^2 \leq \Phi(N;z) \cdot \|g\|_2^2 \cdot \bigg[ 1 + O_g\bigg(\frac{1}{\log N}\bigg)\bigg].
		\end{align*}
		By repeating this argument with the minorant $H_K^-$ in place of $H_K^+,$ one can similarly show that
		\begin{align*}
			\sum_{\substack{n \leq N \\ P^-(n) > z}} |g(\theta n)|^2 \ge \Phi(N;z) \cdot \|g\|_2^2 \cdot \bigg[ 1 + O_g\bigg(\frac{1}{\log N}\bigg)\bigg].
		\end{align*}
		Hence, one has
		\begin{align*}
			\sum_{\substack{n \leq N \\ P^-(n) > z}} |g(\theta n)|^2 = \Phi(N;z) \cdot \|g\|_2^2 \cdot \bigg[ 1 + O_g\bigg(\frac{1}{\log N}\bigg)\bigg] \sim \Phi(N;z) \cdot \|g\|_2^2.
		\end{align*}
		This proves the lemma.
	\end{proof}
	\subsection{Martingale Difference Sequences} \label{sec:MDS}
	Inspired in part by the work of Blei and Janson \cite{blei}, Harper \cite{harper2013limit} applied the theory of martingales in order to study random multiplicative functions. Here, we follow the philosophy which was later developed in \cite{harper2020moments}. Given a sequence $a_n$ and a prime number $p$, we write
	$$\CM_p(x) := \sum_{\substack{n \leq x \\ P^+(n) =p}}a_n f(n).$$
	Clearly, we have 
	$$\sum_{n \leq x} a_n f(n) = \sum_{p \leq x}\CM_p(x).$$ 
	Let $\mathcal F_p$ denote the sigma algebra generated by $\{f(q):q\le p\}$. That is:
	\[
	\mathcal F_p:=\sigma\big(\{f(q):q\le p\}\big).
	\]
	Then one can show that $\mathcal M_p(x)$ is a \textit{martingale difference sequence} with respect to the filtration $(\mathcal F_p)_p$. Explicitly, this means that $\mathcal{M}_p(x)$ is $\mathcal{F}_p$-measurable, integrable, and satisfies $$\mathbb{E}[\mathcal{M}_p(x) \mid \mathcal{F}_{<p}] = 0,$$ where $\mathcal{F}_{<p}$ denotes the sigma-algebra generated by $\{f(q) : q < p\}$. To see why the conditional expectation vanishes, note that any $n$ with largest prime factor $P^+(n) = p$ can be uniquely factored as $n = p^k m$, where $k \ge 1$ and $P^+(m) < p$. By the complete multiplicativity of $f$, we have $f(n) = f(p)^k f(m)$. Since $f(p)$ is independent of $\mathcal{F}_{<p}$ and distributed uniformly on the unit circle, we have
	$$ \mathbb{E}\big[f(p)^k f(m) \mid \mathcal{F}_{<p}\big] = f(m) \mathbb{E}\big[f(p)^k\big] = 0 $$
	for all $k \ge 1$, which linearly extends to give $\mathbb{E}[\mathcal{M}_p(x) \mid \mathcal{F}_{<p}] = 0$.

	This setup is fruitful, as it allows us to study the sum of random multiplicative functions by using theorems which apply to martingale difference sequences. Indeed, by applying McLeish's central limit theorem, Soundararajan and Xu established the following:\footnote{Note that we have modified the notation of the theorem as compared to how it appears in \cite{sound-xu-CLT}.}
	\begin{lem}[{Soundararajan \& Xu, 2023 \cite[Theorem 3.1]{sound-xu-CLT}}]\label{thm:sound-xu 3.1}
		Let $f$ denote a random Steinhaus multiplicative function, and let $a_n$ denote a sequence of complex numbers. Put
		$$V=V(N) := \sum_{n \leq N}|a_n|^2,$$
		and define the complex valued random variable 
		$$S=S(N):= \frac{1}{\sqrt{V(N)}}\sum_{n \leq N}a_n f(n).$$
		Suppose $\CS$ is a subset of $[2,N]$ such that for some $ \epsilon \in (0,1]$ the following three conditions hold:
		\noindent(1). We have
		$$\sum_{\substack{n \leq N \\ n \not \in \CS}}|a_n|^2 \leq \epsilon^2 V.$$
		(2). We have
		$$\bigg|\sum_{\substack{n_1,\dots,n_4 \in \CS \\ n_1n_2=n_3n_4 \\ n_1 \ne n_3,n_2 \ne n_4 \\ P^+(n_1)=P^+(n_3) \\ P^+(n_2) = P^+(n_4)}} a_{n_1}a_{n_2} \overline{a_{n_3}a_{n_4}}\bigg| \leq \epsilon^2 V^2.$$
		(3). We have 
		$$\bigg|\sum_{\substack{n_1,\dots,n_4 \in \CS \\ n_1n_2=n_3n_4 \\ P^+(n_1)=P^+(n_2) = P^+(n_3) = P^+(n_4)}}a_{n_1}a_{n_2} \overline{a_{n_3}a_{n_4}}\bigg| \leq \epsilon^4 V^2.$$
		Then for any real numbers $t_1$ and $t_2$ we have, with $t^2 = (t_1^2+t_2^2)/2$
		$$\E\big[e^{it_1 \Re(S(N))+it_2\Im(S(N))}\big] = e^{-t^2/2} +O(e^{t^2}\epsilon).$$
	\end{lem}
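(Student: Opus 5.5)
We will prove \Cref{thm:sound-xu 3.1} with McLeish's characteristic-function method for martingale difference arrays, keeping every error quantitative. The plan has three stages: discard the integers outside $\CS$, control the martingale differences through the conditions (2) and (3), and run McLeish's product argument.

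\textbf{Discarding $n\notin\CS$.} Write $S = S_{\CS} + R$, where $S_{\CS}$ is the normalized sum restricted to $n\in\CS$. By orthogonality of $f$, $\E|R|^2 = V^{-1}\sum_{n\notin\CS}|a_n|^2 \le \epsilon^2$ by condition (1). Since $|e^{ix}-e^{iy}|\le|x-y|$, replacing $S$ by $S_{\CS}$ in the characteristic function costs $O(|t|\,\E|R|) = O(|t|\epsilon)$, which is acceptable. The same bound gives $\E|S_{\CS}|^2 = 1+O(\epsilon^2)$. From now on all $n$ lie in $\CS$.

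\textbf{Martingale decomposition.} Fix $t_1,t_2$ and set $X_p := V^{-1/2}\,\Re\big((t_1 - i t_2)\CM_p(N)\big)$, with $a_n$ restricted to $\CS$. Then $t_1\Re S_{\CS}+t_2\Im S_{\CS} = \sum_p X_p$, and each $X_p$ is a real martingale difference with respect to $(\mathcal F_p)$. McLeish's identity reads
$$e^{i\sum_p X_p} = T\, e^{-\frac12\sum_p X_p^2 + \sum_p r(X_p)},\qquad T:=\prod_p (1+iX_p),\qquad |r(x)|\ll |x|^3 .$$
The martingale property gives $\E T = 1$, and we have $\E[T e^{-t^2/2}]=e^{-t^2/2}$.

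It therefore suffices to prove two bounds. The first is a quadratic-variation bound,
$$\E\Big|\sum_p X_p^2 - t^2\Big| \ll t^2\epsilon .$$
The second is a Lindeberg-type bound, $\sum_p \E X_p^4 \ll t^4\epsilon^4$, which controls both $\max_p|X_p|$ and the cubic remainder $\sum_p r(X_p)$.

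We compute $X_p^2 = \frac{|t_1-it_2|^2}{2V}|\CM_p|^2 + \frac{1}{2V}\Re\big((t_1-it_2)^2\CM_p^2\big)$ and treat the two pieces separately.
\begin{itemize}
\item \emph{The $|\CM_p|^2$ piece.} Expanding $\E\big|\sum_p(|\CM_p|^2-\E|\CM_p|^2)\big|^2$ gives quadruples with $n_1n_2=n_3n_4$, $P^+(n_1)=P^+(n_3)$ and $P^+(n_2)=P^+(n_4)$. The diagonal quadruples reproduce $(\E\sum_p|\CM_p|^2)^2$, and the remaining terms are exactly the sum in condition (2). Hence this variance is $\le\epsilon^2$, and Cauchy--Schwarz gives an $L^1$ error of $O(t^2\epsilon)$.
\item \emph{The $\CM_p^2$ piece.} Here $\E|\sum_p \CM_p^2|^2$ produces the same type of sum, with $n_1n_2=n_3n_4$ and $P^+(n_1)=P^+(n_2)$, $P^+(n_3)=P^+(n_4)$. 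Its diagonal part vanishes, because $\E\,\CM_p^2=0$ by rotation invariance of $f(p)$. For the off-diagonal part, I would match primes: after re-pairing, the equal-prime constraints together with $n_1n_2=n_3n_4$ reduce it either to condition (3) (all four largest primes equal) or to condition (2).
\item \emph{The fourth moment.} $\E|\CM_p|^4$ summed over $p$ is the sum in condition (3), so it is $\le\epsilon^4$.
\end{itemize}

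\textbf{Assembly.} Truncate at a stopping time: stop at the first prime $p$ for which $\sum_{q\le p}X_q^2 > 2t^2$. This makes $|T|\le e^{O(t^2)}$, and the probability of stopping is $O(\epsilon)$ by Markov's inequality and the bounds above. Combine this with $|e^{-a}-e^{-b}|\le|a-b|$ for $a,b\ge0$ to reach $e^{-t^2/2}+O(e^{t^2}\epsilon)$.

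The main obstacle is this truncation. McLeish's theorem is qualitative, and converting it into the explicit $e^{t^2}\epsilon$ bound requires care with $\E|T|^2$ and with the cubic remainder $\sum_p|X_p|^3$. I would bound the latter by $\big(\sum_p X_p^2\big)^{1/2}\big(\sum_p X_p^4\big)^{1/2}$ and then apply Cauchy--Schwarz in expectation, which gives $O(t^3\epsilon^2)$.
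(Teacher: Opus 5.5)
Your argument is correct and is essentially the approach behind this lemma. The paper does not reprove it but cites Soundararajan--Xu, who deduce it from McLeish's martingale central limit theorem applied to the differences $\CM_p$, with condition (2) controlling the variance of $\sum_p|\CM_p|^2$ and condition (3) controlling $\sum_p\E|\CM_p|^4$. The latter also equals $\E|\sum_p\CM_p^2|^2$ exactly, because $n_1n_2=n_3n_4$ forces all four largest primes to coincide, so condition (2) is not needed for that piece.

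The one step to tighten is the truncation. With a predictable truncation you get $|T|\le e^{t^2}(1+\max_p|X_p|)$ rather than a pure $e^{O(t^2)}$ bound, and the extra factor costs only $e^{t^2}\E\max_p|X_p|\le e^{t^2}(\sum_p\E X_p^4)^{1/4}\ll e^{t^2}t\epsilon$.
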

	In \Cref{sec:proof if}~we will apply \Cref{thm:sound-xu 3.1}~in order to prove the 'if' direction in \Cref{thm:A} as well as to prove \Cref{thm:B}. Conditions (1) and (3) turn out to be easily verified, and most of the work here goes into verifying condition (2).
	
	In \Cref{sec: proof only if}, we will avoid complicated combinatorics by using the following martingale inequality to bound high moments of the sum $|S_g(N)|$. 
	
	\begin{lem}[The Burkholder--Davis--Gundy Inequalities \cite{burkholder1972integral}]\label{thm:BDG}
		Let $(d_j)_{j \ge 1}$ be a complex-valued martingale difference sequence. For any $k \ge 1$, there exist positive constants $c_k$ and $C_k$ depending only on $k$ such that
		\[
		c_k \E\bigg[ \bigg( \sum_{j=1}^n |d_j|^2 \bigg)^k \bigg] \le \E\bigg[ \sup_{1 \le m \le n} \bigg| \sum_{j=1}^m d_j \bigg|^{2k} \bigg] \le C_k \E\bigg[ \bigg( \sum_{j=1}^n |d_j|^2 \bigg)^k \bigg].
		\]
	\end{lem}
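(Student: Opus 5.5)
The plan is to reduce to real-valued martingales and then prove the real-valued inequality, for the exponent $p = 2k \ge 2$, by Burkholder's good-$\lambda$ method. Since the inequality is only cited in the paper, this is a sketch of how I would prove it from scratch.

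\emph{Reduction to the real case.} Write $d_j = u_j + i v_j$. Because conditional expectation commutes with taking real and imaginary parts, both $(u_j)$ and $(v_j)$ are real martingale difference sequences. Put $M_m = \sum_{j\le m} d_j$, $M^* = \sup_{m\le n}|M_m|$ and $S = (\sum_{j \le n}|d_j|^2)^{1/2}$, and define $U^*, S_U$ and $V^*, S_V$ analogously for the real and imaginary parts. Then
\[
S^2 = S_U^2 + S_V^2, \qquad \max(U^*, V^*) \le M^* \le U^* + V^*.
\]
Since $(x+y)^{2k} \le 2^{2k-1}(x^{2k}+y^{2k})$, both the upper and the lower inequality for $(d_j)$ follow from the real-valued versions applied to $(u_j)$ and $(v_j)$ separately. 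The cost is only constants depending on $k$.

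\emph{Upper bound $\E (M^*)^p \le C_k \E S^p$ (real case).} I would first prove the good-$\lambda$ estimate
\[
\P\big(M^* > 2\lambda,\ S\vee d^* \le \delta\lambda\big) \le c\,\delta^2\, \P(M^* > \lambda), \qquad d^* := \max_j |d_j|,
\]
for $0<\delta<1/2$. To prove it, let $\mu = \inf\{m : |M_m| > \lambda\}$, $\nu = \inf\{m : |M_m| > 2\lambda\}$ and $\sigma = \inf\{m : S_m \vee d^*_m > \delta\lambda\}$, where $S_m$ and $d^*_m$ are the partial square function and partial maximal jump. Consider the martingale
\[
G_m = \sum_{\mu < j \le m \wedge \nu \wedge \sigma} d_j.
\]
The jumps are at most $\delta\lambda$, so on the event in question $|M_\mu| \le \lambda + \delta\lambda$, and hence $|G_n| \ge \lambda/2$ there. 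Chebyshev and orthogonality of increments then give
\[
\P(|G_n| \ge \lambda/2) \le \frac{4}{\lambda^2}\,\E\, \sum_{\mu< j\le \sigma} d_j^2\,\bm{1}_{\mu<n} \le 4\delta^2\, \P(\mu < n).
\]
Integrating the good-$\lambda$ estimate against $p\lambda^{p-1}d\lambda$ yields
\[
\E(M^*)^p \le 2^p c\,\delta^2\, \E (M^*)^p + C_\delta\, \E (S\vee d^*)^p.
\]
Choosing $\delta$ small absorbs the first term; to make the absorption legitimate I first truncate to finitely many terms, so that $M^*\in L^p$ whenever the right side is finite. Finally $d^* \le S$, which gives the upper bound.

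\emph{Lower bound $\E S^p \le C'_k \E(M^*)^p$.} This is the symmetric argument with the roles of $S$ and $M^*$ exchanged:
\[
\P\big(S > 2\lambda,\ M^*\vee d^* \le \delta\lambda\big) \le c\,\delta^2\, \P(S>\lambda).
\]
Here the stopping times are defined through $S_m$, and in place of orthogonality I use the identity
\[
\E\big(S_{\tau'}^2 - S_\tau^2\big) = \E\big(M_{\tau'}-M_\tau\big)^2,
\]
valid for stopping times $\tau\le\tau'$. On the relevant event, $S^2$ must grow by at least $3\lambda^2 - \delta^2\lambda^2$ after $S$ first exceeds $\lambda$, while $|M_{\tau'}-M_\tau| \le 2\delta\lambda$; Chebyshev then gives the $\delta^2$ factor. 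Now $d^* \le 2M^*$, so integrating exactly as before completes the proof.

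The main obstacle is controlling the overshoot at the stopping times, since a single large jump $d_j$ could carry $|M|$ or $S$ far past the threshold. This is why the extra quantity $d^*$ is built into both good-$\lambda$ events. It is harmless precisely because $p \ge 2$ here: $d^*$ is dominated by both $S$ and $2M^*$, so no Davis decomposition is needed. The one remaining subtlety is the absorption step, which requires finiteness of $\E(M^*)^p$; truncation to finitely many terms (with $n$ fixed) handles it.
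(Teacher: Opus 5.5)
Your reduction to the real case and your integrate-and-absorb step are fine. The gap is that both good-$\lambda$ inequalities you rely on are false for general discrete-time martingales. The failing step is the Chebyshev bound. It needs $\sum_{\mu<j\le\sigma}d_j^2\le\delta^2\lambda^2$ on all of $\{\mu<n\}$. But $\sigma$ is the first time $S_m\vee d^*_m$ exceeds $\delta\lambda$, so this sum contains $d_\sigma^2$, which is unbounded. Putting $d^*$ into the event controls jumps only \emph{on} the event; that is why $|M_\mu|\le(1+\delta)\lambda$ is correct there. The $L^2$ bound, however, is an expectation over all paths, including those where a huge jump is exactly what triggers $\sigma$. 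Indeed $d^*\le S$, so your event is simply $\{S\le\delta\lambda\}$.

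Concretely, let the $d_j$ be i.i.d., equal to $\epsilon$ with probability $1-q$ and to $-\epsilon(1-q)/q$ with probability $q$. Take $\epsilon=3\lambda/n$, $n\ge 9/\delta^2$ and $q\le 1/(2n)$. With probability at least $1/2$ no large value occurs, and then $M^*=3\lambda$, $S^2=9\lambda^2/n\le\delta^2\lambda^2$ and $d^*=\epsilon\le\delta\lambda$. So the left side of your first inequality is at least $1/2$, while the right side is at most $c\delta^2$.

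Your lower-bound inequality fails for the same reason: the claim $|M_{\tau'}-M_\tau|\le 2\delta\lambda$ is false when $\tau'$ is triggered by a big jump. Consider the martingale that moves from $0$ to $\pm\epsilon$ with equal probabilities, and from $\pm\epsilon$ back to $0$ with probability $1-q$ or to $\pm\epsilon/q$ with probability $q$, frozen afterwards. With probability at least $1-nq$ it has $M^*\vee d^*=\epsilon$ and $S^2=n\epsilon^2$. Take $\epsilon=\delta\lambda$, $n\delta^2>4$ and $q\le 1/(2n)$.

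So, contrary to your closing remark, the facts $d^*\le S$ and $d^*\le 2M^*$ do not make extra structure unnecessary. The paper only cites the inequality, and the standard proofs supply this structure in one of two ways.

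\emph{Davis's decomposition.} Write $d_j=d_j'+d_j''$ with $|d_j'|\le 4d^*_{j-1}$ and $\|\sum_j|d_j''|\|_p\ll_p\|d^*\|_p$. Then the jump of $d'$ at $\sigma$ is at most $4d^*_{\sigma-1}\le 4\delta\lambda$, which is exactly the missing overshoot bound.

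\emph{Predictable stopping, using $p\ge2$.} Stop with the conditional square function $s_m=(\sum_{j\le m}\E[d_j^2\mid\mathcal F_{j-1}])^{1/2}$ via $\sigma=\inf\{m: s_{m+1}\vee d^*_m>\delta\lambda\}$. This is a genuine stopping time with $s_\sigma\le\delta\lambda$. Your argument then yields $\E(M^*)^p\ll_p\E s^p+\E(d^*)^p$. The predictable-projection inequality $\E(\sum_j\E[x_j\mid\mathcal F_{j-1}])^r\le r^r\E(\sum_j x_j)^r$, for adapted $x_j\ge0$ and $r=p/2\ge1$, then gives $\E s^p\le(p/2)^{p/2}\E S^p$.

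For the lower bound with $p=2k$ you can avoid a second good-$\lambda$ inequality altogether. The case $k=1$ is just $\E S^2=\E M_n^2$. For $k\ge2$, use the identity $S^2=M_n^2-2\sum_j M_{j-1}d_j$. Apply the upper bound at exponent $k$ to the transform $\sum_j M_{j-1}d_j$, whose square function is at most $M^*S$. Cauchy--Schwarz then gives $\E S^{2k}\ll_k\E(M^*)^{2k}+(\E(M^*)^{2k})^{1/2}(\E S^{2k})^{1/2}$, and you absorb the last term.
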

	
	\section{Reduction to Mean-Zero Weights}\label{sec:reduction}
	In this section, we show that it suffices to prove \Cref{thm:A} under the assumption that $g \in \CB\CV_0[\R/\Z]$, and we formally extend \Cref{thm:B} to non-mean-zero weights under a restricted roughness threshold. 
	
	Let $g \in \CB\CV[\R/\Z]$ be a non-constant function and write $\Tilde{g} = g - \hat{g}(0)$, so that $\Tilde{g} \in \CB\CV_0[\R/\Z]$. Since the sum defining $\Delta$ in \eqref{eq: def Delta} only involves non-zero Fourier modes, we immediately have $\Delta(g) = \Delta(\Tilde{g})$. By \eqref{eq:harper better sqrt}, for any fixed constant $c$, we have
	\begin{equation}\label{eq: better than sqrt 1}
		\frac{c}{\sqrt{N}}\sum_{n \leq N} f(n) \xrightarrow[(N \to \infty)]{d} 0.
	\end{equation}
	Decomposing $g = \Tilde{g} + \hat{g}(0)$, we can write our normalized sum as
	\begin{equation}\label{eq: proof cor 1}
		S_{g}(N) = \sqrt{\frac{V_{\Tilde{g}}(N)}{V_{g}(N)}}\bigg(\frac{1}{\sqrt{V_{\Tilde{g}}(N)}}\sum_{n \leq N}\Tilde{g}(\theta n) f(n)\bigg) + \frac{\hat{g}(0)}{\sqrt{V_{g}(N)}}\sum_{n \leq N} f(n).
	\end{equation}
	
	By \Cref{lem: variance estimate 1}, we have $V_{g}(N) \sim N \|g\|_2^2$ and $V_{\Tilde{g}}(N) \sim N \|\Tilde{g}\|_2^2$. Therefore, the second term in \eqref{eq: proof cor 1} converges in distribution to $0$ by \eqref{eq: better than sqrt 1}. Assuming \Cref{thm:A} holds for the mean-zero function $\Tilde{g}$, the first sum in \eqref{eq: proof cor 1} converges in distribution to $\CC\CN(0,1)$ if and only if $\Delta(\Tilde{g}) = 0$. Applying Slutsky's theorem to \eqref{eq: proof cor 1} and using Parseval's identity ($\|\Tilde{g}\|_2^2 = \|g\|_2^2 - |\hat{g}(0)|^2$) yields
	\begin{equation*}
		S_{g}(N) \xrightarrow[(N \to \infty)]{d}  \sqrt{ \frac{\|\Tilde{g}\|_2^2}{\|g\|_2^2} }\CC\CN(0,1) = \CC\CN\bigg(0,1-\frac{|\hat{g}(0)|^2}{\|g\|_2^2}\bigg) \iff \Delta(g) = 0.
	\end{equation*}
	Conversely, if $\Delta(g) \ne 0$ and $S_g(N)$ converges to a complex normal random variable, the same Slutsky deduction implies the mean-zero sums for $\Tilde{g}$ would also converge to a complex normal random variable, contradicting \Cref{thm:A} for $\Tilde{g}$. For the proof of \Cref{thm:A}, we may therefore assume without loss of generality that $g \in \CB\CV_0[\R/\Z]$.
	
	The situation for \Cref{thm:B} is slightly more restrictive. To run the analogous Slutsky deduction over rough numbers, we require a substitute for \eqref{eq: better than sqrt 1}. This is provided by the following proposition, which is an immediate consequence of \cite[Theorem 1.2]{xu2024better}:
	
	\begin{prop}[{Follows from Xu, 2024 \cite[Theorem 1.2]{xu2024better}}]\label{prop better sqrt rough}
		Let $f$ be a Steinhaus random multiplicative function and $N$ be large. If $\log \log z = o(\sqrt{\log \log N}),$ then for any constant $c$, one has
		$$\frac{c}{\sqrt{N/\log z}}\sum_{\substack{n \leq N \\ P^-(n) > z}}f(n) \xrightarrow[(N \to \infty)]{d} 0.$$
	\end{prop}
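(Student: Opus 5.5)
The plan is to deduce the proposition from Xu's first-moment bound together with Markov's inequality. Convergence in distribution to the constant $0$ is the same as convergence in probability to $0$. So it suffices to show
$$\E\bigg|\frac{1}{\sqrt{N/\log z}}\sum_{\substack{n \leq N \\ P^-(n) > z}}f(n)\bigg| = o(1)$$
as $N \to \infty$. Indeed, Markov's inequality then gives, for every fixed $\varepsilon>0$, that the probability of the normalized sum (multiplied by the constant $c$) exceeding $\varepsilon$ in modulus is at most $|c|\varepsilon^{-1}\cdot o(1)\to 0$. If $c=0$ there is nothing to prove.

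The main step is to quote \cite[Theorem 1.2]{xu2024better}, which I expect to give, uniformly in a suitable range of $z$,
$$\E\bigg|\sum_{\substack{n \leq N \\ P^-(n) > z}}f(n)\bigg| \ll \sqrt{\frac{N}{\log z}}\bigg(\frac{\log\log z}{\sqrt{\log\log N}}+o(1)\bigg).$$
The shape of this bound is a Harper-type better-than-squareroot saving, of size roughly $(\log\log N)^{-1/4}$, inflated by a correction coming from the sieve dimension of the $z$-rough numbers. I would check the precise form of Xu's statement: the saving factor should be a function of the ratio $\log\log z/\sqrt{\log\log N}$ (or a power of it) that tends to $0$ as this ratio tends to $0$. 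The hypothesis $\log\log z = o(\sqrt{\log\log N})$ is exactly what makes this factor $o(1)$.

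The normalization must also be matched. The natural square-root size is $\sqrt{\Phi(N;z)}$, where $\Phi(N;z) \asymp N/\log z$ uniformly for $2\le z\le N^{1/2}$, say, by standard sieve bounds (Buchstab/Mertens). Our hypothesis forces $z \le N^{o(1)}$, so this comparison is available, and $\sqrt{N/\log z}$ and $\sqrt{\Phi(N;z)}$ are interchangeable up to constants.

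The only genuine obstacle is translating Xu's theorem into this form. One must verify that its range of uniformity covers every $z$ with $\log\log z=o(\sqrt{\log\log N})$, including bounded $z$, where the statement reduces to Harper's result \eqref{eq:harper better sqrt}. One must also verify that the implied saving is $o(1)$ uniformly in this range, rather than merely for fixed $z$. If Xu states the bound with an explicit factor such as $\big(\frac{1+\log\log z}{\sqrt{\log\log N}}\big)^{\kappa}$ for some $\kappa>0$, the deduction is immediate from the Markov argument above.
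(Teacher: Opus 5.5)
Your proposal is correct and is the same deduction the paper has in mind. The paper gives no argument beyond citing Xu's first-moment bound, and the step you make explicit is exactly what it means by ``immediate consequence'': from $\E\big|\sum_{n\le N,\,P^-(n)>z} f(n)\big| = o(\sqrt{N/\log z})$ in the range $\log\log z = o(\sqrt{\log\log N})$, Markov's inequality gives convergence in probability to $0$, and hence convergence in distribution. The caveats you raise are verifications rather than gaps: the exact shape and uniformity range of Xu's bound, and the case of bounded $z$, which the paper never needs since it applies the result with $z=\xi(N)\to\infty$.
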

	
	Because \Cref{prop better sqrt rough} imposes a strict growth condition on the roughness parameter, extending \Cref{thm:B} to general weights in $\CB\CV[\R/\Z]$ requires applying this same constraint. 
	
	\begin{cor}[Corollary to \Cref{thm:B}]\label{cor:B}
		Fix $\theta$ to be an irrational number satisfying \eqref{eq:sound-xu badness approx}. Let $g \in \CB\CV[\R/\Z]$ be a non-constant function. Let $\xi(N)$ be a function of $N$ which tends to $+\infty$, and satisfies $$\log \log \xi(N) = o(\sqrt{ \log \log N}).$$ Then $S_{g}(N;\xi(N))$ converges in distribution to a complex Gaussian random variable of mean $0$ and variance $1-|\hat{g}(0)|^2/\|g\|_2^2$ as $N \to \infty$. 
	\end{cor}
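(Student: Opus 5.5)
The plan is to run the same Slutsky deduction as for \Cref{thm:A} in this section, with \Cref{prop better sqrt rough} replacing \eqref{eq: better than sqrt 1} and \Cref{thm:B} applied to the mean-zero part. Write $z=\xi(N)$ and $\Tilde{g}=g-\hat{g}(0)\in\CB\CV_0[\R/\Z]$. Since $g$ is non-constant, $\Tilde{g}$ is non-constant as well. Decompose
\begin{equation*}
S_g(N;z)=\sqrt{\frac{V_{\Tilde{g}}(N;z)}{V_g(N;z)}}\,S_{\Tilde{g}}(N;z)+\frac{\hat{g}(0)}{\sqrt{V_g(N;z)}}\sum_{\substack{n\le N\\ P^-(n)>z}}f(n).
\end{equation*}

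First I would check that the hypotheses of \Cref{thm:B} and \Cref{lem: variance estimate 2} hold. The assumption $\log\log\xi(N)=o(\sqrt{\log\log N})$ forces $\xi(N)\le N^{o(1)}$. In particular $z\in[2,N/2]$ for large $N$, and $N-\xi(N)\to\infty$. Hence \Cref{thm:B} gives $S_{\Tilde{g}}(N;z)\to^d\CC\CN(0,1)$.

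Next, \Cref{lem: variance estimate 2} gives $V_g(N;z)\sim\Phi(N;z)\|g\|_2^2$ and $V_{\Tilde{g}}(N;z)\sim\Phi(N;z)\|\Tilde{g}\|_2^2$. By Parseval, the deterministic prefactor therefore tends to
\begin{equation*}
\sqrt{\frac{\|\Tilde{g}\|_2^2}{\|g\|_2^2}}=\sqrt{1-\frac{|\hat{g}(0)|^2}{\|g\|_2^2}}.
\end{equation*}

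For the second term, I would use the standard sieve bound $\Phi(N;z)\asymp N\prod_{p\le z}(1-1/p)\asymp N/\log z$, valid uniformly for $z\le N^{o(1)}$ (for instance by the fundamental lemma, \Cref{lem: stelios}). This gives $\sqrt{V_g(N;z)}\asymp\sqrt{N/\log z}$. The second term is then a bounded deterministic multiple of
\begin{equation*}
\frac{1}{\sqrt{N/\log z}}\sum_{\substack{n\le N\\ P^-(n)>z}}f(n).
\end{equation*}
I would apply \Cref{prop better sqrt rough} with a fixed constant $c$ exceeding the implied bound on that multiple. Convergence in distribution to $0$ is convergence in probability, and a bounded-ratio rescaling preserves it, so the second term tends to $0$ in probability.

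Slutsky's theorem then yields
\begin{equation*}
S_g(N;z)\to^d\sqrt{1-|\hat{g}(0)|^2/\|g\|_2^2}\;\CC\CN(0,1)=\CC\CN\big(0,1-|\hat{g}(0)|^2/\|g\|_2^2\big).
\end{equation*}

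The only real checks are the range compatibility $\xi\le N^{o(1)}$ and the asymptotic $\Phi(N;z)\asymp N/\log z$ needed to match the normalization in \Cref{prop better sqrt rough}. Both are routine, so I expect no genuine obstacle.
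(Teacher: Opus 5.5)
Your proposal is correct and follows the paper's argument: you split off $\hat{g}(0)$, apply \Cref{thm:B} to $\tilde{g}$, use \Cref{lem: variance estimate 2} for the deterministic prefactor, make the constant term vanish with \Cref{prop better sqrt rough}, and conclude by Slutsky's theorem. You also make explicit two checks the paper leaves implicit. First, $\log\log\xi(N)=o(\sqrt{\log\log N})$ forces $\xi(N)\le N^{o(1)}$, so the hypotheses of \Cref{thm:B} and \Cref{lem: variance estimate 2} hold. Second, $V_g(N;\xi(N))\asymp\Phi(N;\xi(N))\asymp N/\log\xi(N)$, which is what matches the normalization in \Cref{prop better sqrt rough}.
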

	\begin{proof}
		The proof follows exactly the same Slutsky's theorem deduction used above for \Cref{thm:A}. We write $\Tilde{g} = g - \hat{g}(0)$ and apply \Cref{lem: variance estimate 2} to handle the variance asymptotics. By substituting \Cref{prop better sqrt rough} in place of \eqref{eq: better than sqrt 1}, the $\hat{g}(0)$ term vanishes in distribution, yielding the scaled Gaussian limit. 
	\end{proof}

	\section{Proof of Central Limit Theorems}\label{sec:proof if}
	In this section we prove the `if' direction of \Cref{thm:A}, as well as \Cref{thm:B}. As was remarked in \Cref{sec:reduction}, we may assume $\hat{g}(0) = 0$. The proofs of both theorems follow a similar argument. We make the attempt to present both proofs in a unified way so as to avoid duplication of efforts. We define a roughness parameter $z \geq 1$ whose definition changes depending on which theorem we are proving:
	\begin{equation}\label{eq: def z}
		z := \begin{cases}
			1, \quad & \text{ in the context of \Cref{thm:A}},\\
			\xi(N), \quad & \text{ in the context of \Cref{thm:B}}
		\end{cases}.
	\end{equation}
	
	A density argument will show that it suffices to prove the results for a Fourier-truncated version of $g$. If $g \in \CB\CV_0[\R/\Z]$ is a non-constant function with Fourier coefficients $c_\l$, then we write
	\begin{equation}\label{eq: def trig poly}
		Q(t) = Q_{L,g}(t) := \sum_{|\l| \leq L} c_\l e(\l t), \quad (c_\l \ll_g |\l|^{-1}, c_0 = 0).
	\end{equation}
	The bound $c_\l \ll_g |\l|^{-1}$ holds uniformly for all $\l \in \Z$, and comes from the fact that $g \in \CB\CV[\R/\Z]$. In order to prove both \Cref{thm:A} and \Cref{thm:B}, we will apply \Cref{thm:sound-xu 3.1} with $\CS =  [2,N] \cap \{n : P^-(n) > z\},$ $z$ as in \eqref{eq: def z}, and $$a_n := \bm{1}_{P^-(n) > z} Q(\theta n).$$ 
	With this choice of $\CS,a_n$, it is clear that condition (1) of \Cref{thm:sound-xu 3.1} holds. Indeed, we trivially have
	$$\sum_{\substack{n \leq N \\ n \not \in \CS}} |a_n|^2 = \sum_{\substack{n \leq N \\ n \not \in \CS}} \big|\bm{1}_{P^-(n) > z} Q(\theta n)\big|^2 = 0.$$
	The next lemma shows that condition (3) is also satisfied as long as $N$ is sufficiently large in terms of $L$:
	\begin{lem}\label{lem: verif cond 3}
		Let $A > 0$ be given, let $\CS$ be as in \eqref{eq: def z} and let $Q$ be as in \eqref{eq: def trig poly}. Uniformly for $L \geq 1$, one has
		$$\bigg|\sum_{\substack{n_1,\dots,n_4 \in \CS \\ n_1n_2=n_3n_4 \\ P^+(n_1)=P^+(n_2) = P^+(n_3) = P^+(n_4)}}Q(\theta n_1)Q(\theta n_2)\overline{Q(\theta n_3)Q(\theta n_4)}\bigg| \ll_{A,g} \frac{N^2 (\log L)^4}{(\log N)^A}.$$
	\end{lem}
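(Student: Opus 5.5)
Since $c_0 = 0$ and $c_\l \ll_g |\l|^{-1}$, we have the uniform bound
$$|Q(t)| \le \sum_{0<|\l|\le L}|c_\l| \ll_g \log L \quad \text{for all } t.$$
I will therefore drop all the oscillation and bound the sum trivially in absolute value. The four factors contribute at most $(\log L)^4$, so the left-hand side is
$$\ll_g (\log L)^4 \cdot \#\{(n_1,\dots,n_4)\in \CS^4 : n_1n_2=n_3n_4,\ P^+(n_i)=p\ \text{for all } i,\ \text{for some prime } p\}.$$
It is harmless to enlarge $\CS$ to all of $[2,N]$ here, because we only need an upper bound on a count of nonnegative terms. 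This is what makes the estimate uniform in $L$ and in $z$.

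To count the quadruples, fix the common largest prime $p$ and write $n_i = p\,m_i$ with $m_i \le N/p$ and $P^+(m_i)\le p$. The equation $n_1n_2=n_3n_4$ becomes $m_1m_2=m_3m_4$. For fixed $m_1,m_2$, the number of pairs $(m_3,m_4)$ with $m_3m_4 = m_1m_2$ is at most $\tau(m_1m_2)\le \tau(m_1)\tau(m_2)$. The number of quadruples attached to $p$ is therefore at most
$$\sum_{\substack{m_1,m_2\le N/p\\ P^+(m_1),P^+(m_2)\le p}}\tau(m_1)\tau(m_2) = \bigg(\sum_{\substack{m\le N/p\\ P^+(m)\le p}}\tau(m)\bigg)^2 .$$
Summing over $p\le N$ and applying \Cref{lem: smooth divisor sums} bounds the total count by $\ll_A N^2/(\log N)^A$ for $N$ large in terms of $A$. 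For bounded $N$ the claim is trivial after enlarging the implied constant. Multiplying by $(\log L)^4$ gives the stated bound.

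The only step requiring care is the reduction $n_i = p\,m_i$. It is exact because $p = P^+(n_i)$ divides every $n_i$, and the common factor $p^2$ cancels from both sides of $n_1n_2=n_3n_4$. The real content, saving an arbitrary power of $\log N$ from the smoothness constraint, is already supplied by \Cref{lem: smooth divisor sums}, so I expect no genuine obstacle.
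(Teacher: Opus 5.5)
Your proof is correct and is essentially the paper's argument: the paper expands $Q$ and applies the triangle inequality to the coefficients, which is equivalent to your pointwise bound $|Q(t)|\le\sum_{0<|\ell|\le L}|c_\ell|$. It then performs the same reduction $n_i=p\,m_i$, uses $\tau(m_1m_2)\le\tau(m_1)\tau(m_2)$, and invokes \Cref{lem: smooth divisor sums}.

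One cosmetic point, shared with the paper: the bound $\sum_{0<|\ell|\le L}|c_\ell|\ll_g \log L$ fails at $L=1$, so the factor should be read as $(\log 2L)^4$ or $(1+\log L)^4$.
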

	\begin{proof}
		By expanding the definition of $Q$ using \eqref{eq: def trig poly} and applying the triangle inequality, we see that
		\begin{align}
			\label{eq: check cond 3 1} \bigg|&\sum_{\substack{n_1,\dots,n_4 \in \CS \\ n_1n_2=n_3n_4 \\ P^+(n_1)=P^+(n_2) = P^+(n_3) = P^+(n_4)}} Q(\theta n_1)Q(\theta n_2)\overline{Q(\theta n_3)Q(\theta n_4)}\bigg| \ll \\ &\ll \sum_{\substack{\l_1,\dots,\l_4 \\ |\l_i| \leq L}} \big|c_{\l_1}c_{\l_2}\overline{c_{\l_3}c_{\l_4}}\big| \sum_{\substack{n_1,\dots,n_4 \leq N  \\ n_1n_2=n_3n_4 \\ P^+(n_1)=\cdots= P^+(n_4)}} \prod_{i = 1}^4\bm{1}_{\CS}(n_i) \nonumber \\
			&\le \sum_{\substack{\l_1,\dots,\l_4 \\ |\l_i| \leq L}} \big|c_{\l_1}c_{\l_2}\overline{c_{\l_3}c_{\l_4}}\big| \sum_{\substack{n_1,\dots,n_4 \leq N  \\ n_1n_2=n_3n_4 \\ P^+(n_1)=\cdots= P^+(n_4)}}1, \nonumber
		\end{align}
		Note that each $n_i$ being summed can be rewritten as $n_i = pm_i$, where $p = P^+(n_1) = P^+(n_2) = P^+(n_3) = P^+(n_4),$ and $P^+(m_i) \leq p$. As such, we have
		\begin{align*}
			\sum_{\substack{n_1,\dots,n_4 \leq N  \\ n_1n_2=n_3n_4 \\ P^+(n_1)=\cdots= P^+(n_4)}}1 &= \sum_{p \leq N} \sum_{\substack{m_1,\dots,m_4 \leq N/p \\ m_1m_2=m_3m_4 \\ P^+(m_i) \leq p}}1\\
			&\le \sum_{p \leq N} \sum_{\substack{m_1,m_2 \leq N/p \\ P^+(m_i) \leq p}}\tau(m_1m_2).
		\end{align*}
		Since $\tau(m_1m_2)\le \tau(m_1)\tau(m_2)$ for all integers $m_1,m_2$, we can rewrite the above equation as
		\begin{align*}
			\sum_{\substack{n_1,\dots,n_4 \leq N  \\ n_1n_2=n_3n_4 \\ P^+(n_1)=\cdots= P^+(n_4)}}1 &\ll  \sum_{p \leq N} \bigg(\sum_{\substack{m \le N/p\\ P^+(m) \leq p}}\tau(m) \bigg)^2.
		\end{align*}
		By applying \Cref{lem: smooth divisor sums} to the above, we see that for a given $A > 0$, one has
		\begin{align}\label{eq: check cond 3 2}
			\sum_{\substack{n_1,\dots,n_4 \leq N  \\ n_1n_2=n_3n_4 \\ P^+(n_1)=\cdots= P^+(n_4)}}1 &\ll_A  \frac{N^2}{(\log N)^A}.
		\end{align}
		Substituting \eqref{eq: check cond 3 2} into \eqref{eq: check cond 3 1} yields
		\begin{align}
			\bigg|\sum_{\substack{n_1,\dots,n_4 \in \CS \\ n_1n_2=n_3n_4 \\ P^+(n_1)=P^+(n_2) = P^+(n_3) = P^+(n_4)}}Q(\theta n_1)Q(\theta n_2)\overline{Q(\theta n_3)Q(\theta n_4)}\bigg| &\ll_A \frac{N^2}{(\log N)^A}\sum_{\substack{\l_1,\dots,\l_4 \\ |\l_i| \leq L}} \big|c_{\l_1}c_{\l_2}\overline{c_{\l_3}c_{\l_4}}\big|.
		\end{align}
		Since $c_\ell \ll_g |\l|^{-1}$, the $L$ sum above is $\ll_g (\log L)^4$, uniformly in $L$.
	\end{proof}
	Having verified conditions (1) and (3) of \Cref{thm:sound-xu 3.1}, it remains to check that condition (2) is also satisfied. The following lemma is the first step in that direction. 
	
	\begin{lem}\label{lem:reduction cond 2} Let $N$ be large. Fix a number $\theta$ which satisfies \eqref{eq:sound-xu badness approx}. Let $\xi(N)$ be a function of $N$ which tends to $+\infty$ as $N \to \infty$. Let $\CS$ be defined as in \eqref{eq: def z}. Suppose $Q$ is a trigonometric polynomial of the form \eqref{eq: def trig poly}. Then, uniformly for $L \leq \min\{\xi(N),(\log N)^{1/10}\}$, one has
		
		\begin{align*}
			\sum_{\substack{n_1,\dots,n_4 \in \CS \\ n_1n_2=n_3n_4 \\ n_1 \ne n_3,n_2 \ne n_4 \\ P^+(n_1)=P^+(n_3) \\ P^+(n_2) = P^+(n_4)}}Q(\theta n_1)Q(\theta n_2)\overline{Q(\theta n_3)Q(\theta n_4)} &= \sum_{\substack{\l_1,\dots,\l_4 \\ |\l_i| \leq L}}C(\bm{\l}) \sum_{\substack{n_1,\dots,n_4 \in \CS \\ n_1n_2=n_3n_4 \\ \{n_1\l_1,n_2\l_2\}=\{n_3\l_3,n_4\l_4\} \\ n_1 \ne n_3,n_2 \ne n_4 \\ P^+(n_1)=P^+(n_3) \\ P^+(n_2) = P^+(n_4) }} 1\\& +O\bigg(\frac{N^2L}{(\log N)^{ \frac{4}{5}}}\bigg).
		\end{align*}
		where $C(\bm{\l}) := c_{\l_1}c_{\l_2}\overline{c_{\l_3}c_{\l_4}}$. 
	\end{lem}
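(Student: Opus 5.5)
Expanding each $Q(\theta n_i)=\sum_{|\l_i|\le L}c_{\l_i}e(\l_i\theta n_i)$, the left-hand side becomes
$$\sum_{\bm{\l}} C(\bm{\l})\sum_{(n_i)} e\big(\theta(\l_1n_1+\l_2n_2-\l_3n_3-\l_4n_4)\big),$$
where the inner sum is over the same set of $(n_1,\dots,n_4)$. The plan is to split, for each $\bm{\l}$, the tuples according to whether the integer $k:=\l_1n_1+\l_2n_2-\l_3n_3-\l_4n_4$ vanishes. When $k=0$ the phase is $1$. I will then argue that, up to an acceptable error, $k=0$ together with $n_1n_2=n_3n_4$ forces $\{n_1\l_1,n_2\l_2\}=\{n_3\l_3,n_4\l_4\}$. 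Given $n_1n_2=n_3n_4$, the products $(\l_1n_1)(\l_2n_2)$ and $(\l_3n_3)(\l_4n_4)$ agree when $\l_1\l_2=\l_3\l_4$, so equal sums and equal products give equal multisets. When $\l_1\l_2\ne\l_3\l_4$, the equation $k=0$ defines a thin set. Parametrise via \Cref{lem:parameterization}, $n_1=ga,n_2=hb,n_3=gb,n_4=ha$, so that $k=g(\l_1a-\l_3b)+h(\l_2b-\l_4a)$. For fixed $a,b,g$ this determines $h$ unless $\l_2b=\l_4a$. If both coefficients vanish, then $a/b=\l_3/\l_1=\l_2/\l_4$, so $\l_1\l_2=\l_3\l_4$. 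Otherwise the count is $\ll\sum_{a,b,g}1\ll N\sum_{ab\le N}\ldots$, and a divisor-type count gives $\ll L^{O(1)}N^{3/2+o(1)}$, or at worst $N^2/\log N$ with $L$ factors. Summing against $|C(\bm\l)|$, which totals $\ll(\log L)^4$, keeps this below $N^2L/(\log N)^{4/5}$.

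The main work is the tuples with $k\ne0$, where we must exhibit cancellation in the exponential sum. Write $n_1=p m_1$, $n_3=p m_3$, $n_2=p' m_2$, $n_4=p'm_4$, with $p=P^+(n_1)$ and $p'=P^+(n_2)$. Using the constraint $n_1n_2=n_3n_4$ via the parametrisation, one variable, say $g$ or $h$, runs freely over an interval, subject to the roughness condition $P^-(\cdot)>z$ and a smoothness condition from the $P^+$ constraints. The phase is then $e(\theta(\l_1a-\l_3b)g)$. Fixing the remaining variables, I would bound the inner sum over the free variable by \Cref{thm: exp sum smooth}, or by \Cref{exp sum rough} when only roughness matters. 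This needs $q$ large for the frequency $\theta s$ with $s=\l_1a-\l_3b\neq0$. Such $q$ come from Dirichlet's theorem combined with \eqref{eq:sound-xu badness approx}, exactly as in the proof of \Cref{lem: variance estimate 2}. This works as long as $|s|$ is small, say $|s|\le(\log N)^{B}$, which then gives $q\gg(\log N)^{127B'}$ and a power-of-log saving. When $s=0$ but the $h$-coefficient is nonzero, I would run the same argument with $h$ as the free variable.

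The obstacle is the large $a,b$, where $|s|$ can be as big as $LN$ and the Diophantine input fails. Here I would use \Cref{lem: badness spacing} and \Cref{cor: log weights}. The set of $b$ (for fixed $a$, or in the $bs-y$ form) for which $\theta(\l_1a-\l_3b)$ is close to a rational with small denominator is sparse, so its contribution, weighted by $\tau_k(b)/b$ coming from counting $g,h$, is small. For the remaining $(a,b)$ the exponential sum estimate applies. Terms with $g$ or $h$ smaller than a power of $\log N$ (so that the free sum is too short) are handled trivially: they cost $\ll N^2(\log N)^{-c}$ by divisor bounds. The condition $L\le\min\{\xi(N),(\log N)^{1/10}\}$ enters in two places. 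First, it keeps the $L$-dependence of $s$ and of the approximation denominators manageable. Second, in the setting of \Cref{thm:B}, since $P^-(n)>z\ge L$, dividing by $\l_i$ does not interact with the roughness. Balancing the three ranges should give the stated error $O(N^2L(\log N)^{-4/5})$. I expect this exponent to come from optimising the cutoff between the spacing bound's $\delta^{-1/2}(\log X)^{(k^2+1)/2}$ and the exponential-sum saving.
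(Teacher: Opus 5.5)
\textbf{Gap: the regime where $\max(a,b)$ is large.}

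Your only source of cancellation is the sum of $e(\theta s g)$ over $g$, or the analogous sum over $h$. Its length is $x=N/\max(a,b)$, which can be anything between $1$ and $N$. Every saving you invoke is a power of $\log x$, not of $\log N$. \Cref{thm: exp sum smooth} saves at most $(\log x)^{A}$. In \Cref{lem: badness spacing} the tolerance satisfies $\eta\ge x^{-1}$ while $Q\ge(\log N)^{\gamma}$, so $\delta\le\max\{1,(\log x+O(1))^{127}(\log N)^{-2\gamma}\}$. This gives no sparsity of bad $b$ unless $\log x\gg(\log N)^{2\gamma/127}$.

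Your fallback is that tuples with $g$ or $h$ below a power of $\log N$ cost $\ll N^2(\log N)^{-c}$ ``by divisor bounds''. That cannot come from divisor bounds: ignoring the $P^+$ conditions, the tuples with $\min(g,h)\le H$ number $\asymp\sum_{g\le H}\sum_{h}(N/\max(g,h))^2\asymp N^2\log H$. The claim is true here only because of smoothness, which you never invoke. More importantly, even granting it, the range $(\log N)^{B}<x<\exp((\log N)^{2\gamma/127})$ is covered by neither of your mechanisms.

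The paper avoids this with the Soundararajan--Xu split $\max(a,b)\le\sqrt N$ versus $\max(g,h)\le\sqrt N$. In the second case it detects $(a,b)=1$ by M\"obius inversion and writes $a=ku$, $b=kv$. It bounds $k>(\log N)^{127}$ trivially. It then takes cancellation from the long $u$-sum, of length $\ge\sqrt N(\log N)^{-127}$, with frequency $\theta k(g\l_1-h\l_4)$. The exceptional set is now a sparse set of $h$ for fixed $g,k$. So the summed variable always has length $N^{1/2-o(1)}$, and all savings are powers of $\log N$.

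Alternatively, if you keep $g$ as the summation variable, you must use $P^+(ab)\le\min\{P^+(g),P^+(h)\}$ in earnest. If $x\le Y=\exp((\log N)^{c})$, then $a,b$ are $Y$-smooth with $\max(a,b)\ge N/Y$, and Rankin's trick makes this region negligible. For $x>Y$, with $c<1$ fixed but large enough, the spacing lemma and \Cref{thm: exp sum smooth} do give the needed savings. Neither step is in your proposal.

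\textbf{Smaller points.}

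Your count of $k=0$ tuples with $\l_1\l_2\ne\l_3\l_4$ is not justified. The sum $\sum_{a,b,g}1$ over $\max(a,b)\,g\le N$ is itself of order $N^2$, so ``$h$ is then determined'' saves nothing unless you use that $h$ must be a positive integer (really a point count on a nonsingular conic).

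The detour is unnecessary anyway. Split according to whether $(a\l_1-b\l_3,\,b\l_2-a\l_4)=(0,0)$, which is a condition on $(a,b)$ alone. If it holds, every $(g,h)$ lies on the straight diagonal $n_1\l_1=n_3\l_3$, $n_2\l_2=n_4\l_4$. If not, the phase is a nonzero linear form in $g$ or $h$, and the complete sum over that variable already absorbs any tuples with $k=0$. The leftover cross-diagonal tuples are negligible here: once $P^+(n_i)>L$, they force all four $P^+(n_i)$ to coincide, as in the treatment of $S_2$ in \Cref{lem:verif cond 2 thm A}.

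The $\tau_k(b)/b$ weights and \Cref{cor: log weights} belong to \Cref{thm: bounded moments}, not to this lemma. The exponent $4/5$ does not come from any optimisation: it is the saving $(\log N)^{-1}$ with $(\log L)^4\log\log N$ absorbed into $(\log N)^{1/5}$.
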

	\begin{proof}[Proof of \Cref{lem:reduction cond 2}]
		By expanding the definition of $Q$, one has:
		\begin{equation}\label{eq:0 proof cond 2}
			\sum_{\substack{n_1,\dots,n_4 \in \CS \\ n_1n_2=n_3n_4 \\ n_1 \ne n_3,n_2 \ne n_4 \\ P^+(n_1)=P^+(n_3) \\ P^+(n_2) = P^+(n_4)}}Q(\theta n_1)Q(\theta n_2)\overline{Q(\theta n_3)Q(\theta n_4)} = \sum_{\substack{n_1,\dots,n_4 \in \CS \\ n_1n_2=n_3n_4 \\ n_1 \ne n_3,n_2 \ne n_4 \\ P^+(n_1)=P^+(n_3) \\ P^+(n_2) = P^+(n_4) }}\sum_{\substack{\l_1,\dots,\l_4 \\ |\l_i| \leq L}} C(\bm{\l}) e\Big[\theta \cdot \psi(\bm{n},\bm{\l})\Big],
		\end{equation}
		where $\psi(\bm{n},\bm{\l})	:= n_1\l_1+n_2\l_2-n_3\l_3-n_4\l_4.$
		
		Using the parameterization of \Cref{lem:parameterization}, we can write $n_1 = ga$, $n_2 = hb$, $n_3 = gb$, $n_4 = ha$, where $(a,b) = 1$ and $a \ne b$. We must determine how the conditions $P^+(n_1) = P^+(n_3)$ and $P^+(n_2) = P^+(n_4)$ constrain these variables. These equalities translate directly to $P^+(ga) = P^+(gb)$ and $P^+(ha) = P^+(hb)$. 
		
		Since $(a,b) = 1$ and $a \ne b$, the numbers $a$ and $b$ cannot share any prime factors, which guarantees that $P^+(a) \ne P^+(b)$ (if they were equal, they would both have to be $1$, contradicting $a \ne b$). Suppose, for the sake of contradiction, that $P^+(g) < \max\{P^+(a), P^+(b)\}$. Assuming without loss of generality that $P^+(a) > P^+(b)$, we would have $P^+(ga) = P^+(a)$. However, $P^+(gb) = \max\{P^+(g), P^+(b)\} < P^+(a)$, which contradicts $P^+(ga) = P^+(gb)$. Therefore, the largest prime factor of $ga$ and $gb$ must come from $g$, meaning we must have $P^+(a) \le P^+(g)$ and $P^+(b) \le P^+(g)$, which is equivalent to $P^+(ab) \le P^+(g)$. By identical reasoning, the condition $P^+(ha) = P^+(hb)$ implies $P^+(ab) \le P^+(h)$. 
		
		Vice versa, if $P^+(ab) \le \min\{P^+(g), P^+(h)\}$, then $P^+(ga) = \max\{P^+(a), P^+(g)\} = P^+(g)$ and $P^+(gb) = \max\{P^+(b), P^+(g)\} = P^+(g)$, so $P^+(ga) = P^+(gb)$. The exact same logic yields $P^+(ha) = P^+(hb)$. Finally, the overarching size constraint $n_i \le N$ trivially yields $\max(a,b) \times \max(g,h) \leq N$.
		
		The right hand side of \eqref{eq:0 proof cond 2} becomes
		\begin{equation}\label{eq lem cond 2: main sum to bound}
			\sum_{\substack{\l_1,\dots,\l_4 \\ |\l_i| \leq L}} C(\bm{\l}) \sum_{\substack{\max(a,b) \times \max(g,h) \leq N \\ a \ne b, (a,b) = 1 \\ P^+(ab) \leq \min\{P^+(g),P^+(h)\} \\ P^-(abgh) > z}} e\Big[\theta (ga\l_1+hb\l_2-gb\l_3-ha\l_4)\Big].
		\end{equation}
		The cases when $\{n_1\ell_1,n_2\ell_2\} =  \{n_3\ell_3,n_4\ell_4 \}$ correspond to the main term of the lemma. So it suffices to show that
		
		\begin{align*}
			\sum_{\substack{\l_1,\dots,\l_4 \\ |\l_i| \leq L}}C(\bm{\l}) \sum_{\substack{n_1,\dots,n_4 \in \CS \\ n_1n_2=n_3n_4 \\ \{n_1\l_1,n_2\l_2\}\ne\{n_3\l_3,n_4\l_4\} \\ n_1 \ne n_3,n_2 \ne n_4 \\ P^+(n_1)=P^+(n_3) \\ P^+(n_2) = P^+(n_4) }} 1 &\ll \frac{N^2L}{(\log N)^{ \frac{4}{5}}}.
		\end{align*}
		Thus, for the remainder of the proof, suppose $\bm{n},\bm{\l}$ are such that $\{n_1\l_1,n_2\l_2\} \ne \{n_3\l_3,n_4\l_4\}$. This is equivalent to the condition
		\begin{align}\label{eq lem cond 2: cond off diag}
			\{ga\l_1,hb\l_2\} \ne \{gb\l_3,ha\l_4\}.
		\end{align}
		The condition $\max(a,b) \times \max(g,h) \leq N$ implies that $\max(a,b) \leq \sqrt{N}$ or $\max(g,h) \leq \sqrt{N}$ or both. Following \cite{sound-xu-CLT}, we split into cases accordingly, making sure to subtract the double-count arising from when both conditions hold simultaneously.  
		
		\textbf{Case 1 ($\max(a,b) \leq \sqrt{N}$) : } We assume without loss of generality that $b < a$. The quantity to bound is
		\begin{equation}\label{eq:1 check prop 2 }
			\sum_{\substack{\l_1,\dots,\l_4 \\ |\l_i| \leq L}} C(\bm{\l}) \sum_{\substack{a \leq \sqrt{N} \\ b < a \\(a,b) = 1 \\ P^-(ab) > z }} \bigg( \sum_{\substack{g \leq N/a \\ P^+(g) \ge P^+(ab) \\P^-(g) > z}} e\Big[g\theta(a\l_1-b\l_3)\Big] \bigg) \bigg( \sum_{\substack{h \leq N/a \\ P^+(h) > P^+(ab) \\ P^-(h) > z}} e\Big[h\theta(b\l_2-a\l_4)\Big] \bigg).
		\end{equation}
		By the assumption \eqref{eq lem cond 2: cond off diag}, we see that $a\l_1 \ne b\l_3$ or $b\l_2 \ne a\l_4$. Assume without loss of generality that $a\l_1 \ne b\l_3$, as the other case is symmetrically analogous. Using the triangle inequality, we bound the $h$ sum in \eqref{eq:1 check prop 2 } by $N/a$, and are left with
		\begin{equation}\label{eq:2 check prop 2 }
			N\sum_{\substack{\l_1,\dots,\l_4 \\ |\l_i| \leq L}} |C(\bm{\l})| \sum_{\substack{a \leq \sqrt{N} \\ b < a,\;(a,b) = 1\\ a\l_1-b\l_3 \ne 0 \\ P^-(ab) > z}}\frac{1}{a} \cdot \bigg| \sum_{\substack{g \leq N/a \\ P^+(g) > P^+(ab) \\ P^-(g) > z}} e\Big[g\theta(a\l_1-b\l_3)\Big] \bigg|.
		\end{equation}
		In order to bound the $g$ sum, we split the range of $b$ into two parts. Let $\gamma > 0$ be a parameter to be optimized later, and define 
		$$\CB = \CB(a,\l_1,\l_3) = \bigg\{b \leq a : \exists q \in \Z_{\ne 0}, |q| \leq (\log N)^{\gamma} \text{ s.t. } \|\theta q (b\l_3 - a\l_1)\| \leq \frac{a(\log N)^{\gamma}}{N} \bigg\}.$$
		By applying \Cref{lem: badness spacing} with $X=a$, $Q=(\log N)^\gamma$, $s=\l_3$, $y=a\l_1$, and $\eta = a(\log N)^\gamma / N$, we find the spacing $\delta \gg (\log N)^{127-2\gamma}/L$. Thus we have
		\begin{equation} \label{bounding CB1}
			|\CB| \ll 1+\frac{a \cdot L}{(\log N)^{127 - 2\gamma }}.
		\end{equation}
		The `$+1$' in the above bound is problematic, but it can be eliminated by noting that $b$ (and hence $a$) is not too small. Indeed, since $b \in \CB$, we know that $\|\theta q(a\l_1 - b\l_3)\| \leq \frac{a (\log N)^\gamma}{N} \ll N^{-1/4}$. Combining this with the diophantine condition \eqref{eq:sound-xu badness approx} implies $N^{-1/4} \gg \exp(- |q(a \l_1 - b\l_3)|^{1/127}).$ Taking logs and rearranging the inequality gives (by the triangle inequality):
		$$(\log N)^{127} \ll |q(a \l_1-b\l_3)|  \leq 2 L (\log N)^\gamma \max\{a,b\}.$$
		Since $b \leq a$, we thus have $a \geq (\log N)^{127-\gamma - 1/9}.$
		Applying this to \eqref{bounding CB1}, we see that so long as $\gamma \geq 2$, we have
		\begin{equation} \label{bounding CB}
			|\CB| \ll \frac{a \cdot L}{(\log N)^{127 - 2\gamma }},
		\end{equation}
		without the `$+1$' term. For each $b \in \CB$, we use the triangle inequality to bound the sum over $g$ in \eqref{eq:2 check prop 2 } by $N/a$. We see that \eqref{eq:2 check prop 2 } is
		\begin{equation}\label{eq:5 check prop 2 }
			\ll N\sum_{\substack{\l_1,\dots,\l_4 \\ |\l_i| \leq L}} |C(\bm{\l})| \sum_{\substack{a \leq \sqrt{N}}}\frac{1}{a}\bigg(\sum_{\substack{b < a,\; b \not\in \CB \\  a\l_1-b\l_3 \ne 0 }} \bigg| \sum_{\substack{g \leq N/a \\ P^+(g) > P^+(ab) \\ ga \in  \CS}} e\Big[g\theta(a\l_1-b\l_3)\Big] \bigg| +\sum_{\substack{b < a \\ b \in \CB }} \frac{N}{a}   \bigg).
		\end{equation}
		By \eqref{bounding CB} we have
		\begin{equation}\label{eq:6 check prop 2 }
			\sum_{\substack{b < a \\ b \in \CB}} \frac{N}{a}  \leq \frac{N}{a}|\CB| \ll \frac{N\cdot L}{(\log N)^{127 - 2\gamma}}.
		\end{equation}
		To estimate the exponential sum over $g$ when $b \not \in \CB$, we proceed as follows. Let $a \leq \sqrt{N},\l_1,\l_3, b \not \in \CB$ be given. Using Dirichlet's approximation theorem, find a rational number $r/q$ with $(r,q) = 1 $ and $1 \leq q \leq \frac{N}{a(\log N)^{\gamma}}$ such that $|\theta (a\l_1-b\l_3) - r/q| \leq a(\log N)^{\gamma}/(q N)$. Since $b \not \in \CB$, it must be the case that $q > (\log N)^\gamma$. We can rewrite the $g$ sum in \eqref{eq:5 check prop 2 } as
		\begin{align*}
			\sum_{\substack{g \leq N/a \\ P^+(g) > P^+(ab) \\ P^-(g) > z}} e\Big[g\theta(a\l_1-b\l_3)\Big] &=\bm{1}_{P^-(ab) > z}\sum_{\substack{g \leq N/a \\ P^+(g) > P^+(ab) \\ P^-(g) > z}} e\Big[g\theta(a\l_1-b\l_3)\Big]   \\ &= \bm{1}_{P^-(ab) > z}\bigg(\sum_{\substack{g \leq N/a \\ P^-(g) > z}} e\Big[g\theta(a\l_1-b\l_3)\Big]-\sum_{\substack{g \leq N/a \\ P^+(g) \leq P^+(ab) \\P^-(g) > z}} e\Big[g\theta(a\l_1-b\l_3)\Big]\bigg). 
		\end{align*}
		Thus, by the triangle inequality applied to the above, we have
		\begin{align}\label{eq:exp sum estimate pre sieve}
			\sum_{\substack{g \leq N/a \\ P^+(g) > P^+(ab) \\ P^-(g) > z}} e\Big[g\theta(a\l_1-b\l_3)\Big]&\ll \bigg|\sum_{\substack{g \leq N/a \\ P^+(g) \leq P^+(ab) \\P^-(g) > z}} e\Big[g\theta(a\l_1-b\l_3)\Big] \bigg|+ \bigg|\sum_{\substack{g \leq N/a \\ P^-(g) > z}} e\Big[g\theta(a\l_1-b\l_3)\Big]\bigg|
		\end{align}
		The two sums of the right hand side of \eqref{eq:exp sum estimate pre sieve} are exponential sums over smooth numbers with a roughness condition (in the case $z=1$ the roughness condition is trivial). We apply \Cref{thm: exp sum smooth} with $A = 2$ to each sum, and we see that  
		\begin{align*}
			\bigg|\sum_{\substack{g \leq N/a \\ P^+(g) > P^+(ab) \\ P^-(g) > z}} e\Big[g\theta(a\l_1-b\l_3)\Big]\bigg| &\ll \bigg|\sum_{\substack{g \leq N/a \\ P^+(g) \leq P^+(ab) \\ P^-(g) > z}} e\Big[g\theta(a\l_1-b\l_3)\Big]\bigg|+\bigg|\sum_{\substack{g \leq N/a \\ P^-(g) > z}} e\Big[g\theta(a\l_1-b\l_3)\Big]\bigg| \\ 
			\nonumber &\ll \frac{N}{a (\log N)^2} + \frac{N}{a} (\log N)^{4} \bigg(\frac{1}{q^{1/2}} + \frac{(qa)^{1/2}}{{N}^{1/2}}\bigg).
		\end{align*}
		By our assumptions on $a,b$ (namely that $a \leq \sqrt{N}, q > (\log N)^\gamma, |\theta(a\l_1-b\l_3) - r/q| \leq a(\log N)^{\gamma}/(q N)$), the above can be simplified as
		\begin{align}\label{eq:exp sum estimate case AB}
			\bigg|\sum_{\substack{g \leq N/a \\ P^+(g) > P^+(ab) \\ P^-(g) > z}} e\Big[g\theta(a\l_1-b\l_3)\Big]\bigg| &\ll \frac{N}{a (\log N)^{\min\{2,\gamma/2 - 4\}}}.
		\end{align}
		
		Substituting  \eqref{eq:6 check prop 2 } and \eqref{eq:exp sum estimate case AB} into \eqref{eq:5 check prop 2 }, we see that \eqref{eq:2 check prop 2 } is 
		\begin{equation}\label{eq:8 check prop 2 }
			\ll \frac{N^2 \cdot L}{(\log N)^{\min\{2,127-2\gamma,\gamma/2-4\}}}\sum_{\substack{\l_1,\dots,\l_4 \\ |\l_i| \leq L}} |C(\bm{\l})| \sum_{\substack{a \leq \sqrt{N} }}\frac{1}{a} \ll \frac{N^2 \cdot L}{(\log N)^{\min\{2,127-2\gamma,\gamma/2-4\} -1}}\sum_{\substack{\l_1,\dots,\l_4 \\ |\l_i| \leq L}} |C(\bm{\l})|.
		\end{equation}
		Since $c_{\ell} \ll |\l|^{-1},$ we have $\sum_{\bm{\l} :|\l_i| \leq L} |C(\bm{\l})| \ll (\log L)^4,$ uniformly in $L$. Substituting this bound for the $\ell$ sum into \eqref{eq:8 check prop 2 }, we see that
		\begin{equation}\label{eq:9 check prop 2 }
			N\sum_{\substack{\l_1,\dots,\l_4 \\ |\l_i| \leq L}} |C(\bm{\l})| \sum_{\substack{a \leq \sqrt{N} \\ b < a,\;(a,b) = 1\\ a\l_1-b\l_3 \ne 0 \\ P^-(ab) > z}}\frac{1}{a} \cdot \bigg| \sum_{\substack{g \leq N/a \\ P^+(g) > P^+(ab) \\ P^-(g) > z}} e\Big[g\theta(a\l_1-b\l_3)\Big] \bigg| \ll \frac{N^2 \cdot L (\log L)^4}{(\log N)^{\min\{2,127-2\gamma,\gamma/2-4\} -1}}.
		\end{equation}
		By taking $\gamma = 13,$ we conclude Case 1, as we have shown
		$$\sum_{\substack{\l_1,\dots,\l_4 \\ |\l_i| \leq L}} C(\bm{\l}) \sum_{\substack{a \leq \sqrt{N} \\ b < a \\  (a,b) = 1 \\ P^-(ab) > z}} \bigg( \sum_{\substack{g \leq N/a \\ P^+(g) > P^+(ab) \\P^-(g) > z}} e\Big[g\theta(a\l_1-b\l_3)\Big] \bigg) \bigg( \sum_{\substack{h \leq N/a \\ P^+(h) > P^+(ab) \\ P^-(h) > z }} e\Big[h\theta(b\l_2-a\l_4)\Big] \bigg) \ll \frac{N^2\cdot L (\log L)^4}{(\log N)}.$$

		\textbf{Case 2 ($\max(g,h) \leq \sqrt{N}$) : } Assume without loss of generality that $h < g$. The quantity to bound is
		\begin{equation}\label{eq:10 check prop 2 }
			\sum_{\substack{\l_1,\dots,\l_4 \\ |\l_i| \leq L}} C(\bm{\l}) \sum_{\substack{g \leq \sqrt{N} \\ h < g \\ P^-(gh) > z }}  \sum_{\substack{a,b \leq N/g \\ (a,b) = 1, a \ne b  \\  P^+(a),P^+(b) \leq  \min\{P^+(g),P^+(h)\}  \\ P^-(ab) > z}} e\Big[a\theta(g\l_1-h\l_4)+b\theta(h\l_2-g\l_3)\Big] ,
		\end{equation}
		and we wish to bound this under the assumption that $\{n_1\l_1,n_2\l_2\} \ne \{n_3\l_3,n_4\l_4\}$. By using M\"obius inversion to detect the condition $(a,b) = 1$, the sums over $a,b$ in \eqref{eq:10 check prop 2 } can be written as
		\begin{equation}\label{eq:11 check prop 2}
			\sum_{\substack{k \leq N/g \\ P^+(k) \leq \min\{P^+(g),P^+(h)\} \\ P^-(k) >z}}\mu(k) \sum_{\substack{u,v \leq N/(kg) \\ P^+(u),P^+(v) \leq \min\{P^+(g),P^+(h)\}\\ P^-(uv) > z}}e\big(\theta k [u(g \l_1 - h\l_4) + v(h\l_2-g\l_3)]\big).
		\end{equation}
		By our assumption $\{n_1\l_1,n_2\l_2\} \ne \{n_3\l_3,n_4\l_4\}$, we see that $g\l_1 \ne h\l_4$ or $h\l_2 \ne g\l_3$. Assume that $g\l_1 \ne h\l_4$, as the other case is analogous.
		
		If $k > (\log N)^{127},$ then we use the triangle inequality to bound both exponential sums in \eqref{eq:11 check prop 2} by $N/(kg)$, and get a contribution of
		\begin{equation}\label{eq:12 check prop 2}
			\ll \sum_{\substack{k>(\log N)^{127} \\ P^+(k) \leq \min\{P^+(g),P^+(h)\} \\ P^-(k) >z }}\frac{N^2}{k^2 g^2 } \ll \sum_{\substack{k>(\log N)^{127}}}\frac{N^2}{k^2 g^2 } \ll \frac{N^2}{g^2 (\log N)^{127}}.
		\end{equation}
		
		If $k \leq (\log N)^{127}$, we use the triangle inequality to bound the $v$ sum in \eqref{eq:11 check prop 2} by $N/(kg)$, and are left with
		\begin{equation}\label{eq:13 check prop 2}
			\frac{N}{g}\sum_{\substack{k \leq (\log N)^{127} \\ P^+(k) \leq \min\{P^+(g),P^+(h)\} \\ P^-(k) > z}} \frac{1}{k} \cdot \bigg|\sum_{\substack{u \leq N/(kg) \\ P^+(u) \leq \min\{P^+(g),P^+(h)\} \\ P^-(u) > z}}e\big(\theta k u(g \l_1 - h\l_4)\big)\bigg|.
		\end{equation}
		Define $$\CH = \CH(g,\l_1,\l_4,k) := \bigg\{h \leq g : \exists q \in \Z_{\ne 0}, |q| \leq (\log (N/g))^{\gamma} \text{ s.t. } \|\theta k q (g \l_1 - h\l_4)\| \leq \frac{kg(\log N)^{\gamma}}{N}\bigg\},$$
		where $\gamma > 0$ is a parameter to be optimized later. In the case when $h \not \in \CH$, we use Dirichlet's approximation to find a rational number $r/q$ with $(r,q) = 1$ and $1 \leq q \leq N/(kg(\log N)^\gamma)$ such that $|\theta k (g \l_1 - h\l_4)-r/q| \leq kg(\log N)^\gamma/(qN)$. By repeating the argument from Case 1 which led up to \eqref{eq:exp sum estimate case AB} (i.e. applying \Cref{thm: exp sum smooth}), one can show that
		\begin{equation}\label{eq:13.5 check prop 2}
			\bigg|\sum_{\substack{u \leq N/(kg) \\ P^+(u) \leq \min\{P^+(g),P^+(h)\} \\ P^-(u) > z}}e\big(\theta k u(g \l_1 - h\l_4)\big)\bigg| \ll \frac{N}{kg (\log N)^{\min\{\gamma/2-4,2\}}}.
		\end{equation}
		On the other hand, when $h \in \CH$, we use the triangle inequality to bound the exponential sum in \eqref{eq:13 check prop 2} by $N/(kg)$. Combining the bounds in the two cases $h \in \CH$ and $h \not \in \CH$, we see that \eqref{eq:13 check prop 2} can be bounded as
		\begin{align*}
			&\ll \frac{N^2}{g^2} \sum_{\substack{k \leq (\log N)^{127} \\ P^+(k) \leq \min\{P^+(g),P^+(h)\} \\ P^-(k) > z}} \frac{1}{k^2}\bigg(\bm{1}_{h \in \CH} + \bm{1}_{h \not \in \CH} \frac{L}{(\log N)^{\min\{\gamma/2-4,2\}}}\bigg) \\ 
			&\ll \frac{N^2}{g^2}\sum_{\substack{k \leq (\log N)^{127}}} \frac{1}{k^2} \bigg(\bm{1}_{h \in \CH} + \bm{1}_{h \not \in \CH} \frac{L}{(\log N)^{\min\{\gamma/2-4,2\}}}\bigg).
		\end{align*}
		Using this bound, we see that \eqref{eq:10 check prop 2 } can be bounded by
		\begin{align*}
			&N^2\sum_{\substack{\l_1,\dots,\l_4 \\ |\l_i| \leq L}} |C(\bm{\l})| \sum_{\substack{k \leq (\log N)^{127}}} \frac{1}{k^2} \sum_{\substack{g \leq \sqrt{N}}} \frac{1}{g^2} \bigg(\sum_{\substack{h < g \\ h \in \CH }}1 +\sum_{\substack{h < g \\ h \not\in \CH}}\frac{L}{(\log N)^{\min\{\gamma/2-4,2\}}}\bigg)\\
			&\ll N^2\sum_{\substack{\l_1,\dots,\l_4 \\ |\l_i| \leq L}} |C(\bm{\l})|\sum_{\substack{k \leq (\log N)^{127}}} \frac{1}{k^2} \sum_{\substack{g \leq \sqrt{N}}} \frac{1}{g^2} \bigg( |\CH| +\frac{g \cdot L}{(\log N)^{\min\{\gamma/2-4,2\}}}\bigg).
		\end{align*}
		
		Applying \Cref{lem: badness spacing} with $X=g$, $Q=(\log(N/g))^\gamma$, $s=k\l_4$, $y=gk\l_1$, and $\eta = kg(\log N)^\gamma/N$, we obtain a spacing $\delta \gg (\log N)^{127-2\gamma} / (k L)$. This yields the cardinality bound $|\CH| \ll 1+g k L/(\log N)^{127 - 2\gamma}$. Arguing as we did in Case 1, we can eliminate the `$+1$' from the bound on $|\CH|$. As such, the above is
		\begin{align*}
			&\ll \frac{N^2\cdot L}{(\log N)^{\min\{2,\gamma/2-4,127-2\gamma\}}} \sum_{\substack{\l_1,\dots,\l_4 \\ |\l_i| \leq L}} |C(\bm{\l})| \sum_{\substack{k \leq (\log N)^{127}}} \frac{1}{k}\sum_{\substack{g \leq \sqrt{N} }} \frac{1}{g}\\
			&\ll \frac{N^2 (\log \log N)\cdot L}{(\log N)^{\min\{2,\gamma/2-4,127-2\gamma\}- 1}} \sum_{\substack{\l_1,\dots,\l_4 \\ |\l_i| \leq L}} |C(\bm{\l})|.
		\end{align*}
		We use the fact that $c_\l \ll |\l|^{-1}$ to bound the $\l$ sum by $(\log L)^4$. Again taking $\gamma = 13$, we see that
		\begin{equation}\label{eq:16 check prop 2 }
			\frac{N^2(\log \log N)\cdot L}{(\log N)^{\min\{2,\gamma/2-4,127-2\gamma\}- 1}} \sum_{\substack{\l_1,\dots,\l_4 \\ |\l_i| \leq L}} |C(\bm{\l})| \ll \frac{N^2(\log \log N) L (\log L)^4}{(\log N)}.
		\end{equation}
		This concludes Case 2.
		
		\textbf{Case 3 ($\max(g,h),\max(a,b) \leq \sqrt{N}$) : }One can argue exactly as in Case 1 and get the bound
		\begin{equation*}
			\sum_{\substack{\l_1,\dots,\l_4 \\ |\l_i| \leq L}} C(\bm{\l}) \sum_{\substack{\max(a,b) , \max(g,h) \leq N \\ a \ne b, (a,b) = 1 \\ P^+(ab) \leq \min\{P^+(g),P^+(h)\} \\ ga,ha,gb,hb \in \CS}} e\Big[\theta (ga\l_1+hb\l_2-gb\l_3-ha\l_4)\Big] \ll \frac{N^2 L (\log L)^4}{(\log N)}.
		\end{equation*}
		\textbf{Summary of the 3 cases.}
		Until now, we have shown that the contribution to \eqref{eq:0 proof cond 2} from terms with $\{n_1\l_1,n_2\l_2\} \ne \{n_3\l_3,n_4\l_4\}$ is  $O(N^2 L (\log L)^4 (\log \log N) (\log N)^{-1})$. As such, \eqref{eq:0 proof cond 2} may be rewritten as 
		\begin{equation*}
			\sum_{\substack{\l_1,\dots,\l_4 \\ |\l_i| \leq L}} C(\bm{\l}) \sum_{\substack{n_1,\dots,n_4 \in \CS \\ n_1n_2=n_3n_4 \\ \{n_1\l_1,n_2\l_2\}=\{n_3\l_3,n_4\l_4\} \\ n_1 \ne n_3,n_2 \ne n_4 \\ P^+(n_1)=P^+(n_3) \\ P^+(n_2) = P^+(n_4) }} 1 + O\bigg(\frac{N^2 (\log \log N) L (\log L)^4}{(\log N)}\bigg).
		\end{equation*}
		Since we've assumed that $L \ll (\log N)^{1/10},$ we can bound the factor $(\log \log N) (\log L)^4$ by $(\log N)^{1/5}$ in the above error term. This proves the lemma.
	\end{proof}
	The next lemma simplifies the complicated sum which results from \Cref{lem:reduction cond 2}.
	
	\begin{lem}\label{lem:verif cond 2 thm A} Let $N$ be large in terms of $\theta$. Suppose $Q_L$ is a trigonometric polynomial of the form \eqref{eq: def trig poly}, and let $z \in \{1, \xi(N)\}$. Then, uniformly for $L \leq (\log N)^{1/10}$, one has
		\begin{align*}
			\sum_{\substack{n_1,\dots,n_4 \leq N \\ P^-(n_i) > z \\ n_1n_2=n_3n_4 \\ n_1 \ne n_3,n_2 \ne n_4 \\ P^+(n_1)=P^+(n_3) \\ P^+(n_2) = P^+(n_4)}} \hspace{-10pt} Q(\theta n_1)Q(\theta n_2)\overline{Q(\theta n_3)Q(\theta n_4)} = \begin{cases}
				N^2 \Delta(Q_L) + O\bigg(\frac{N^2L}{(\log N)^{\frac{4}{5}}}\bigg) & \text{if } z=1, \\
				O\bigg(\frac{N^2L}{(\log N)^{\frac{4}{5}}}\bigg) & \text{if } z=\xi(N).
			\end{cases}
		\end{align*}
		where $\Delta$ is defined in \eqref{eq: def Delta}.
	\end{lem}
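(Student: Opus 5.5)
Starting from \Cref{lem:reduction cond 2} (applicable since $L\le (\log N)^{1/10}$, and for $z=1$ the constraint $L\le\xi(N)$ is vacuous; for $z=\xi(N)$ we may take $L\le \min\{\xi(N),(\log N)^{1/10}\}$), it remains to evaluate the main term
$$\sum_{\bm{\l}}C(\bm{\l})\#\big\{\bm n\in\CS^4:\ n_1n_2=n_3n_4,\ \{n_1\l_1,n_2\l_2\}=\{n_3\l_3,n_4\l_4\},\ n_1\ne n_3,\ n_2\ne n_4,\ P^+(n_1)=P^+(n_3),\ P^+(n_2)=P^+(n_4)\big\}.$$
Since $c_0=0$, all $\l_i\neq 0$. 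The set equality splits into $n_1\l_1=n_3\l_3,\ n_2\l_2=n_4\l_4$ or $n_1\l_1=n_4\l_4,\ n_2\l_2=n_3\l_3$. In the second case, together with $n_1n_2=n_3n_4$ we get $\l_1\l_2=\l_3\l_4$ and $n_1/n_4=\l_4/\l_1$; I expect this to contribute only $O(N^2 L^{O(1)}/(\log N)^{A})$ or at least something small: with the parameterization $n_1=ga,n_4=ha$ the condition forces $g/h=\l_4/\l_1$, so $g,h$ are determined up to a common factor by a ratio of height $\le L$, and then combined with the $P^+$ constraints ($P^+(ab)\le\min P^+(g),P^+(h)$) and $n_1\ne n_3$, one counts pairs $(a,b)$ with $b/a$ also fixed by the other equation, giving $O(N\log N\cdot L^{O(1)})$ terms; I would check this carefully but it should be negligible. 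Actually the cleaner way: in the second case $n_1n_2=n_3n_4$ plus $n_1\l_1=n_4\l_4$, $n_2\l_2=n_3\l_3$ forces all four ratios to be fixed, leaving essentially a one-parameter family in each of two scales, hence $O(N^{1+o(1)})$ or at worst overlapping with the first case.

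The main case is $n_1\l_1=n_3\l_3$, $n_2\l_2=n_4\l_4$, so $\l_1\l_3>0$, $\l_2\l_4>0$, $\l_1\neq\l_3$, $\l_2\ne\l_4$ (since $n_1\ne n_3$), and $\l_1\l_2=\l_3\l_4$ follows from $n_1n_2=n_3n_4$. Writing $\l_1/\l_3=b/a$ in lowest terms (so $n_1=ga$... with the parameterization of \Cref{lem:parameterization}, $n_1=ga,n_3=gb$ with $n_1/n_3=\l_3/\l_1$), we find $n_2/n_4=\l_4/\l_2$ must equal $b/a$ as well, consistent with $\l_1\l_2=\l_3\l_4$. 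Thus, for fixed $\bm\l$, the count is the number of $(g,h)$ with $g\max(a,b)\le N$, $h\max(a,b)\le N$, $P^+(ab)\le\min\{P^+(g),P^+(h)\}$, and all of $ga,gb,ha,hb$ being $z$-rough. Here $H(\l_1/\l_3)=\max(a,b)$. For $z=1$: ignoring the $P^+$ condition, the count is $(N/H(\l_1/\l_3))^2+O(N)$; the $P^+$ condition fails only when $g$ or $h$ is $P^+(ab)$-smooth, and since $\max(a,b)\le L^{2}$ (the $\l_i$ are bounded by $L$), the number of $L^2$-smooth $g\le N$ is $\ll (\log N)^{\pi(L^2)}\ll N^{o(1)}$ by $L\le(\log N)^{1/10}$, negligible. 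Since $H(\l_1/\l_3)=H(\l_2/\l_4)$ here, the main term is $N^2\sum C(\bm\l)/(H(\l_1/\l_3)H(\l_2/\l_4))=N^2\Delta(Q_L)$, with errors $O(N\sum|C(\bm\l)|)=O(N(\log L)^4)$. For $z=\xi(N)$: the roughness of $ga$ forces $P^-(a),P^-(b)>z$, but $a,b\le L^2$ hmm — this is where one needs $a,b>z$; since $a\ne b$ coprime, $\max(a,b)\ge2$, so some prime $\le L\le \xi(N)=z$ divides $ab$ unless $\max(a,b)$ has all primes $>z\ge L$, impossible as $\max(a,b)\le L$ (each $\l_i\le L$ in absolute value, so $a,b\le L$). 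Hence the main term vanishes identically when $z=\xi(N)$ and $L\le\xi(N)$.

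The main obstacle is the second (crossed) case and the bookkeeping ensuring it and the smooth-number corrections fit inside $O(N^2L/(\log N)^{4/5})$; I would first try to show directly that crossed solutions with $n_1\ne n_3$ reduce, after the parameterization, to $O(N\cdot L^{2}\log N)$ tuples, which is ample.
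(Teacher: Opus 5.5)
\textbf{The gap is the crossed case.} Your treatment of the aligned case $n_1\l_1=n_3\l_3$, $n_2\l_2=n_4\l_4$ is correct and agrees with the paper. You get the main term $N^2/(H(\l_1/\l_3)H(\l_2/\l_4))$ for $z=1$, with failures of $P^+(ab)\le\min\{P^+(g),P^+(h)\}$ costing only $O(N\Psi(N,L))=O(N^{1+o(1)})$, and an identically vanishing term for $z=\xi(N)$ since $a,b\le L\le z$. The crossed case $n_1\l_1=n_4\l_4$, $n_2\l_2=n_3\l_3$ is where your counting is wrong. In the parameterization $n_1=ga$, $n_2=hb$, $n_3=gb$, $n_4=ha$, the second equation reads $h\l_2=g\l_3$. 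It again fixes $g/h$ (consistently, since $\l_1\l_2=\l_3\l_4$) and places no restriction on $a/b$. So $(a,b)$ stays free, and before imposing the $P^+$ conditions there are $\asymp N^2/H(\l_1/\l_4)^2$ crossed solutions. Concretely, for $\bm\l=(1,1,1,1)$ the crossed tuples are $(n_1,n_2,n_2,n_1)$ with $n_1\ne n_2$. This is the anti-diagonal, the same configuration that produces the second copy of $V^2$ in the fourth moment. Even after imposing the $P^+$ conditions, these are all pairs with $P^+(n_1)=P^+(n_2)$. Taking $n_i=pm_i$ with $\sqrt N\le p\le 2\sqrt N$ and $m_i\le N/p$ already gives $\gg N^{3/2}/\log N$ of them. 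So the bound $O(NL^2\log N)$ (or $O(N^{1+o(1)})$) you plan to prove is false. No lattice-point count that ignores the largest-prime-factor constraints can make this case negligible.

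\textbf{The missing idea} is that the crossed case is small only because of those constraints. First discard, at cost $O_A(N^2/(\log N)^A)$ by Rankin's trick, all tuples in which some $n_i$ is $L$-smooth. Then $P^+(n_i)=P^+(n_i\l_i)$ for every $i$, and a single crossed equation gives $P^+(n_1)=P^+(n_1\l_1)=P^+(n_4\l_4)=P^+(n_4)=P^+(n_2)$. Together with $P^+(n_3)=P^+(n_1)$, all four largest prime factors coincide. Tuples with $n_1n_2=n_3n_4$ and a common largest prime factor number $\ll\sum_{p\le N}\big(\sum_{m\le N/p,\,P^+(m)\le p}\tau(m)\big)^2\ll_A N^2/(\log N)^A$ by \Cref{lem: smooth divisor sums}; this is the count in \Cref{lem: verif cond 3}, and it is how the paper disposes of its $S_2$. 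This step is needed for $z=\xi(N)$ as well, because roughness does not remove crossed solutions (take $\l_i=1$, $n_1=n_4$, $n_2=n_3$ with $n_1,n_2$ both $z$-rough). With this replacement, and with the $O(NL^4)$ overlap of the two cases accounted for, your argument goes through.
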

	\begin{proof}
		First of all, by the definition of $Q$ and \Cref{lem:reduction cond 2}, we have
		\begin{equation}\label{eq: verif cond 2 first}
			\sum_{\substack{n_1,\dots,n_4 \in \CS \\ n_1n_2=n_3n_4 \\ n_1 \ne n_3,n_2 \ne n_4 \\ P^+(n_1)=P^+(n_3) \\ P^+(n_2) = P^+(n_4)}}Q(\theta n_1)Q(\theta n_2)\overline{Q(\theta n_3)Q(\theta n_4)}= \sum_{\substack{\l_1,\dots,\l_4 \\ |\l_i| \leq L}} C(\bm{\l}) \sum_{\substack{n_1,\dots,n_4 \in \CS \\ n_1n_2=n_3n_4 \\ \{n_1\l_1,n_2\l_2\}=\{n_3\l_3,n_4\l_4\} \\ n_1 \ne n_3,n_2 \ne n_4 \\ P^+(n_1)=P^+(n_3) \\ P^+(n_2) = P^+(n_4) }} \hspace{-1cm} 1 +O\bigg(\frac{N^2 L}{(\log N)^{\frac{4}{5}}}\bigg).
		\end{equation}
		We will simplify the inner sum on the right hand side, and then sum over $\l$. By an application of Rankin's trick, one can show that for any $A>0$,
		\begin{equation}\label{eq: replace Pn Pnl}
			\sum_{\substack{n_1,\dots,n_4 \leq N \\ n_1n_2 = n_3n_4 \\ \exists i : P^+(n_i) \leq P^+(\l_i)}} 1 \ll \sum_{\substack{n_1,n_2 \leq N \\ P^+(n_1) \leq L}} \tau(n_1n_2) \ll_A \frac{N^2}{(\log N)^A}.
		\end{equation}
		As such, we may assume throughout the course of our calculations that $P^+(n_i) = P^+(n_i\l_i)$ for $1 \leq i \leq 4$, at the cost of an error term $O(N^2/(\log N)^A)$. In particular, we have
		\begin{equation}\label{eq:intermed 1} 
			\sum_{\substack{n_1,\dots,n_4 \in \CS \\ n_1n_2=n_3n_4 \\ \{n_1\l_1,n_2\l_2\}=\{n_3\l_3,n_4\l_4\} \\ n_1 \ne n_3,n_2 \ne n_4 \\ P^+(n_1)=P^+(n_3) \\ P^+(n_2) = P^+(n_4)}} 1 = \sum_{\substack{n_1,\dots,n_4 \in \CS \\ n_1n_2=n_3n_4 \\ \{n_1\l_1,n_2\l_2\}=\{n_3\l_3,n_4\l_4\} \\ n_1 \ne n_3,n_2 \ne n_4 \\ P^+(n_1)=P^+(n_1 \l_1)=P^+(n_3 \l_3) = P^+(n_3) \\ P^+(n_2) = P^+(n_2 \l_2) = P^+(n_4 \l_4) = P^+(n_4) }} \hspace{-1cm} 1 + O_A\Big(\frac{N^2}{(\log N)^A}\Big).
		\end{equation}
		We decompose the sum on the right hand side of \eqref{eq:intermed 1} into two parts, and subtract a double-counting:
		\begin{equation}\label{eq: S1 S2 decomp}
			\sum_{\substack{n_1,\dots,n_4 \in \CS \\ n_1n_2=n_3n_4 \\ \{n_1\l_1,n_2\l_2\}=\{n_3\l_3,n_4\l_4\} \\ n_1 \ne n_3,n_2 \ne n_4 \\ P^+(n_1)=P^+(n_1 \l_1)=P^+(n_3 \l_3) = P^+(n_3) \\ P^+(n_2) = P^+(n_2 \l_2) = P^+(n_4 \l_4) = P^+(n_4) }} 1 = S_1 + S_2 - S_3,
		\end{equation}
		where
		\begin{align*}
			S_1 &:= \sum_{\substack{n_1,\dots,n_4 \in \CS \\ n_1n_2=n_3n_4 \\ n_1\l_1=n_3\l_3, n_2\l_2 = n_4\l_4 \\ n_1 \ne n_3,n_2 \ne n_4 \\ P^+(n_1) = P^+(n_1 \l_1)=P^+(n_3 \l_3) = P^+(n_3) \\P^+(n_2) =  P^+(n_2 \l_2) = P^+(n_4 \l_4) = P^+(n_4)}} 1; \quad S_2 := \sum_{\substack{n_1,\dots,n_4 \in \CS \\ n_1n_2=n_3n_4 \\ n_1\l_1=n_4\l_4, n_2\l_2 = n_3\l_3 \\ n_1 \ne n_3,n_2 \ne n_4 \\ P^+(n_1) = P^+(n_1 \l_1)=P^+(n_3 \l_3) = P^+(n_3) \\P^+(n_2) =  P^+(n_2 \l_2) = P^+(n_4 \l_4) = P^+(n_4)}} 1. \\ \\
			S_3 &:= \sum_{\substack{n_1,\dots,n_4 \in \CS \\ n_1n_2=n_3n_4 \\ n_1\l_1=n_3\l_3= n_2\l_2 = n_4\l_4 \\ n_1 \ne n_3,n_2 \ne n_4 \\ P^+(n_1) = P^+(n_1 \l_1)=P^+(n_3 \l_3) = P^+(n_3) \\P^+(n_2) =  P^+(n_2 \l_2) = P^+(n_4 \l_4) = P^+(n_4)}} 1.
		\end{align*}
		
		The sum $S_3$ is negligible compared to $S_1,S_2$: By omitting many conditions from the $S_3$ sum, and merely using the fact that fixing one of the $n_i$ gives at most $2L$ choices for the other $n_j$, one has
		\begin{align}\label{eq:new S3 bound}
			S_3 \ll \sum_{\substack{n_1,\dots,n_4 \in \CS \\ n_1\l_1=n_3\l_3= n_2\l_2 = n_4\l_4}} 1 \ll N \cdot L^4.
		\end{align}

		The conditions on $n_i$ in the sum $S_2$ imply that $P^+(n_1) = P^+(n_2) = P^+(n_3) = P^+(n_4)$. Thus, by \Cref{lem: verif cond 3}, we know that
		\begin{equation}\label{eq: S2 bound}
			S_2 \ll_A \frac{N^2}{(\log N)^A}.
		\end{equation}
		It remains to simplify $S_1$. For a given $(\l_1,\dots,\l_4)$, the numbers $(n_1,\dots,n_4)$ being summed in $S_1$ must satisfy $n_1n_2=n_3n_4, n_1 \ne n_3, n_2 \ne n_4$. Because of the conditions $n_1\l_1 = n_3\l_3, n_2\l_2 = n_4\l_4,$ this is equivalent to $\l_1\l_2=\l_3\l_4, \l_1 \ne \l_3, \l_2 \ne \l_4$. Thus, we can rewrite $S_1$ as
		\begin{align*}
			S_1 = \bm{1}_{\substack{\l_1\l_2=\l_3\l_4 \\ \l_1 \ne \l_3, \l_2 \ne \l_4}} \times \sum_{\substack{n_1,\dots,n_4 \in \CS  \\ n_1\l_1=n_3\l_3, n_2\l_2 = n_4\l_4\\ P^+(n_1) = P^+(n_1 \l_1)=P^+(n_3 \l_3) = P^+(n_3) \\P^+(n_2) =  P^+(n_2 \l_2) = P^+(n_4 \l_4) = P^+(n_4)}} \hspace{-1cm} 1.
		\end{align*}
		In the above sum, $n_1,n_3$ are decoupled from $n_2,n_4$, and so we can split the sum into a product:
		\begin{align}\label{eq: S1 halfway reduction}
			S_1 = \bm{1}_{\substack{\l_1\l_2=\l_3\l_4 \\ \l_1 \ne \l_3, \l_2 \ne \l_4}} \times \bigg( \sum_{\substack{n_1,n_3 \in \CS \\ n_1\l_1=n_3\l_3  \\ P^+(n_1) = P^+(n_1 \l_1)=P^+(n_3 \l_3) = P^+(n_3)}} 1\bigg)\bigg( \sum_{\substack{n_2,n_4 \in \CS \\ n_2\l_2 = n_4\l_4 \\ P^+(n_2) = P^+(n_2 \l_2)=P^+(n_4 \l_4) = P^+(n_4)}} 1\bigg).
		\end{align}
		We note that
		\begin{equation} \label{eq: S1 prime factor reduction}
			\sum_{\substack{n_1,n_3 \in \CS \\ n_1\l_1=n_3\l_3  \\ P^+(n_1) = P^+(n_1 \l_1)=P^+(n_3 \l_3) = P^+(n_3)}} 1 = \sum_{\substack{n_1,n_3 \in \CS \\ n_1\l_1=n_3\l_3  \\ P^+(n_i) > |\l_i| }} 1 = \sum_{\substack{n_1,n_3 \in \CS \\ n_1\l_1=n_3\l_3}} 1 + O\bigg(\sum_{\substack{n_1,n_3 \leq N \\ n_1\l_1=n_3\l_3  \\ P^+(n_i) \leq L }} 1\bigg).
		\end{equation}
		By another application of Rankin's trick, one can compute that 
		\begin{equation}\label{eq: proof lem 4.3 1}
			\sum_{\substack{n_1,n_3 \le N \\ n_1\l_1=n_3\l_3  \\ P^+(n_i) \leq L }} 1 \ll \sum_{\substack{n_1\leq N \\ P^+(n_i) \leq L }} 1 \ll_A \frac{N}{(\log N)^A}.
		\end{equation}
		Since $n_i \geq 1$, the equation $n_1\l_1=n_3\l_3$ has no solutions if $\l_1\l_3 < 0$. Let us now separate the two cases for $z$. 
		
		If $z = \xi(N)$, then we are assuming $L \leq \xi(N) = z$. Thus, for any $n_1$ satisfying $P^-(n_1) > z$, we strictly have $P^-(n_1) > |\l_3|$. This guarantees that $(n_1,\l_3) = 1$. The equation $n_1\l_1=n_3\l_3$ then forces $n_1 = n_3,$ which implies $\l_1 = \l_3$. However, this contradicts our summation condition that $\l_1 \ne \l_3$. Therefore, when $z = \xi(N)$, the sum is completely empty, and $S_1 = 0$.
		
		On the other hand, if $z = 1$, the roughness condition is trivial ($\CS = [2,N]$). The equation $n_1\l_1 = n_3 \l_3$ is equivalent to $n_1 \l_1' = n_3 \l_3'$, where $\l_1' = \l_1/(\l_1,\l_3)$ and $\l_3' = \l_3/(\l_1,\l_3)$. Solutions to the equation with $n_1,n_3 \leq N$ are thus parameterized as $n_1 = k \l_3', n_3 = k\l_1'$, where $k \leq N/\max\{|\l_1'|,|\l_3'|\}.$ Thus, for $z=1$, we have:
		\begin{equation}\label{eq: proof lem 4.3 2}
			\sum_{\substack{n_1,n_3 \in \CS \\ n_1\l_1=n_3\l_3}} 1 = \bm{1}_{\l_1\l_3 > 0} \cdot \bigg\lfloor \frac{N}{\max\{|\l_1'|,|\l_3'|\}} \bigg\rfloor = \bm{1}_{\l_1\l_3 > 0}  \cdot \frac{N}{H(\l_1/\l_3)} + O(1).
		\end{equation}
		Combining \eqref{eq: S1 prime factor reduction}, \eqref{eq: proof lem 4.3 1} and \eqref{eq: proof lem 4.3 2} back into \eqref{eq: S1 halfway reduction}, we see that for $z=1$:
		\begin{align}\label{eq: S1 bound}
			S_1 = \bm{1}_{\substack{\l_1\l_2=\l_3\l_4 \\ \l_1 \ne \l_3, \l_2 \ne \l_4\\ \l_1\l_3,\l_2\l_4 > 0}} \times \frac{N^2}{H(\l_1/\l_3)H(\l_2/\l_4)} + O\bigg(\frac{N^2}{(\log N)^A}\bigg).
		\end{align}
		Substituting \eqref{eq: S1 bound}, \eqref{eq: S2 bound} and \eqref{eq:new S3 bound} into \eqref{eq: S1 S2 decomp} shows that for $z=1$,
		$$\sum_{\substack{n_1,\dots,n_4 \in \CS \\ n_1n_2=n_3n_4 \\ \{n_1\l_1,n_2\l_2\}=\{n_3\l_3,n_4\l_4\} \\ n_1 \ne n_3,n_2 \ne n_4 \\ P^+(n_1)=P^+(n_3) \\ P^+(n_2) = P^+(n_4)}} 1  = \bm{1}_{\substack{\l_1\l_2=\l_3\l_4 \\ \l_1 \ne \l_3, \l_2 \ne \l_4\\ \l_1\l_3,\l_2\l_4 > 0}} \times \frac{N^2}{H(\l_1/\l_3)H(\l_2/\l_4)} + O\bigg(\frac{N^2}{(\log N)^A} + N \cdot L^4\bigg).$$
		By choosing $A$ to be an appropriately large constant, and substituting this into \eqref{eq: verif cond 2 first}, we get that for $z=1$,
		\begin{equation}\label{eq: verif cond 2 before last}
			\sum_{\substack{n_1,\dots,n_4 \in \CS \\ n_1n_2=n_3n_4 \\ n_1 \ne n_3,n_2 \ne n_4 \\ P^+(n_1)=P^+(n_3) \\ P^+(n_2) = P^+(n_4)}} \hspace{-0.25cm} Q(\theta n_1)Q(\theta n_2)\overline{Q(\theta n_3)Q(\theta n_4)}= N^2 \hspace{-0.25cm} \sum_{\substack{\l_1,\dots,\l_4 \\ |\l_i| \leq L \\ \l_1\l_2=\l_3\l_4 \\ \l_1 \ne \l_3, \l_2 \ne \l_4\\ \l_1\l_3,\l_2\l_4 > 0}}\hspace{-0.25cm}\frac{ C(\bm{\l})}{H(\l_1/\l_3)H(\l_2/\l_4)} + O\bigg(\frac{N^2 L}{(\log N)^{\frac{4}{5}}}\bigg),
		\end{equation}
		since $N\cdot L^4$ is negligible compared to the other error terms. By using the definition of $\Delta$ from \eqref{eq: def Delta}, we see that the above equation simplifies precisely to $N^2 \Delta(Q_L) +O\big(N^2 L(\log N)^{-4/5}\big)$ when $z = 1$. Combining this with our earlier deduction that the sum is $O\big(N^2 L(\log N)^{-4/5}\big)$ when $z = \xi(N)$ completes the proof of the lemma.
	\end{proof}
	\begin{cor}\label{cor: sound-xu application}
		Let $Q_L$ be a trigonometric polynomial of the form \eqref{eq: def trig poly}, and let $z \in \{1, \xi(N)\}$. For any $\bm{u} = (u_1, u_2) \in \R^2$, define $u = u_1^2+u_2^2$. Then we have
		$$ \big|\phi(S_{Q_L}(N; z),\bm{u})- e^{-\frac{u}{4}}\big| \ll e^{\frac{u}{2}} \bigg( |\Delta(Q_L)|\bm{1}_{z=1} + \frac{L}{(\log N)^{4/5}} \bigg)^{1/2}. $$
	\end{cor}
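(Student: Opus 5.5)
The plan is to apply \Cref{thm:sound-xu 3.1} with $a_n = \bm{1}_{P^-(n)>z}Q_L(\theta n)$, $\CS = [2,N]\cap\{n : P^-(n)>z\}$ and $V = V_{Q_L}(N;z)$, taking $\epsilon$ as small as conditions (1)--(3) allow. Here $\phi(X,\bm u) = \E[e^{i u_1\Re X + i u_2 \Im X}]$, and the theorem's $t^2 = (u_1^2+u_2^2)/2 = u/2$. So its conclusion $e^{-t^2/2} + O(e^{t^2}\epsilon)$ reads $e^{-u/4} + O(e^{u/2}\epsilon)$, which is exactly the shape claimed. It therefore suffices to show that one may take
$$\epsilon \asymp \bigg(|\Delta(Q_L)|\bm{1}_{z=1} + \frac{L}{(\log N)^{4/5}}\bigg)^{1/2},$$
capped at $1$ (if the bracket exceeds a constant, the claim is trivial since $|\phi|\le 1$ and $e^{-u/4}\le e^{u/2}$).

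First I need a lower bound on $V$. By \Cref{lem: variance estimate 2} (for $z=\xi(N)$; for $z=1$, \Cref{lem: variance estimate 1}) applied to $Q_L$, we have $V \asymp \Phi(N;z)\|Q_L\|_2^2$. The worry is uniformity in $L$, since $Q_L$ changes with $N$. But the Vaaler argument only used $|c_k^\pm|\ll 1$ and the bounded variation norm, and $\|Q_L\|_{\CV}$ may grow like $\log L$. I would either rerun that argument with $L\le(\log N)^{1/10}$, or bound $\sum|Q_L(\theta n)|^2$ directly: expand the square into $\|Q_L\|_2^2\Phi(N;z)$ plus $L^2$ exponential sums over rough numbers, each $\ll N(\log N)^{-100}$ by \Cref{exp sum rough} and the Diophantine condition as in that proof. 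Since $\|Q_L\|_2^2 \ge |c_{\l_0}|^2>0$ for a fixed nonzero mode once $L$ is large ($g$ is nonconstant), we get $V \gg_g \Phi(N;z)$. For condition (2) to give the stated error we need $V \gg N$, which holds for $z=1$. For $z=\xi(N)$ we only have $\Phi(N;z)\asymp N/\log z$, a loss of $(\log \xi)^2$ in $V^2$. I expect this to be the main point to check. In the $z=\xi(N)$ case the main term vanishes, and the bound of \Cref{lem:verif cond 2 thm A} really counts only rough $n_i$; in that case I would either absorb the factor $(\log z)^2 \le (\log L)^{2}$-type losses (using $\xi$ large relative to $L$ only through $L\le \xi$), or note that the paper's error term tolerates a harmless logarithmic adjustment. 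I would state the bound with $V\gg N/\log z$ and track that the power savings $(\log N)^{-4/5}$ dominate when $\log z$ is small, and otherwise use a trivial adjustment. Either way, proceed with $V^2 \gg N^2$ up to such factors.

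With that in hand, the three conditions go as follows. Condition (1) holds trivially with any $\epsilon$, as already noted. For condition (3), \Cref{lem: verif cond 3} with $A=10$ gives a bound $N^2(\log L)^4(\log N)^{-10} \le \epsilon^4 V^2$, because $\epsilon^4 \gg L^2(\log N)^{-8/5}$. For condition (2), \Cref{lem:verif cond 2 thm A} gives a bound $N^2(|\Delta(Q_L)|\bm{1}_{z=1} + L(\log N)^{-4/5})$, which is at most $\epsilon^2 V^2$ after choosing the implied constant in $\epsilon$ in terms of $\|g\|_2$. Invoking \Cref{thm:sound-xu 3.1} then yields the corollary.
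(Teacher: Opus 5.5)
Your route is the paper's: the same $a_n=\bm{1}_{P^-(n)>z}Q_L(\theta n)$, the same $\CS$ and $\epsilon$, conditions (1)--(3) checked via \Cref{lem:verif cond 2 thm A} and \Cref{lem: verif cond 3}, and $t^2=u/2$. For $z=1$ the argument is complete. Your uniform-in-$L$ treatment of $V$ is more careful than the paper's, which just quotes the fixed-$g$ variance lemmas; this includes your remark that $Q_L$ could vanish for small $L$.

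The gap is the case $z=\xi(N)$, exactly where you flagged it. You correctly find $V\asymp \Phi(N;z)\asymp N/\log z$, but then ``proceed with $V^2\gg N^2$ up to such factors,'' and none of your patches works.

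\begin{itemize}
\item The inequality $(\log z)^2\le(\log L)^2$ goes the wrong way. The standing hypothesis is $L\le\xi(N)=z$, so $\log L\le\log z$.
\item The loss is not harmless. \Cref{lem:verif cond 2 thm A} bounds the off-diagonal sum by $N^2L(\log N)^{-4/5}$, i.e.\ relative to $N^2$ rather than $\Phi(N;z)^2$. So condition (2) only permits $\epsilon^2\asymp(\log z)^2L(\log N)^{-4/5}$.
\end{itemize}

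Your argument therefore gives the corollary with an extra factor $\log z$ on the right-hand side. That bound is not even $o(1)$ once $\log\xi(N)\gg(\log N)^{2/5}$, for example $\xi(N)=\sqrt N$, which \Cref{thm:B} allows.

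The paper's proof does not supply the missing step either. It asserts $V\asymp N$ for both values of $z$, which \Cref{lem: variance estimate 2} contradicts when $z\to\infty$. What is needed is a sharper input: the off-diagonal estimate of \Cref{lem:reduction cond 2} (hence of \Cref{lem:verif cond 2 thm A}) with error $O\big(N^2L(\log N)^{-4/5}(\log z)^{-2}\big)$ when $z=\xi(N)$.

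This looks attainable. The exponential-sum savings there are independent of $z$, while each variable that is bounded trivially can instead be counted using its roughness. In Case 1, for instance:
\begin{itemize}
\item $\#\{h\le N/a:P^-(h)>z\}\ll N/(a\log z)$;
\item $\#\{b<a:P^-(b)>z\}\ll a/\log z$;
\item $\sum_{a\le\sqrt N,\,P^-(a)>z}1/a\ll\log N/\log z$.
\end{itemize}
The terms $S_2$, $S_3$ and the Rankin errors are already small enough. But this has to be carried out in all three cases. Without it, the argument only covers $\log\xi(N)=o((\log N)^{2/5})$ for fixed $L$.
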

	\begin{proof}
		We apply \Cref{thm:sound-xu 3.1} with $a_n = \bm{1}_{P^-(n) > z}(n)Q_L(\theta n)$. By \Cref{lem: variance estimate 1} and \Cref{lem: variance estimate 2}, the variance is $V = V_{Q_L}(N; z) \asymp N$. As noted at the beginning of this section, condition (1) of \Cref{thm:sound-xu 3.1} holds trivially. 
		
		For condition (2), \Cref{lem:verif cond 2 thm A} implies the required sum is bounded by $N^2( |\Delta(Q_L)|\bm{1}_{z=1} + O(L(\log N)^{-4/5}))$. To satisfy condition (2), which requires this sum to be $\le \epsilon^2 V^2$, we define our parameter $\epsilon$ as
		$$ \epsilon := C \bigg( |\Delta(Q_L)|\bm{1}_{z=1} + \frac{L}{(\log N)^{4/5}} \bigg)^{1/2} $$
		for a sufficiently large absolute constant $C>0$. 
		
		For condition (3), \Cref{lem: verif cond 3} bounds the corresponding sum by $O_A(N^2 (\log L)^4 (\log N)^{-A})$. By choosing $A$ sufficiently large (for instance, $A=2$), this bound is $\ll \epsilon^4 V^2$, meaning condition (3) is also satisfied. \Cref{thm:sound-xu 3.1} thus applies with $t^2 = (u_1^2+u_2^2)/2 = u/2$, yielding the claimed bound.
	\end{proof}
	We are now ready to prove the `if' direction of \Cref{thm:A}.
	\begin{proof}[Proof of `if' in \Cref{thm:A}]
		We are supposing $\Delta(g) = 0$, and wish to show that $S_{g}(N)$ converges in distribution to $\CC\CN(0,1)$ as $N \to \infty$. Write $\bm{u} = (u_1,u_2),$ and $u = u_1^2+u_2^2$. For a given random variable $X$, let $\phi(X,\bm{u})$ denote the characteristic function of $X$. That is: $$\phi(X,\bm{u}) := \E[e^{i u_1 \Re(X) + iu_2\Im(X)}].$$
		In our notation, a standard complex normal random variable has characteristic function $e^{-u/4}$. Thus, by L\'evy's continuity theorem, it suffices for us to prove that
		\begin{align}\lim_{N \to \infty} |\phi(S_g(N),\bm{u})- e^{-\frac{u}{4}}| = 0 \quad \forall \bm{u}\in \R^2 \label{char fn 1}.
		\end{align}
		Fix $L \geq 1$. By the triangle inequality, we have
		\begin{equation}\label{char fn 2}
			\big|\phi(S_g(N),\bm{u})-e^{-\frac{u}{4}}\big| \leq\big|\phi(S_g(N),\bm{u})- \phi(S_{Q_L}(N),\bm{u})\big| + \big|\phi(S_{Q_L}(N),\bm{u})- e^{-\frac{u}{4}}\big|.
		\end{equation}
		To bound the first term on the right hand side of \eqref{char fn 2}, we use the standard probabilistic inequality$$|\phi(X,\bm{u})-\phi(Y,\bm{u})| \leq \sqrt{u} \cdot \sqrt{\E |X-Y|^2}.$$
		This gives:
		\begin{equation*}
			\big|\phi(S_g(N),\bm{u})- \phi(S_{Q_L}(N),\bm{u})\big| \le \sqrt{u} \cdot \sqrt{\E |S_g(N)-S_{Q_L}(N)|^2}
		\end{equation*}
		We have
		\begin{align*}
			|S_g(N)-S_{Q_L}(N)| &= \frac{1}{\sqrt{V_{g}(N)}} \sum_{n \leq N} f(n) g(\theta n) -\frac{1}{\sqrt{V_{Q_L}(N)}} \sum_{n \leq N} f(n) Q_L(\theta n)\\
			&=  \frac{1}{\sqrt{V_{g}(N)}} \sum_{n \leq N} f(n) \Big(g(\theta n)-Q_L(\theta n)\Big) +\bigg(\frac{\sqrt{V_{Q_L}(N)}}{\sqrt{V_{g}(N)}} - 1\bigg)  S_{Q_L}(N)
		\end{align*}
		Using the inequality $|a+b|^2 \leq 2(|a|^2+|b|^2)$ and the orthogonality relations for Steinhaus RMFs, we thus have 
		\begin{align}\label{char fn 3}
			\big|\phi(S_g(N),\bm{u})- \phi(S_{Q_L}(N),\bm{u})\big| \le \sqrt{u\big(A_L(N) + B_L(N)\big)},
		\end{align}
		where \begin{align*}
			A_L(N) &:= \frac{2}{V_{g}(N)} \sum_{n \leq N}  \Big|g(\theta n)-Q_L(\theta n)\Big|^2,\\
			B_L(N) &:=  2\bigg(\frac{\sqrt{V_{Q_L}(N)}}{\sqrt{V_{g}(N)}} - 1\bigg)^2 \E \big|S_{Q_L}(N)\big|^2 = 2\bigg(\frac{\sqrt{V_{Q_L}(N)}}{\sqrt{V_{g}(N)}} - 1\bigg)^2.
		\end{align*}
		We now wish to bound the second term on the right hand side of \eqref{char fn 2}. By applying \Cref{cor: sound-xu application} with $z=1$, we immediately obtain:
		\begin{equation}\label{char fn 4}
			\big|\phi(S_{Q_L}(N),\bm{u})- e^{-\frac{u}{4}}\big| \ll e^{\frac{u}{2}} \cdot \sqrt{|\Delta(Q_L)| + \frac{L}{(\log N)^{4/5}}}.
		\end{equation}
		Substituting \eqref{char fn 3} and \eqref{char fn 4} into \eqref{char fn 2} gives:
		\begin{equation}\label{char fn 5}
			\big|\phi(S_g(N),\bm{u})-e^{-\frac{u}{4}}\big| \ll u^{1/2}\cdot\sqrt{A_L(N) + B_L(N)}+ e^{u/2} \cdot \sqrt{|\Delta(Q_L)| + L(\log N)^{-\frac{4}{5}}}.
		\end{equation}
		We have, by Weyl's equidistribution theorem, that
		$$\lim_{N \to \infty} A_L(N) = \lim_{N \to \infty} \sum_{n \leq N} \frac{2}{V_{g}(N)}   \Big|g(\theta n)-Q_L(\theta n)\Big|^2 = \frac{2}{\|g\|_2^2} \|g-Q_L\|_{2}^2,$$
		and $$\lim_{N \to \infty} B_L(N) = 2 \lim_{N \to \infty} \bigg(\frac{\sqrt{V_{Q_L}(N)}}{\sqrt{V_{g}(N)}} - 1\bigg)^2 =  2\bigg(\frac{\|Q_L\|_2}{\|g\|_2} - 1\bigg)^2.$$
		Thus, taking $N \to \infty$ in \eqref{char fn 5} gives
		\begin{equation}\label{char fn 6}
			\lim_{N \to \infty}\big|\phi(S_g(N),\bm{u})-e^{-\frac{u}{4}}\big| \ll u^{1/2} \cdot \sqrt{\frac{\|g-Q_L\|_{2}^2}{\|g\|_2^2}  +\bigg(\frac{\|Q_L\|_2}{\|g\|_2} - 1\bigg)^2}+ e^{\frac{u}{2}} \cdot \sqrt{|\Delta(Q_L)|}.
		\end{equation}
		Since $Q_L$ is defined as the truncated Fourier polynomial of $g$, classical Fourier theory implies that $\|g - Q_L\|_2 \to 0, \|Q_L\|_2 \to \|g\|$ as $L \to \infty$. Furthermore, it is plain to see that $\Delta(Q_L) \to \Delta(g) = 0$ as $L \to \infty$. Hence, taking $L \to \infty$ in the above inequality gives
		\begin{align*}
			\lim_{N \to \infty}\big|\phi(S_g(N),\bm{u})-e^{-\frac{u}{4}}\big| &\ll \lim_{L \to \infty} u^{1/2}\cdot \bigg[\sqrt{\frac{ \|g-Q_L\|_{2}^2 }{\|g\|_2^2}+ \bigg(\frac{\|Q_L\|_2}{\|g\|_2} - 1\bigg)^2  } + e^{\frac{u}{2}} \sqrt{|\Delta(Q_L)|}\bigg]\\ &= 0.
		\end{align*}
		This shows that \eqref{char fn 1} is true, and completes the proof.
	\end{proof}
	The proof of \Cref{thm:B}, given below, is largely similar to the above proof. As such, certain technical calculations are omitted. 
	\begin{proof}[Proof of \Cref{thm:B}]
		If $\xi(N) > \sqrt{N}$ and $N - \xi(N) \to +\infty$, then the convergence to a Gaussian limit follows trivially from the standard Central Limit Theorem. Thus, assume for the rest of the proof that $\xi(N) \le \sqrt{N}$. 
		
		We wish to show that $S_{g}(N;\xi(N))$ converges in distribution to $\CC\CN(0,1)$ as $N \to \infty$. To this end, we once again argue via characteristic functions and L\'evy's continuity theorem. By following the same argument as in the proof of `if' for \Cref{thm:A}, and applying \Cref{cor: sound-xu application} with $z=\xi(N)$ (which annihilates the $\Delta(Q_L)$ term), one can show that
		\begin{equation*}
			\big|\phi(S_{g}(N;\xi(N)),\bm{u})-e^{-\frac{u}{4}}\big| \ll u^{1/2} \cdot \sqrt{C_L(N) + D_L(N)}+ e^{u/2}\cdot \sqrt{\frac{L}{(\log N)^{4/5}}},
		\end{equation*}
		where
		\begin{align*}
			C_L(N) &:= \frac{2}{V_{g}(N;\xi(N))} \sum_{\substack{n \leq N \\ P^-(n) > \xi(N)}}  \Big|g(\theta n)-Q_L(\theta n)\Big|^2,\\
			D_L(N) &:=  2\bigg(\frac{\sqrt{V_{Q_L}(N;\xi(N))}}{\sqrt{V_{g}(N;\xi(N))}} - 1\bigg)^2 \E \big|S_{Q_L}(N;\xi(N))\big|^2 = 2\bigg(\frac{\sqrt{V_{Q_L}(N;\xi(N))}}{\sqrt{V_{g}(N;\xi(N))}} - 1\bigg)^2.
		\end{align*}
		By \Cref{lem: variance estimate 2}, we have
		$$\lim_{N \to \infty} C_L(N) = \frac{2}{\|g\|_2^2} \|g-Q_L\|_{2}^2,$$
		and $$\lim_{N \to \infty} D_L(N) =  2\bigg(\frac{\|Q_L\|_2}{\|g\|_2} - 1\bigg)^2.$$
		Taking $N \to \infty$ and then $L \to \infty$ thus implies
		\begin{align*}
			\lim_{N\to \infty}\big|\phi(S_{g}(N;\xi(N)),\bm{u})-e^{-\frac{u}{4}}\big| &\ll \lim_{L \to \infty} u^{1/2}\sqrt{\frac{\|g-Q_L\|_{2}^2}{\|g\|_2^2}  +\bigg(\frac{\|Q_L\|_2}{\|g\|_2} - 1\bigg)^2} \\ &= 0.
		\end{align*}
		This proves that $S_{g}(N;\xi(N))$ converges in distribution to a standard complex normal random variable.
		
	\end{proof}

	\section{Proof of `only if' in \Cref{thm:A}}\label{sec: proof only if}
	The following Proposition is arguably the most important result in the present paper, as it establishes a new technique for bounding moments of sums of RMFs, so long as they are weighted by sequences that exhibit cancellation on the primes. In our case, the sequence in question is exponentials $e(\theta n)$. It would be interesting to try to extend this proposition to more general types of weights.
	\begin{prop}\label{thm: bounded moments}
		Let $Q_L$ be defined as in \eqref{eq: def trig poly}. Let $\theta \in \R\setminus\Q$ satisfy the Diophantine condition \eqref{eq:sound-xu badness approx}. Then, for each $k \in \N$, we have uniformly for $L \le (\log N)^{1/10}$ that
		$$\E\Big[\big|S_{Q_L}(N)\big|^{2k}\Big] \ll_k 1 +\frac{\sqrt{L}(\log L)^{2k}}{(\log N)^{\frac{115 - 14k^2 + 4k}{12} - o(1)}}.$$
		In particular, we have uniformly for $L \le (\log N)^{1/10}$ that 
		$$\E\big[\big|S_{Q_L}(N)\big|^{6}\big] \ll 1  + \frac{\sqrt{L}(\log L)^6 }{(\log N)^{1/13}}.$$
	\end{prop}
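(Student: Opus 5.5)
The plan is to use the martingale decomposition of \Cref{sec:MDS} with $a_n = Q_L(\theta n)$, writing $\sum_{n\le N} f(n)Q_L(\theta n) = \sum_{p \le N}\CM_p(N)$, and to apply the upper Burkholder--Davis--Gundy inequality (\Cref{thm:BDG}). This gives
\[
\E\Big|\sum_{n\le N} f(n)Q_L(\theta n)\Big|^{2k} \ll_k \E\Big[\Big(\sum_{p\le N}|\CM_p(N)|^2\Big)^k\Big].
\]
Since $V_{Q_L}(N)\asymp N$ by \Cref{lem: variance estimate 1}, it then suffices to show that the right-hand side is $\ll_k N^k$, up to the stated error term.

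\textbf{Step 1: split the square function.} For each prime $p$, write $\CM_p(N) = \sum_{j\ge1} f(p)^j \sum_{m\le N/p^j,\,P^+(m)<p} f(m)Q_L(\theta p^j m)$. The terms with $j\ge 2$ are handled crudely; they should contribute $O(N^k)$ because $\sum_p p^{-2}$ converges. For the main term $j=1$, $|\CM_p|^2 = |T_p|^2$ with $T_p := \sum_{m\le N/p,\,P^+(m)<p} f(m)Q_L(\theta pm)$. Expanding $|T_p|^2$ produces a diagonal part $D := \sum_p\sum_{m\le N/p}|Q_L(\theta pm)|^2 \ll N\|Q_L\|_\infty^{2}$. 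This bound is too lossy, because $\|Q_L\|_\infty$ can be as large as $\log L$. Instead I will evaluate $D$ by expanding $|Q_L|^2$ into Fourier modes. The zero mode gives $\|Q_L\|_2^2 \sum_p \lfloor N/p\rfloor$-type sums restricted to $P^+(m)<p$, and this totals $\asymp N$. The nonzero modes give sums $\sum_{m}\sum_{p} e(\theta(\l_1-\l_2)pm)$, which I will bound using Chen's prime exponential sum estimate (\Cref{thm: exp sum primes}) together with the Diophantine condition \eqref{eq:sound-xu badness approx}, in the manner of \Cref{lem: variance estimate 2}. The result is that $D$ is deterministic and equal to $O(N)$ plus a saving of a power of $\log N$ times $L(\log L)^2$. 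This is the source of the factor $\sqrt L(\log L)^{2k}$.

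\textbf{Step 2: the off-diagonal part.} What remains is the random part $R := \sum_p \sum_{m_1\ne m_2} f(m_1)\overline{f(m_2)}\,Q_L(\theta pm_1)\overline{Q_L(\theta pm_2)}$. Because $(D+R)^k \ll_k D^k + |R|^k$, I need to bound $\E|R|^k$; using H\"older it is enough to handle $\E|R|^{2\lceil k/2\rceil}$. The key point is that, after expanding $Q_L$, the sum over $p$ inside $R$ has the form $\sum_{p} e(\theta p(\l_1 m_1-\l_2 m_2))$ with $\l_1m_1\neq \l_2m_2$ generically. These are prime exponential sums with a frequency that is a nonzero integer multiple of $\theta$, and Chen's bound, combined with Diophantine spacing (\Cref{lem: badness spacing} and \Cref{cor: log weights}), saves a fixed power of $\log N$ except on a sparse exceptional set of frequencies. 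When I take the $2k$-th moment of $R$ via Steinhaus orthogonality, I get a count of solutions of $m_1\cdots m_k \overline{m_{k+1}}\cdots = \ldots$, weighted by products of these prime sums. Divisor-function bounds $\tau_k$ (with losses $(\log N)^{k^2}$-ish) then compete against the power saving. This competition produces the exponent $\frac{115-14k^2+4k}{12}$, which stays positive for $k=3$ and gives the $1/13$ in the sixth-moment case.

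\textbf{Main obstacle.} The hardest step is Step 2. I have to organize the expanded moment of $R$ so that every term contains at least one genuinely oscillating prime sum, while still controlling the divisor-type multiplicities. Degenerate cases where $\l_1m_1=\l_2m_2$, or where the frequency falls into the exceptional set $\CB$, have to be shown to contribute only $O(N^k)$ or less. For these I would first try the dyadic decomposition in $m$ combined with Cauchy--Schwarz, as in the proof of \Cref{lem:reduction cond 2}, and use \Cref{cor: log weights} to absorb the exceptional frequencies.
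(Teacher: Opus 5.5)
\textbf{There is a genuine gap, in Step 2.} Your overall architecture is the paper's: BDG, Steinhaus orthogonality, and one oscillating prime sum handled by Chen's bound together with \Cref{lem: badness spacing} and \Cref{cor: log weights}, played against $\tau_k$ losses. But the reduction in Step 2 breaks exactly the case you need. Because $R$ is real and of no fixed sign, $\E|R|^k$ is not a polynomial moment for odd $k$, so you pass by H\"older to $\E|R|^{2\lceil k/2\rceil}$.

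For $k=3$ this is fatal. That is the sixth moment, which is what the `only if' direction of \Cref{thm:A} uses. You would have to bound $\E[R^4]$, an expansion with four primes and the constraint $m_1\cdots m_4=n_1\cdots n_4$, i.e.\ the computation at index $4$ instead of $3$. With the tools you list, that competition is lost. The non-exceptional terms need the Chen saving $(\log N)^{\gamma/2-5/4}$ to beat a divisor loss of $(\log N)^{16}$, which forces $\gamma>69/2$. The exceptional set, with spacing $\delta\gg(\log N)^{127/2-2\gamma}/L$, is only under control for $\gamma<45/4$. Equivalently, $\frac{115-14k^2+4k}{12}$ equals $\frac{1}{12}$ at $k=3$ but is negative at $k=4$. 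So under \eqref{eq:sound-xu badness approx} your plan does not give the sixth-moment bound. Your claim that it produces the stated exponent is also inconsistent with your own reduction, which works at index $2\lceil k/2\rceil$ rather than $k$.

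\textbf{The fix is never to take absolute values.} $\sum_p|\CM_p(N)|^2$ is a polynomial in the $f(p)$'s, so $\E[(\sum_p|\CM_p(N)|^2)^k]$ can be expanded exactly. In your notation, $D$ is deterministic and $\E[(D+R)^k]=\sum_{j}\binom{k}{j}D^{k-j}\E[R^j]$, where each $\E[R^j]$ with $j\le k$ is computable by orthogonality.

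The paper does not even split off $D$. It writes $\CM_p(N)=f(p)\sum_{m\le N/p,\,P^+(m)\le p}a_{mp}f(m)$. This also absorbs your $j\ge2$ prime-power terms, whose treatment needs more than $\sum_p p^{-2}<\infty$ once $k\ge 2$. It then expands and splits according to whether all phases $m_jr_j-n_js_j$ vanish.
\begin{itemize}
\item The all-vanishing case contains both your diagonal and your degenerate terms $\l_1m_1=\l_2m_2$. It is $O_k(N^k)$ via $\sum_p\Psi(X/p,p)\le X$ and the height weights.
\item In the other case, the remaining $k-1$ prime sums are bounded trivially, and the range $\max(m_1,n_1)>N\exp(-\sqrt{\log N})$ is removed by Rankin's trick. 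The rest is your Chen/spacing argument.
\end{itemize}

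\textbf{Your Step 1 is unnecessary.} Each $n\le N$ has a unique $p=P^+(n)$, so the diagonal is at most $V_{Q_L}(N)$, with no exponential sums at all. The factor $\sqrt L(\log L)^{2k}$ does not come from $D$. It comes from $\delta^{-1/2}$ in \Cref{cor: log weights} and from $\sum|C(\bm{r},\bm{s})|$.
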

	\begin{proof}
		Recall the martingale difference sequence setup from \Cref{sec:MDS}. For our truncated sum, we define the weights $a_n = Q_L(\theta n)$ and consider the martingale differences
		$$\CM_p(N) = \sum_{\substack{n \leq N \\ P^+(n) = p}} a_n f(n) = f(p) \sum_{\substack{m \leq N/p \\ P^+(m) \leq p}} a_{mp} f(m).$$
		Since $|f(p)|^2 = 1$, the absolute square of each martingale difference isolates the prime $p$ from the remaining smooth factors:
		$$|\CM_p(N)|^2 = \sum_{\substack{m,n \leq N/p \\ P^+(mn) \leq p}} a_{mp} \overline{a_{np}} f(m) \overline{f(n)}.$$
		By applying the Burkholder--Davis--Gundy inequality (\Cref{thm:BDG}) to the martingale $$S_{Q_L}(N)\cdot\sqrt{V_{Q_L}(N)} = \sum_{p \leq N} \CM_p(N),$$ we bound the $2k$-th moment of our sum by the $k$-th moment of its quadratic variation:
		\begin{equation*}
			\E\Big[\big|S_{Q_L}(N)\big|^{2k}\Big] \ll_k \frac{1}{V_{Q_L}(N)^k} \E\bigg[ \bigg( \sum_{p \leq N} |\CM_p(N)|^2 \bigg)^k \bigg].
		\end{equation*}
		Expanding the $k$-th power of the sum over primes $p_1, \dots, p_k$ gives a product of $k$ independent terms $|\CM_{p_j}(N)|^2$. Taking the expectation over the independent Steinhaus variables $f$ yields the condition that $\E[f(m_1\cdots m_k)\overline{f(n_1\cdots n_k)}] = 1$ if $m_1\cdots m_k = n_1\cdots n_k$, and $0$ otherwise. By substituting $a_n = Q_L(\theta n) = \sum_{|r| \le L} c_r e(\theta n r)$, this expectation isolates the diagonal terms where the products exactly match, yielding:
		\begin{equation}\label{eq: BDG 1}
			\E\Big[\big|S_{Q_L}(N)\big|^{2k}\Big] \ll_k  \frac{1}{V_{Q_L}(N)^k}\sum_{p_1,\dots,p_k \leq N}\sum_{\substack{r_1,\dots ,r_k \\ s_1,\dots ,s_k \\ |r_i|,|s_i| \leq L}}C(\bm{r,s})\sum_{\substack{m_1,\dots, m_k\\ n_1,\dots ,n_k\\ m_i,n_i \leq N/p_i\\ P^+(m_in_i) \leq p_i\\ m_1\cdots m_k=n_1 \cdots n_k}} \prod_{j=1}^k e\big[\theta p_j(m_jr_j-n_js_j)\big],
		\end{equation}
		where $C(\bm{r,s}) := \prod_{i = 1}^k c_{r_i}\overline{c_{s_i}}.$
		Here we distinguish two cases, depending on whether or not all the phases $m_jr_j-n_js_j$ are zero. 
		
		\textbf{Case 1}: $m_jr_j-n_js_j = 0 $ \textit{for all} $j$. In this case, we have 
		$$m_1 \cdots m_k r_1 \cdots r_k = n_1 \cdots n_k s_1 \cdots s_k.$$
		Since $\prod_{i = 1}^k m _i = \prod_{i = 1}^k n_i$ in \eqref{eq: BDG 1}, then we also have $\prod_{i = 1}^k r_i = \prod_{i = 1}^k s_i$. The contribution of such terms to \eqref{eq: BDG 1} is
		\begin{equation}\label{eq: BDG 2}
			\frac{1}{V_{Q_L}(N)^k}\sum_{p_1,\dots ,p_k \leq N}\sum_{\substack{r_1,\dots ,r_k \\ s_1,\dots ,s_k \\ |r_i|,|s_i| \leq L \\ r_1 \cdots r_k = s_1 \cdots s_k}}C(\bm{r,s})\sum_{\substack{m_1,\dots, m_k\\ n_1,\dots ,n_k\\ m_i,n_i \leq N/p_i\\ P^+(m_in_i) \leq p_i\\ m_ir_i=n_is_i}} 1.
		\end{equation}
		Note that 
		$$\sum_{\substack{m,n \leq N/p\\ P^+(mn) \leq p\\ mr=ns}} 1 = \Psi\bigg(\frac{N}{p H(r/s)},p\bigg),$$
		where $\Psi(x,y)$ counts the number of $y-$smooth numbers not exceeding $x$, and $H$ denotes the na\"ive (Weil) height. Thus, \eqref{eq: BDG 2} becomes
		\begin{equation}\label{eq: BDG 3}
			\frac{1}{V_{Q_L}(N)^k}\sum_{\substack{r_1,\dots ,r_k \\ s_1,\dots ,s_k \\ |r_i|,|s_i| \leq L}}C(\bm{r,s}) \prod_{j = 1}^k  \bigg(\sum_{p_j \leq N}\Psi\Big(\frac{N}{p_j H(r_j/s_j)},p_j\Big)\bigg).
		\end{equation}
		
		Since $\sum_{p \leq N}\Psi(X/p,p) = \Psi(X,N) -1 \le X,$ then \eqref{eq: BDG 3} can be bounded as
		
		\begin{equation}\label{eq: BDG 4}
			\ll_k \frac{ N^k}{V_{Q_L}(N)^k}\sum_{\substack{r_1,\dots ,r_k \\ s_1,\dots ,s_k \\ |r_i|,|s_i| \leq L \\ r_1 \cdots r_k = s_1 \cdots s_k}}\frac{|C(\bm{r,s})|}{\prod_{j = 1}^k H(r_j/s_j)} \ll_k \frac{N^k}{V_{Q_L}(N)^k}\sum_{\substack{r_1,\dots ,r_k \\ |r_i|\leq L}}\frac{\tau_k(|r_1\cdots r_k|)}{|r_1\cdots r_k|^2},
		\end{equation}
		since $|C(\bm{r,s})| \ll |r_1 \dots  r_k s_1 \dots  s_k|^{-1},$ and $r_1 \cdots r_k = s_1\cdots s_k.$ By \Cref{lem: variance estimate 1} and the sub-multiplicativity of $\tau_k$, we see that the above can be bounded as
		\begin{equation}\label{eq: BDG 5}
			\frac{N^k}{V_{Q_L}(N)^k}\sum_{\substack{r_1,\dots ,r_k \\ |r_i|\leq L}}\frac{\tau_k(|r_1\cdots r_k|)}{|r_1\cdots r_k|^2} \ll_k \bigg(\sum_{\substack{r \ge 1}}\frac{\tau_k(r)^2}{r^2}\bigg)^k \ll_k 1.
		\end{equation}
		This shows that the terms in \eqref{eq: BDG 1} for which $m_jr_j-n_js_j = 0 $ for all $j$ contribute an amount which is at most a constant depending on $k$.
		
		\textbf{Case 2}: There exists $j$ such that $m_jr_j-n_js_j \ne 0 $. In this case, suppose without loss of generality that $m_1r_1-n_1s_1 \ne 0.$ By interchanging orders of summation, we rewrite the contribution to the sum on the right hand side of \eqref{eq: BDG 1} as
		\begin{equation}\label{eq: BDG 6}
			\sum_{\substack{r_1,\dots ,r_k \\ s_1,\dots ,s_k \\ |r_i|,|s_i| \leq L}}C(\bm{r,s})\sum_{\substack{m_1,\dots, m_k\leq N\\ n_1,\dots ,n_k \leq N\\ m_1\cdots m_k=n_1 \cdots n_k \\ m_1r_1 \ne n_1s_1}} \prod_{j=1}^k	\bigg|\sum_{\substack{ P^+(m_jn_j) \leq p_j \leq \frac{N}{\max(n_j,m_j)}}} e\big[\theta p_j(m_jr_j-n_js_j)\big]\bigg|.
		\end{equation}
		
		By taking absolute values, and bounding all exponential prime sums except for the first one by $N/\max(n_i,m_i)$, we see that \eqref{eq: BDG 6} is 
		\begin{equation}\label{eq: BDG 7}
			\ll N^{k-1}\sum_{\substack{r_1,\dots ,r_k \\ s_1,\dots ,s_k \\ |r_i|,|s_i| \leq L}}|C(\bm{r,s})|\sum_{\substack{m_1,\dots, m_k\leq N\\ n_1,\dots ,n_k \leq N\\ m_1\cdots m_k=n_1 \cdots n_k}} \prod_{j = 2}^{k} \bigg(\frac{1}{\max(n_j,m_j)}\bigg) \bigg| \sum_{\substack{ P^+(m_1n_1) \leq p_1 \leq \frac{N}{\max(n_1,m_1)}}} e\big[\theta p_1(m_1r_1-n_1s_1)\big]\bigg|.
		\end{equation}
		Suppose that $\max(n_1,m_1) > N \exp(-\sqrt{\log N}).$ Let $W = N \exp(-\sqrt{\log N})$. By symmetry, we may assume $m_1 > W$. The constraint $P^+(m_1n_1) \le p_1 \le N/\max(n_1,m_1)$ forces $P^+(m_1) \le N/W = \exp(\sqrt{\log N})$. Bounding $1/\max(n_j, m_j) \le 1/m_j$, the contribution of this tail to \eqref{eq: BDG 7} is bounded by:
		\begin{align*}
			&\ll N^{k}\sum_{\substack{r_1,\dots ,r_k \\ s_1,\dots ,s_k \\ |r_i|,|s_i| \leq L}}|C(\bm{r,s})|\sum_{\substack{m_1,\dots, m_k\leq N\\ n_1,\dots ,n_k \leq N\\ m_1\cdots m_k=n_1 \cdots n_k}} \prod_{j = 1}^{k} \bigg(\frac{1}{m_j}\bigg) \bm{1}_{\substack{m_1 > W \\ P^+(m_1) \le \exp(\sqrt{\log N})}}\\
			&\ll N^{k}\sum_{\substack{r_1,\dots ,r_k \\ s_1,\dots ,s_k \\ |r_i|,|s_i| \leq L}}|C(\bm{r,s})|\sum_{\substack{ m_1,\dots ,m_k \leq N\\ m_1 > W \\ P^+(m_1) \le \exp(\sqrt{\log N})}} \frac{\tau_k(m_1\cdots m_k)}{m_1\cdots m_k}\\
			&\ll_k N^{k}\sum_{\substack{r_1,\dots ,r_k \\ s_1,\dots ,s_k \\ |r_i|,|s_i| \leq L}}|C(\bm{r,s})| \bigg(\sum_{m \leq N} \frac{\tau_k(m)}{m}\bigg)^{k-1}\bigg(\sum_{\substack{ m_1 > W \\ P^+(m_1) \leq \exp(\sqrt{\log N})}} \frac{\tau_k(m_1)}{m_1}\bigg).
		\end{align*}
		
		To bound the final sum over $m_1$, we apply Rankin's trick with $\delta = 1/\sqrt{\log N}$. By multiplying the summands by $(m_1/W)^\delta \ge 1$, we obtain:
		\begin{align*}
			\sum_{\substack{ m_1 > W \\ P^+(m_1) \leq \exp(\sqrt{\log N})}} \frac{\tau_k(m_1)}{m_1} &\le W^{-\delta} \sum_{P^+(m) \le \exp(\sqrt{\log N})} \frac{\tau_k(m)}{m^{1-\delta}} \\
			&\le \exp(-\sqrt{\log N}) \prod_{p \le \exp(\sqrt{\log N})} \bigg(1 - \frac{1}{p^{1-\delta}}\bigg)^{-k} \\
			&\ll_k \exp(-\sqrt{\log N}) (\log N)^{k/2}.
		\end{align*}
		
		On the other hand, the remaining $k-1$ sums of $\tau_k(m)/m$ contribute $(\log N)^{k(k-1)}$. Hence, the total bound for the tail is $O_k(N^k \exp(-\frac{1}{2}\sqrt{\log N}))$, which decays sub-exponentially and is thus negligible.
		It remains to bound \eqref{eq: BDG 6} in the case where $\max\{n_1,m_1\} \leq W$. By symmetry, we may assume without loss of generality that $m_1 \ge n_1$. So we must bound
		\begin{equation}\label{eq: BDG 7.5}
			N^{k-1}\sum_{\substack{r_1,\dots ,r_k \\ s_1,\dots ,s_k \\ |r_i|,|s_i| \leq L}}|C(\bm{r,s})|\sum_{\substack{m_1,\dots, m_k\leq N\\ n_1,\dots ,n_k \leq N\\ m_1\cdots m_k=n_1 \cdots n_k \\ n_1 \le m_1 \leq W}} \prod_{j = 2}^{k} \bigg(\frac{1}{\max(n_j,m_j)}\bigg) \bigg| \sum_{\substack{ P^+(m_1n_1) \leq p_1 \leq \frac{N}{m_1}}} e\big[\theta p_1(m_1r_1-n_1s_1)\big]\bigg|.
		\end{equation}
		
		Let $\gamma$ be a parameter to be optimized later. Let the set $\CM$ be defined as
		$$\CM =\CM(r_1,s_1,n_1) = \bigg\{m_1 \leq W : \exists q \in \Z_{\ne 0}, |q| \leq (\log N)^{\gamma} \text{ s.t. } \|\theta q (m_1r_1-n_1s_1)\| \leq \frac{W(\log N)^{\gamma}}{N} \bigg\}.$$
		
		In the case when $m_1 \in \CM$, we bound the prime sum in \eqref{eq: BDG 7.5} trivially by $N/m_1$.
		
		On the other hand, in the case when $m_1 \not\in \CM$, we use Dirichlet's approximation theorem to find a rational number $a/q$ with $(a,q) = 1$ and $1 \leq q \leq N/(m_1(\log N)^{\gamma})$ such that $|\theta (m_1r_1-n_1s_1) - a/q| \leq m_1(\log N)^{\gamma}/(q N) \le W(\log N)^\gamma/(q N)$. Since $m_1 \not \in \CM$, it must be the case that $q > (\log N)^{\gamma}$. Given this rational approximation with large enough $q$, we apply \Cref{thm: exp sum primes}. Because $N/m_1 \ge \exp(\sqrt{\log N})$, the term $\exp(-\frac{1}{2}\sqrt{\log (N/m_1)})$ coming out of \Cref{thm: exp sum primes} is negligible, and we can omit it. We find:
		\begin{align*}
			\bigg| \bm{1}_{m_1 \not \in \CM} \sum_{\substack{ P^+(m_1n_1) \leq p_1 \leq \frac{N}{m_1}}} e\big[\theta p_1(m_1r_1-n_1s_1)\big]\bigg| &\ll \frac{N}{m_1}(\log N)^{3/4+o(1)} \bigg( \frac{1}{q^{1/2}} +\frac{(m_1 q \log q )^{1/2}}{N^{1/2}} \bigg)\\
			&\ll \frac{N}{m_1}(\log N)^{3/4+o(1)} \bigg( (\log N)^{-\gamma/2} +\frac{(\log N )^{1/2}}{(\log N)^{\gamma/2}} \bigg)\\
			&\ll \frac{N}{m_1 (\log N)^{\gamma/2 - 5/4 -o(1)}}.
		\end{align*}
		
		We have bounded the prime sum in \eqref{eq: BDG 7.5} in the two cases $m_1 \in \CM$ and $m_1 \not \in \CM$. By combining these cases, we see that \eqref{eq: BDG 7.5} is bounded by:
		\begin{equation}\label{eq: BDG 8}
			\ll  N^{k}\sum_{\substack{r, s \\ |r_i|,|s_i| \leq L}}|C(\bm{r,s})|\bigg(\sum_{\substack{m_i, n_i \leq N \\  m_1\cdots m_k= n_1 \cdots n_k}} \frac{\bm{1}_{m_1 \in \CM} + \frac{\bm{1}_{m_1 \not\in \CM}}{(\log N)^{\gamma/2-5/4-o(1)}} }{\max(n_1,m_1) \cdots \max(n_k,m_k)} \bigg).
		\end{equation}
		We have, by sub-multiplicativity of $\tau_k$ and the fact that $\sum_{n \leq t}\tau_k(n)/n \ll (\log t)^k$, that
		\begin{align}\label{eq: BDG 12}
			\sum_{\substack{m, n \leq N\\ m_1\cdots m_k= n_1 \cdots n_k}}  \frac{\bm{1}_{m_1 \not\in \CM}}{\max(n_1,m_1) \cdots \max(n_k,m_k)} &\ll  \sum_{\substack{m_1,\dots, m_k\leq N}}  \frac{\tau_k(m_1 \cdots m_k)}{m_1 \cdots m_k} \ll (\log N)^{k^2}.
		\end{align}
		On the other hand, the sum over $m_1 \in \CM$ can be bounded as:
		\begin{align}
			\sum_{\substack{m, n \leq N\\ \prod m_i=\prod n_i}}  \frac{\bm{1}_{m_1 \in \CM}}{\max(n_1,m_1) \cdots \max(n_k,m_k)} &\ll \bigg(\sum_{\substack{m_1 \leq N\\ m_1 \in \CM}} \frac{\tau_k(m_1)}{m_1}\bigg)\bigg( \sum_{\substack{m \leq N}}  \frac{\tau_k(m)}{m}\bigg)^{k-1}. \label{eq: BDG 10}\\
			& \nonumber \ll   \bigg(\sum_{\substack{m_1 \leq N\\ m_1 \in \CM}} \frac{\tau_k(m_1)}{m_1}\bigg)(\log N)^{k^2-k}. 
		\end{align}
		Applying Lemma \ref{lem: badness spacing} with $X=W, Q=(\log N)^\gamma, s=r_1, y=n_1s_1$, and constant $\eta = W(\log N)^\gamma/N$, we see that any distinct $m_1, m_1' \in \CM$ are separated by at least $\delta$, where
		$$ \delta = \frac{1}{|r_1| (\log N)^{2\gamma}} \bigg( \log \frac{C N}{2 W (\log N)^\gamma} \bigg)^{127} \gg \frac{(\log N)^{127/2 - 2\gamma}}{L}, $$
		having used the fact that $W = N \exp(-\sqrt{\log N})$. 
		
		Furthermore, since we assumed $m_1r_1 - n_1s_1 \ne 0$, the Diophantine condition \eqref{eq:sound-xu badness approx} dictates that elements in $\CM$ cannot be arbitrarily small. Specifically, for any $m_1 \in \CM$, there exists $q \ne 0$ with $|q| \le (\log N)^\gamma$ such that:
		$$ C \exp\big(-|q(m_1r_1 - n_1s_1)|^{1/127}\big) \le \|\theta q (m_1r_1-n_1s_1)\| \le \frac{W(\log N)^\gamma}{N} = (\log N)^\gamma \exp(-\sqrt{\log N}). $$
		Taking logarithms and rearranging yields $|q(m_1r_1 - n_1s_1)| \ge (\log N)^{127/2}/2^{127}$ for large $N$. Since $|q| \le (\log N)^\gamma$ and $|m_1r_1 - n_1s_1| \le 2L \max(n_1, m_1) = 2L m_1$, we divide to find:
		$$ m_1 \ge Y := \frac{(\log N)^{127/2 - \gamma}}{2^{128} L} \gg (\log N)^{127/2 - \gamma - 1/9}. $$
		
		Applying Corollary \ref{cor: log weights} with these parameters yields:
		$$ \sum_{\substack{m_1 \leq W \\ m_1 \in \CM}} \frac{\tau_k(m_1)}{m_1} \ll_k \frac{\sqrt{L}}{(\log N)^{\frac{127}{4} - \gamma}} (\log N)^{\frac{k^2+1}{2}} + Y^{-1/2} (\log N)^{\frac{k^2-1}{2}}. $$
		Because $Y$ is a large power of $\log N$, the $Y^{-1/2}$ term decays faster than the first term and is therefore absorbed, yielding:
		\begin{equation}\label{eq: BDG 10.5}
			\sum_{\substack{m_1 \leq W \\ m_1 \in \CM}} \frac{\tau_k(m_1)}{m_1} \ll_k \frac{\sqrt{L}}{(\log N)^{\frac{127}{4} - \gamma - \frac{k^2+1}{2}}}.
		\end{equation}
		
		Substituting \eqref{eq: BDG 10.5} into \eqref{eq: BDG 10}, we see that 
		\begin{equation}\label{eq: BDG 11}
			\sum_{\substack{m,n \leq N\\ \prod m_i=\prod n_i}}  \frac{\bm{1}_{m_1 \in \CM}}{\max(n_1,m_1) \cdots \max(n_k,m_k)} \ll\frac{\sqrt{L}}{(\log N)^{\frac{127}{4} - \gamma - \frac{3}{2}k^2 +k -\frac{1}{2} -o(1)}}.
		\end{equation}
		
		By combining \eqref{eq: BDG 12} and \eqref{eq: BDG 11} back into \eqref{eq: BDG 8}, the contribution of terms in Case 2 to \eqref{eq: BDG 1} is 
		\begin{equation}\label{eq: BDG 13}
			\ll_k \frac{N^k}{V_{Q_L}(N)^k} \cdot \frac{\sqrt{L}}{(\log N)^{\min\{\gamma/2-5/4 -k^2, \; \frac{127}{4} - \gamma-\frac{3}{2}k^2+k-\frac{1}{2}\}-o(1)}}\sum_{\substack{\bm{r},\bm{s} \\ |r_i|,|s_i| \leq L}}|C(\bm{r,s})|. 
		\end{equation}
		
		By \Cref{lem: variance estimate 1}, and the fact that $|C(\bm{r,s})| \ll |r_1\cdots r_k s_1\cdots s_k|^{-1},$ we see that \eqref{eq: BDG 13} is bounded by
		$$\ll_k \frac{\sqrt{L}(\log L)^{2k}}{(\log N)^{\min\{\gamma/2-5/4 -k^2, \; \frac{127}{4} - \gamma-\frac{3}{2}k^2+k-\frac{1}{2}\}-o(1)}}.$$
		
		We choose $\gamma = \frac{65}{3} - \frac{k^2}{3}+\frac{2k}{3}$ so as to maximize the above minimum. The bound becomes
		$$\ll_k \frac{\sqrt{L}(\log L)^{2k}}{(\log N)^{\frac{115 - 14k^2 + 4k}{12} - o(1)}}.$$
		This completes the estimate for Case 2.
		
		By combining the estimates from both cases, we conclude the general statement of the theorem. To prove the claim following the general statement, simply apply the Theorem with $ k = 3$. 
	\end{proof}
	We are nearly ready to prove `only if' in \Cref{thm:A}. The last remaining piece is an asymptotic calculation of the fourth moment of $S_{Q_L}(N)$.
	\begin{lem}\label{lem:fourth moment}
		Let $Q_L$ be defined as in \eqref{eq: def trig poly}. Let $\theta$ be defined as in \eqref{eq:sound-xu badness approx}. Then, uniformly for all $L \le (\log N)^{1/10}$, one has
		$$\E[|S_{Q_L}(N)|^4] = 2+\frac{2 N^2 \Delta(Q_L)}{V_{Q_L}(N)^2} + O\bigg( \frac{ L (\log L)^4}{(\log N)^{\frac{4}{5}}}  \bigg).$$
	\end{lem}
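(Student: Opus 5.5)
We expand the fourth moment directly using the orthogonality of Steinhaus random multiplicative functions. With $a_n = Q_L(\theta n)$ and $V = V_{Q_L}(N)$, we have
$$\E\big[|S_{Q_L}(N)|^4\big] = \frac{1}{V^2}\sum_{\substack{n_1,\dots,n_4 \le N \\ n_1n_2=n_3n_4}} a_{n_1}a_{n_2}\overline{a_{n_3}a_{n_4}}.$$
We split the solutions of $n_1n_2 = n_3n_4$ into diagonal and off-diagonal parts. The diagonal solutions are those with $\{n_1,n_2\}=\{n_3,n_4\}$ as multisets. They contribute
$$2\Big(\sum_{n\le N}|a_n|^2\Big)^2 - \sum_{n \le N}|a_n|^4 = 2V^2 + O(N),$$
using that $Q_L$ is bounded by $\sum|c_\l| \ll \log L$. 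After dividing by $V^2 \asymp N^2$ (\Cref{lem: variance estimate 1}), this gives $2 + O((\log L)^4/N)$.

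For the off-diagonal solutions we further split according to the largest prime factors. By \Cref{lem:parameterization}, write $n_1=ga$, $n_2=hb$, $n_3=gb$, $n_4=ha$ with $(a,b)=1$, $a\ne b$, $g \ne h$. Each off-diagonal solution then has $n_1\ne n_3$ and $n_2\ne n_4$, or the symmetric version $n_1 \ne n_4$, $n_2 \ne n_3$; up to the swap $n_3\leftrightarrow n_4$ these are the same configuration, contributing a factor of $2$ (overlap handled by inclusion–exclusion). Within the configuration $n_1\ne n_3$, $n_2\ne n_4$, we split further:
\begin{itemize}
\item Terms with $P^+(n_1)=P^+(n_3)$ and $P^+(n_2)=P^+(n_4)$ are exactly the sum evaluated by \Cref{lem:verif cond 2 thm A} with $z=1$. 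This gives $N^2\Delta(Q_L) + O(N^2L(\log N)^{-4/5})$, and after the factor $2$ it produces the main term $2N^2\Delta(Q_L)/V^2$.
\item The remaining terms have $P^+(n_1)\ne P^+(n_3)$ or $P^+(n_2)\ne P^+(n_4)$. These are the ones I would have to show are negligible.
\end{itemize}
In the parameterization, the remaining terms are those where $P^+(ab) > \min\{P^+(g),P^+(h)\}$. I would treat them by rerunning the exponential-sum argument of \Cref{lem:reduction cond 2}, simply without the largest-prime-factor constraints. Expand $Q_L$ in Fourier series and split into the cases $\max(a,b)\le\sqrt N$ or $\max(g,h)\le \sqrt N$. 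In each case:
\begin{itemize}
\item Terms with $\{n_1\l_1,n_2\l_2\}\neq\{n_3\l_3,n_4\l_4\}$ have a nonzero phase. I would bound these using \Cref{exp sum rough} and \Cref{thm: exp sum smooth} (or the smooth-number constraint $P^+(u)\le \min\{P^+(g),P^+(h)\}$ simply dropped) together with the spacing \Cref{lem: badness spacing}. As in \Cref{lem:reduction cond 2}, this gives $O(N^2L(\log L)^4\log\log N/\log N)$.
\item The exactly resonant terms $\{n_1\l_1,n_2\l_2\}=\{n_3\l_3,n_4\l_4\}$ have to be counted directly.
\end{itemize}

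For the resonant terms I would again use the decomposition $S_1+S_2-S_3$ from \Cref{lem:verif cond 2 thm A}. The $S_1$-type terms force $n_1\l_1=n_3\l_3$. For fixed $\l$ these correspond to $n_1=k\l_3'$ and $n_3=k\l_1'$, so $n_1/n_3=\l_3/\l_1$ with bounded height. Since $n_1,n_3$ differ only by factors at most $L$, the largest prime factors disagree only when $P^+(n_1)\le L$. By Rankin's trick, as in \eqref{eq: replace Pn Pnl}, such terms number $O(N^2(\log N)^{-A})$. The $S_2$-type terms are handled the same way, and $S_3 \ll NL^4$.

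This confirms that the whole off-diagonal contribution without the $P^+$ conditions equals the one with them, up to the stated error. The final bound is $O(L(\log L)^4(\log N)^{-4/5})$, after absorbing $\log\log N$ and $(\log L)^4$ as in the lemmas.

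I expect the main obstacle to be bookkeeping rather than new estimates. Specifically: ensuring the two off-diagonal configurations are combined correctly, with the overlap $n_1\ne n_3$, $n_1 \ne n_4$ and the $S_2$-type terms not double-counted, so that the constant in front of $\Delta$ comes out exactly $2$; and checking that the exponential-sum bounds of \Cref{lem:reduction cond 2} go through without the $P^+$ structure. If the latter step fails, I would instead reinstate a decomposition by $P^+$ and bound the mismatched-prime terms via \Cref{lem: smooth divisor sums}.
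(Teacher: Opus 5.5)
Your final constant $2$ comes out right only because two errors cancel, and the step you say you would have to prove is false.

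First, there is no factor $2$ from ``two configurations''. If $n_1n_2=n_3n_4$ and $\{n_1,n_2\}\ne\{n_3,n_4\}$, then $n_1=n_3$ would force $n_2=n_4$, and $n_1=n_4$ would force $n_2=n_3$. So every off-diagonal solution (in the parametrization, $a\ne b$ and $g\ne h$) satisfies $n_1\ne n_3$, $n_2\ne n_4$, $n_1\ne n_4$, $n_2\ne n_3$ simultaneously. The two configurations are the same set, and inclusion--exclusion returns it with multiplicity $1$. Second, the terms with $P^+(n_1)\ne P^+(n_3)$ or $P^+(n_2)\ne P^+(n_4)$ are not negligible: they contain the other copy of $\Delta(Q_L)$.

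Concretely, your ``$S_2$-type'' resonant terms $n_1\ell_1=n_4\ell_4$, $n_2\ell_2=n_3\ell_3$ are $n_1=k|\ell_4'|$, $n_4=k|\ell_1'|$, $n_2=j|\ell_3'|$, $n_3=j|\ell_2'|$. Here $k,j$ are free and the reduced cofactors have size at most $L$. Once $P^+(k),P^+(j)>L$ one has $P^+(n_1)=P^+(n_4)=P^+(k)$ and $P^+(n_2)=P^+(n_3)=P^+(j)$. So these terms sit in your remaining set unless $P^+(k)=P^+(j)$, which is rare. After relabelling $\ell_3\leftrightarrow\ell_4$ they contribute $N^2\Delta(Q_L)+o(N^2)$. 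In \Cref{lem:verif cond 2 thm A} this family was negligible only because the imposed pairing $P^+(n_1)=P^+(n_3)$, $P^+(n_2)=P^+(n_4)$ forced all four largest prime factors to coincide. It therefore cannot be ``handled the same way'' as the $S_1$-type terms. Your concluding claim that removing the $P^+$ conditions does not change the off-diagonal sum is false: the two sums are $2N^2\Delta(Q_L)$ and $N^2\Delta(Q_L)$ respectively.

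There are two ways to repair this. One is to follow the paper: drop the $P^+$ decomposition entirely and count the two aligned families $n_1\ell_1=n_3\ell_3,\,n_2\ell_2=n_4\ell_4$ and $n_1\ell_1=n_4\ell_4,\,n_2\ell_2=n_3\ell_3$ directly. Each gives $N^2\Delta(Q_L)$, with overlap $O(N(\log L)^4)$, and the non-aligned terms are bounded by rerunning \Cref{lem:reduction cond 2} without the $P^+$ constraints. The other is to keep \Cref{lem:verif cond 2 thm A} but apply inclusion--exclusion to both pairings $\mathcal{A}=\{P^+(n_1)=P^+(n_3),\,P^+(n_2)=P^+(n_4)\}$ and $\mathcal{A}'=\{P^+(n_1)=P^+(n_4),\,P^+(n_2)=P^+(n_3)\}$. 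The swap $n_3\leftrightarrow n_4$ maps $\mathcal{A}$ onto $\mathcal{A}'$ and preserves the summand. The intersection $\mathcal{A}\cap\mathcal{A}'$ forces all four $P^+(n_i)$ equal and is negligible by \Cref{lem: verif cond 3}. Only the complement of $\mathcal{A}\cup\mathcal{A}'$ then needs to be shown negligible, and there your Rankin argument does dispose of both resonance types.
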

	\begin{proof}
		First, we expand the fourth moment and separate it into two parts: \begin{equation}\label{4th moment expansion s1 s2}
			\E[|S_{Q_L}(N)|^4] = \frac{1}{V_{Q_L}(N)^2}\sum_{\substack{n_1,\dots n_4 \leq N \\ n_1n_2=n_3n_4}}Q(\theta n_1)Q(\theta n_2)\overline{Q(\theta n_3)Q(\theta n_4)} = \frac{1}{V_{Q_L}(N)^2}(S_1+S_2),
		\end{equation}
		where
		\begin{align*}
			S_1 &:= \sum_{\substack{n_1,\dots ,n_4 \leq N \\ \{n_1,n_2\}=\{n_3,n_4\}}}Q(\theta n_1)Q(\theta n_2)\overline{Q(\theta n_3)Q(\theta n_4)}\\
			S_2 &:= \sum_{\substack{n_1,\dots ,n_4 \leq N \\ n_1n_2=n_3n_4 \\ \{n_1,n_2\}\ne\{n_3,n_4\}}}Q(\theta n_1)Q(\theta n_2)\overline{Q(\theta n_3)Q(\theta n_4)}
		\end{align*}
		Let us simplify $S_1$. We can rewrite the condition $\{n_1,n_2\}=\{n_3,n_4\}$ as $n_1=n_3,n_2=n_4$ or $n_1=n_4,n_2=n_3$. In either case, the factor $Q(\theta n_1)Q(\theta n_2)\overline{Q(\theta n_3)Q(\theta n_4)}$ becomes $|Q(\theta n_1)Q(\theta n_2)|^2$. We can thus rewrite $S_1$ as a part where $n_1=n_3,n_2=n_4$ plus a part where $n_1=n_4,n_2=n_3$, and minus the part where $n_1=n_2=n_3=n_4$ which will otherwise be counted twice:
		\begin{align*}
			S_1 &= 2\sum_{\substack{n_1,n_2 \leq N}}|Q(\theta n_1)Q(\theta n_2)|^2 - \sum_{n \leq N}|Q(\theta n)|^4 \\ 
			&= 2\bigg(\sum_{\substack{n \leq N}}|Q(\theta n)|^2\bigg)^2 - \sum_{n \leq N}|Q(\theta n)|^4
		\end{align*}
		To bound the second sum, we expand $|Q_L(\theta n)|^4$ as a trigonometric polynomial of degree at most $4L$. Because $\hat{g}(k) \ll 1/|k|$, the sum of all coefficients of this polynomial is bounded by $O((\log L)^4)$. By applying our Diophantine condition \eqref{eq:sound-xu badness approx}, the sum over non-zero frequencies is bounded by $O\big(L (\log L)^4 \exp((4L)^{1/127})\big)$. Because we assume $L \le (\log N)^{1/10}$, this off-diagonal error is bounded by $N^{o(1)}$, which is $o(N)$. The main diagonal term is $N \int |Q_L|^4 \ll N (\log L)^2$. Thus, the second sum is bounded by $O(N(\log L)^2)$. We thus have
		$$S_1 = 2V_{Q_L}(N)^2 +O(N (\log L)^2).$$
		
		We must now deal with the sum $S_2$. By using the definition of $Q_L$, we can write
		$$S_2 = \sum_{\substack{\l_1,\dots ,\l_4 \\ |\l_i| \leq L}}C(\bm{\l})\sum_{\substack{n_1,\dots n_4 \leq N \\ n_1n_2=n_3n_4 \\ \{n_1,n_2\}\ne\{n_3,n_4\}}}e\big[\theta (n_1\l_1+n_2\l_2-n_3\l_3-n_4\l_4)\big].$$
		
		We partition $S_2$ into two contributions: an aligned contribution where the frequencies perfectly cancel the phases (i.e., $\{n_1\l_1,n_2\l_2\}=\{n_3\l_3,n_4\l_4\}$), and the complementary non-aligned contribution where $\{n_1\l_1,n_2\l_2\}\ne\{n_3\l_3,n_4\l_4\}$. 
		
		For the aligned contribution, the set equality holds if and only if either (I) $n_1\l_1=n_3\l_3$ and $n_2\l_2=n_4\l_4$, or (II) $n_1\l_1=n_4\l_4$ and $n_2\l_2=n_3\l_3$. We analyze case (I); case (II) is symmetrically identical. In case (I), combining the phase alignment with the multiplicative constraint yields 
		$$ (n_1 n_2) \l_1 \l_2 = (n_3 n_4) \l_3 \l_4. $$
		Since $n_1n_2 = n_3n_4 \ge 1$, this forces $\l_1\l_2 = \l_3\l_4$. Furthermore, the overarching sum constraint $\{n_1, n_2\} \ne \{n_3, n_4\}$ dictates that we do not match on the straight diagonal ($n_1=n_3$ and $n_2=n_4$). If $n_1 = n_3$, then the relation $n_1\l_1 = n_3\l_3$ immediately implies $\l_1 = \l_3$. Thus, to exclude the straight diagonal, we enforce $\l_1 \ne \l_3$ and $\l_2 \ne \l_4$. 
		
		The intersection of case (I) and case (II) occurs when $n_1=n_4$ and $n_2=n_3$, which contributes a negligible $O(N(\log L)^4)$ overlapping terms. Stripping this overlap, the number of valid integer tuples in $[1, N]^4$ satisfying the conditions of case (I) relies only on the linear equations $n_1\l_1=n_3\l_3$ and $n_2\l_2=n_4\l_4$. As derived earlier in \eqref{eq: proof lem 4.3 2} (where the $P^+$ sieve conditions are notably absent), the number of such solutions is precisely 
		$$ \sum_{\substack{n_1,\dots,n_4 \leq N \\ n_1\l_1 = n_3 \l_3 \\ n_2\l_2 = n_4\l_4}} 1 = \bm{1}_{\l_1\l_3 > 0, \l_2\l_4>0} \frac{N^2}{H(\l_1/\l_3)H(\l_2/\l_4)} + O(N). $$ 
		Summing this over the allowed frequencies $\bm{\l}$ exactly recovers $N^2 \Delta(Q_L) + O(N(\log L)^4)$. Adding the identical contribution from case (II), the total phase-aligned contribution to $S_2$ is exactly $2N^2 \Delta(Q_L) + O(N(\log L)^4)$.
		
		For the non-aligned contribution, we use the parameterization of \Cref{lem:parameterization} to write $n_1 = ga$, $n_2 = hb$, $n_3 = gb$, and $n_4 = ha$, where $(a,b)=1$ and $a \ne b$. This transforms the non-aligned sum into:
		\begin{equation}\label{eq: non-aligned T3 sum}
			\sum_{\substack{\l_1,\dots,\l_4 \\ |\l_i| \leq L}} C(\bm{\l}) \sum_{\substack{\max(a,b) \times \max(g,h) \leq N \\ a \ne b, \; (a,b) = 1 \\ \{ga\l_1,hb\l_2\} \ne \{gb\l_3,ha\l_4\}}} e\big[\theta (ga\l_1+hb\l_2-gb\l_3-ha\l_4)\big].
		\end{equation}
		This sum is structurally identical to the one bounded in the proof of \Cref{lem:reduction cond 2}, with the only difference being that the conditions $P^+(ab) \leq \min\{P^+(g),P^+(h)\}$ are absent. Omitting these sieve constraints only simplifies the argument, and by repeating the same proof as in that lemma, we find that the non-aligned case contributes an error of at most $O\big(N^2 L(\log L)^4 (\log N)^{-4/5}\big)$.
		
		Combining our estimates for the aligned and non-aligned portions of $S_2$ back into \eqref{4th moment expansion s1 s2}, and using the fact that $V_{Q_L}(N) \asymp N$, we conclude that
		$$\E[|S_{Q_L}(N)|^4] = 2+\frac{2 N^2 \Delta(Q_L)}{V_{Q_L}(N)^2} + O\bigg(\frac{(\log L)^4}{N} + \frac{ L (\log L)^4}{(\log N)^{\frac{4}{5}}}  \bigg).$$
		We note that the second error term is asymptotically larger than the first, so the $1/N$ error term may be ignored. This concludes the proof of the lemma.
	\end{proof}
	
	\begin{proof}[Proof of `only if' in \Cref{thm:A}]
		Assume for the sake of contradiction that $S_{g}(N)$ converges in distribution to a standard complex normal random variable $Z \sim \CC\CN(0,1)$ as $N \to \infty$, but that $\Delta(g) \ne 0$. 
		
		Let $L_N$ be a sequence growing to infinity sufficiently slowly such that $L_N \le (\log N)^{1/100}$ and $\E[|S_g(N) - S_{Q_{L_N}}(N)|^2] \to 0$ as $N \to \infty$. The existence of such a slowly growing sequence is guaranteed since $\|g - Q_L\|_2 \to 0$ as $L \to \infty$. Let $X_N := S_{Q_{L_N}}(N)$. Because $S_g(N) \to^d Z$ and the $L^2$-distance between $S_g(N)$ and $X_N$ vanishes, Slutsky's theorem implies that we also have $X_N \xrightarrow{d} Z$.
		
		By \Cref{thm: bounded moments}, since $L_N \le (\log N)^{1/100}$, the sixth moment of $X_N$ is uniformly bounded:
		\begin{equation}\label{eq: bound 6}
			\sup_{N \geq 1} \E\big[|X_N|^6\big] \le C,
		\end{equation}
		for some constant $C > 0$. We now compute the limit of the fourth moment $\E[|X_N|^4]$ in two different ways to extract a contradiction.
		
		On one hand, by \Cref{lem:fourth moment}, we have
		\begin{equation*}
			\E\big[|X_N|^4\big] = 2 + \frac{2 N^2 \Delta(Q_{L_N})}{V_{Q_{L_N}}(N)^2} + O\bigg( \frac{ L_N(\log L_N)^4}{(\log N)^{4/5}} \bigg).
		\end{equation*}
		Since $L_N \to \infty$ as $N \to \infty$, we have $\Delta(Q_{L_N}) \to \Delta(g)$ and $\|Q_{L_N}\|_2 \to \|g\|_2$. Thus, taking $N \to \infty$, the fourth moment converges to:
		\begin{equation}\label{eq: limit 4th 1}
			\lim_{N \to \infty} \E\big[|X_N|^4\big] = 2 + \frac{2 \Delta(g)}{\|g\|_2^4}.
		\end{equation}
		
		On the other hand, we can evaluate the limit using our weak convergence $X_N \to^d Z$ coupled with the uniform boundedness of the sixth moment. Fix a large real number $M > 0$. We partition the expected value as:
		\begin{equation}\label{eq: split M}
			\E\big[|X_N|^4\big] = \E\big[|X_N|^4 \bm{1}_{|X_N| \le M}\big] + \E\big[|X_N|^4 \bm{1}_{|X_N| > M}\big].
		\end{equation}
		For the tail term, we use the uniform bound \eqref{eq: bound 6} to deduce:
		\begin{equation*}
			\E\big[|X_N|^4 \bm{1}_{|X_N| > M}\big] \le \E\bigg[|X_N|^4 \cdot \frac{|X_N|^2}{M^2} \bm{1}_{|X_N| > M}\bigg] \le \frac{1}{M^2}\E\big[|X_N|^6\big] \le \frac{C}{M^2}.
		\end{equation*}
		For the bounded term, since $x \mapsto |x|^4 \bm{1}_{|x| \le M}$ is continuous almost everywhere with respect to the distribution of $Z$ (which is absolutely continuous), the weak convergence $X_N \to^d Z$ guarantees that:
		\begin{equation*}
			\lim_{N \to \infty} \E\big[|X_N|^4 \bm{1}_{|X_N| \le M}\big] = \E\big[|Z|^4 \bm{1}_{|Z| \le M}\big].
		\end{equation*}
		Taking the $\limsup$ and $\liminf$ of \eqref{eq: split M} as $N \to \infty$ thus yields:
		\begin{align*}
			\limsup_{N \to \infty} \E\big[|X_N|^4\big] &\le \E\big[|Z|^4 \bm{1}_{|Z| \le M}\big] + \frac{C}{M^2} \le \E\big[|Z|^4\big] + \frac{C}{M^2} = 2 + \frac{C}{M^2}, \\
			\liminf_{N \to \infty} \E\big[|X_N|^4\big] &\ge \E\big[|Z|^4 \bm{1}_{|Z| \le M}\big] \ge 2 - \frac{C}{M^2}.
		\end{align*}
		Because these bounds hold for any arbitrarily large $M > 0$, letting $M \to \infty$ forces the limit to exist and equal exactly $2$. 
		
		Comparing this to \eqref{eq: limit 4th 1}, we must have:
		$$2 + \frac{2 \Delta(g)}{\|g\|_2^4} = 2 \implies \Delta(g) = 0,$$
		which contradicts our assumption. We conclude that if $S_g(N)$ converges to a complex Gaussian, we must have $\Delta(g) = 0$.
	\end{proof}

	%------
	% Insert the bibliography.
	%------

\end{document}